\date{\today}
\numberwithin{equation}{section}
\newcommand{\dv}{\mathrm{div}\,}
\newcommand{\cl}{\mathrm{curl}\,}
\newtheorem{Theorem}{Theorem}[section]
\newtheorem{Lemma}{Lemma}[section]
\newtheorem{Proposition}{Proposition}[section]
\theoremstyle{definition}
\theoremstyle{remark}
\newtheorem{Remark}{Remark}[section]
\begin{document}
\title[a kinetic model]
 {Global existence for the relativistic Vlasov-Poisson system in a two-dimensional bounded domain} 

  \author[Y.-M. Mu ]{Yanmin Mu}
\address{Department of Applied Mathematics, Nanjing University of Finance \& Economics, Nanjing
 210046, China}
 \email{9120161021@nufe.edu.cn}

 \author[D. Wang]{Dehua Wang}
\address{Department of Mathematics,   University of Pittsburgh,  Pittsburgh, PA 15260, USA}
 \email{dhwang@pitt.edu}

\date{}

\begin{abstract}
In this paper, we prove the global existence of solutions to the relativistic Vlasov-Poisson system for general initial data in convex bounded domains of two space dimensions, assuming the specular reflection boundary conditions for the distribution density. The boundary conditions for the electric potential are considered in two cases: Neumann boundary conditions and homogeneous Dirichlet boundary conditions. The core ideas involve constructing suitable velocity lemmas and applying geometric techniques. In the two-dimensional case, it is crucial to select the arc length as the parameter of the curve and to further combine this with the Frenet-Serret formulas, enabling us to effectively describe the distribution density equation near the boundary and thus establishing a vital connection in the geometric representation. 
 \end{abstract}

\keywords{Relativistic Vlasov-Poisson system, Global solution, Convex bounded domain,  General initial data, Velocity lemma.}
\subjclass[2000]{35A05, 35B65, 78A35}

\maketitle




\section{Introduction}

The relativistic Vlasov-Poisson system describes the collective dynamics of a collisionless plasma, where particles travel at nearly the speed of light and interact through their self-generated electric fields. The system consists of a relativistic Vlasov equation for the particle distribution function coupled with the Poisson equation for the electric potential.
In this paper, we are concerned with the global solution to the relativistic Vlasov-Poisson system (RVP) with general initial data in a convex, bounded domain $\Omega$ of two space dimensions:
 \begin{align}
&\partial_t f+\hat{v}\cdot\nabla_{x}f+\nabla_{x}\varphi\cdot\nabla_{v}f=0, 
\label{v1.5}\\
&\Delta\varphi=\rho, 
\label{v1.6}
\end{align}
where $x\in \Omega\subset \mathbb{R}^{2}$, $t>0$; $f=f(t,x,v)\geq 0$ denots  the  distribution density of particles at position $x$, time $t$, with momentum $v\in\mathbb{R}^{2}$;
$\hat{v}\in\mathbb{R}^{2}$ is the velocity that relates to the momentum $v$ according to Einstein's formula:
$$\hat{v}=\frac{v}{\sqrt{1+|v|^{2}}},$$
 $\varphi(t,x)$ is the electric potential, $\rho$  is the   charge  density   given by
 $$ \rho=\rho(t,x)= \int_{\mathbb{R}^{2}}f\, {\rm d}v,$$
 and the domain $\Omega$ is a  convex  bounded domain with $C^{5}$ boundary in $\mathbb{R}^{2}$. 
 We refer readers to \cite{KF,W} for more background on the relativistic Vlasov-Poisson system.

In the non-relativistic case, one has the following Vlasov-Poisson system (VP):
\begin{align}
&\partial_t f+v\cdot\nabla_{x}f+\nabla_{x}\varphi\cdot\nabla_{v}f=0,\label{v1.1}\\
&\Delta\varphi=\rho.\label{v1.2}
\end{align}
There have been many mathematical results for the non-relativistic Vlasov-Poisson system \eqref{v1.1}-\eqref{v1.2}, and we refer to Arsen'ev \cite{A}, Batt \cite{B}, Horst \cite{H}, Bardos-Degond \cite{BD}, Pfaffelmoser \cite{P}, Lions-Perthame \cite{LP} and the references therein for the global existence of solutions and related results.
It should be emphasized that  Pfaffelmoser \cite{P} and Lions-Perthame \cite{LP} demonstrated the global existence of general initial data by differential approaches. Nonetheless, it is well known that the existence of global solutions for large data is a challenging problem for the relativistic Vlasov-Poisson system. 
%
As indicated in \cite{GS}, in general, the well-known issues that emerge in the classical VP \eqref{v1.1}-\eqref{v1.2} do not arise here since $|\frac{d}{ds}x(s)|=|\hat{v}(s)|\leq 1$  along the characteristics for RVP \eqref{v1.5}-\eqref{v1.6}, which seems better than VP. However, from the energy equality of RVP in $\mathbb{R}^{3}$, 
$$ 
 \int\int_{\mathbb{R}^{6}}\sqrt{1+|v|^{2}}f(t,x,v){\rm d}x{\rm d}v+\frac{1}{2}\int_{\mathbb{R}^{3}}|\nabla \varphi|^{2}{\rm d}x=\text{constant}, \nonumber
$$
it follows that $\rho(t)\in L^{\frac{4}{3}}(\mathbb{R}^{3})$  for RVP,  while $\rho(t)\in L^{\frac{5}{3}}(\mathbb{R}^{3})$ for VP.
The primary challenge facing RVP at the moment is the loss of regularity for $\rho(t)$, which means that for general initial data, the global existence of classical  solutions  for RVP in $\mathbb{R}^{3}$  is still unknown.

For the Cauchy problem of the relativistic Vlasov-Poisson system  \eqref{v1.5}-\eqref{v1.6},  Glassey-Schaeffer\cite{GS,GS1}, and later Kiessling and Tahvildar-Zadeh \cite{KT2008} and Wang \cite{WangXC2023} established the spherically symmetric and  cylindrically symmetric solutions  in $\mathbb{R}^{3}$,
Rammaha\cite{R} proved the global existence for general initial data in $\mathbb{R}^{2}$, Had\v{z}i\'{c}-Rein \cite{H} obtained global existence and nonlinear stability.
For more results on related problems, we refer readers to \cite{GS2,GS3,GS4,GS5,GS6,GS7,GS8,K,WangXC2023} and their references. 
%
Nevertheless, there are no mathematical studies of well-posedness for the RVP solutions in the case of domains with boundaries. 
The motivation of this paper is to provide new insights into boundary-value problems in kinetic equations and to understand how boundaries influence the dynamics of RVP.


 For the distribution density, we consider the following initial and boundary conditions under which the distribution density exhibits specular reflection on the boundary:
   \begin{align}
&f(0,x,v)=f_{0}(x,v),\quad x\in \Omega,\; v\in \mathbb{R}^{2}, \label{v1.8}\\
&f(t,x,v)=f(t,x,v^{*}),\quad x\in \partial\Omega, \; v\in \mathbb{R}^{2},\; t>0, \label{v1.9}
\end{align}
 satisfying 
 \begin{align}
 &f_{0}(x,v)\geq 0,  \label{v1.10}\\
 &v^{*}=v-2\big(n_{x}\cdot v\big)n_{x}, \quad (x,v)\in \partial\Omega \times \mathbb{R}^{2}, \label{v1.11}
 \end{align}
 where $n_{x}$ denotes the outer normal vector at $x\in\partial\Omega$.
Meanwhile, regarding electric potential, we analyze two different types of boundary conditions. 
The first one is the Neumann boundary condition,
  \begin{align}
   &\frac{\partial \varphi}{\partial n_{x}}=h(x),\,\,\, x\in \partial \Omega,\,t>0,\label{v1.7}
   \end{align}
where the function $h$  is positive and satisfies the following compatibility condition:
\begin{align}
\int_{\Omega}f_{0}(x,v){\rm d}x{\rm d}v=\int_{\partial\Omega}h(x){\rm d}l, \label{v1.12}
\end{align}
and the second is the homogeneous Dirichlet boundary condition,
 \begin{align}
   & \varphi (t,x)=0,\,\,\, x\in \partial \Omega,\,t>0.\label{vD1.7}
 \end{align}
   
The well-posedness in bounded domains for VP has been extensively studied; see \cite{C,D,G1,G2,H1,HJV,HV,HV1,LM} and the references therein. 
Regarding the theory of well-posedness in bounded domains, many additional issues have emerged compared to the Cauchy problem of the Vlasov-Poisson system. Tracking the evolution of the characteristic curves associated with \eqref{v1.5}-\eqref{v1.6} is one of the challenges that must be addressed, for this purpose
in \cite{G2} Guo introduced the following ``singular sets",
\begin{align}
 \Gamma=\big\{(x,v)\in\Omega\times\mathbb{R}^{3}, x\in \partial\Omega,v\in T_{x}\partial\Omega\big\},   \label{v1.13}
\end{align}
 where $T_{x}\partial\Omega\subset \mathbb{R}^{3}$ is the tangent plane to $\partial\Omega$ at the point $x$.


\subsection{ A more convenient coordinate system near $\partial\Omega\times \mathbb{R}^{2}$}
 Assuming that $\partial\Omega$ is a smooth curve in $\mathbb{R}^{2}$ and its parametric equation can be expressed as $r(l)=(x_{1}(l),x_{2}(l))$, where $l$ represents the arc length parameter.
 At the point $r(l)$, we shall denote the outer normal to $\partial\Omega$  by $n(l)$.

The implicit function theorem shows that for $\delta > 0$ sufficiently small we can
parameterize uniquely the set of points  $x\in \partial\Omega + B_{\delta}(0) \subset \mathbb{R}^{2}  $ by the unique
values $(l, x_{\bot})$ satisfying the equation,
\begin{align}
x = r(l)-x_{\bot}n (l).  \label{v2.1}
\end{align}
Set
$$U(l)=\frac{dr(l)}{dl},\quad W(l)=\frac{dU(l)}{dl}; \quad \text{then}\,\;  n(l)=-\frac{W(l)}{|W(l)|}.$$
By the Frenet-Serret formulas, we can obtain a set of unit orthogonal local coordinate frames  $\big(U(l),n(l)\big)$, and then represent any vector $v\in  \mathbb{R}^{2}$ as,
\begin{align}
v = v_{\|}(l)-v_{\bot}n (l), \label{v2.2}
\end{align}
where $v_{\|}(l)=\omega U(l)\in T_{r(l)}(\partial\Omega), v_{\bot}\in \mathbb{R}.$

For the set of points  in the phase space $\Omega\times \mathbb{R}^{2}$ that are
close to the boundary $\partial\Omega\times \mathbb{R}^{2}$, we can denote $f(t,x,v)=f(t,l,x_{\bot},\omega,v_{\bot})$ by the system of coordinate $(l,x_{\bot},\omega,v_{\bot})$
and the equation of $f(t,x,v)$ will satisfy the following new form,
\begin{align}
f_{t}+\frac{1}{\sqrt{1+|v|^{2}}}\frac{\omega}{1-kx_{\bot}}\frac{\partial f}{\partial l}+\frac{v_{\bot}}{\sqrt{1+|v|^{2}}}\frac{\partial f}{\partial x_{\bot}}
+\sigma \frac{\partial f}{\partial \omega}+F\frac{\partial f}{\partial v_{\bot}}=0,\label{v2.3}
\end{align}
where $k>0$ is the curvature of boundary curves and
\begin{align}
&E=\nabla \varphi=E_{l}U(l)-E_{\bot}n(l), \quad E_{\bot}=-h, \label{v2.4}\\
&\sigma=E_{l}+\frac{v_{\bot}}{\sqrt{1+|v|^{2}}}\frac{k\omega}{1-kx_{\bot}},\quad F=E_{\bot}-\frac{1}{\sqrt{1+|v|^{2}}}\frac{k\omega^{2}}{1-kx_{\bot}}.\label{v2.5}
\end{align}
\begin{Remark}
The proof of \eqref{v2.3} is a standard change of variables  using the classical Frenet-Serret formulas; we will not provide details here.
\end{Remark}
\begin{Remark}
Note $1-kx_{\bot}>0$ for sufficiently small $x_{\bot}$. Since the domain $\Omega $ is convex, and due to $h>0$ we have
$F < 0$.
\end{Remark}

\subsection{Compatibility conditions and assumptions for the initial and boundary data}

In the process of establishing a classical solution, it is necessary that the initial data $f_{0}(x,v)$ satisfies the following compatibility conditions  at the reflection points of
$\partial\Omega\times \mathbb{R}^{2}$ (cf.  \cite{G2,H1}),
\begin{align}
&f_{0}(x,v)=f_{0}(x,v^{*}),\label{v2.6}\\
&v_{\bot}\Big[\nabla_{x}^{\bot}f_{0}(x,v^{*})+\nabla_{x}^{\bot}f_{0}(x,v)\Big]+2E_{\bot}(0,x)\nabla_{v}^{\bot}f_{0}(x,v)=0,\label{v2.7}
\end{align}
where $E_{\bot}(0, x)$ is the decomposition of the field $E (0, x)$ given by \eqref{v2.4} and
$\nabla_{x}^{\bot},\nabla_{v}^{\bot}$ are the normal components to $\partial\Omega$ of the gradients $\nabla_x,\nabla_v$ respectively.

We assume that the initial data $f_{0}(x,v)$ is constant near the singular set in order to establish a classical solution for general initial data (cf. \cite{HV}), as the characteristic curve continually   hits the boundary near the singular set, that is, 
the initial data $f_{0}(x,v)$  satisfies the following flatness condition near the singular set   
$\Gamma=\big\{(x,v)\in\Omega\times\mathbb{R}^{2}, x\in \partial\Omega,v\in T_{x}\partial\Omega\big\}$,     
\begin{align}
  f_{0}\in C^{1,\mu},\quad f_{0}(x,v)=\text{constant,\,\,  dist}((x,v), \Gamma)\leq \delta_{0},\label{v2.8}
\end{align}
for some $\delta_{0}>0$ small.
  
We note that if the function $h(t,x)$ depends on time and that $\frac{\partial h}{\partial t}$ is smooth enough, the    main result below still holds.
For convenience, we assume that $\frac{\partial h}{\partial t}=0$, that is,  $h=h(x)$.


\subsection{Main results}

We define some functional spaces as follows.  

For $\mu\in(0, 1), \nabla=(\nabla_{x},\nabla_{v})$,
  \begin{align}
  &\|f\|_{C^{1,\mu}(\bar{\Omega}\times \mathbb{R}^{2})}=\sup_{(x,v),(x',v')\in\bar{\Omega}\times \mathbb{R}^{2}}\Big(\frac{|\nabla f(x,v)-\nabla f(x',v')|}{|x-x'|^{\mu}+|v-v'|^{\mu}}\Big)
 +\|f\|_{L^{\infty}(\bar{\Omega}\times \mathbb{R}^{2})},\nonumber\\
  & \|f\|_{C^{1;1,\mu}_{t;x}([0,T]\times\bar{\Omega} )}=\sup_{x,x'\in\bar{\Omega},t,t'\in[0,T]}\frac{|\nabla_{x} f(t,x)-\nabla_{x} f(t',x')|}{|x-x'|^{\mu}}
  +\|f\|_{C([0,T]\times\bar{\Omega})} 
  +\|f_{t}\|_{C([0,T]\times\bar{\Omega})},\nonumber\\
  & \|f\|_{C^{1;1,\mu}_{t;(x,v)}([0,T]\times\Omega\times\mathbb{R}^{2})}\nonumber\\
  &=\sup_{x,x'\in\bar{\Omega},v,v'\in\mathbb{R}^{2}, t,t'\in[0,T]}
  \frac{|\nabla_{x} f(t,x,v)-\nabla_{x} f(t',x',v')|+|\nabla_{v} f(t,x,v)-\nabla_{v} f(t',x',v')|}{|x-x'|^{\mu}+|v-v'|^{\mu}}
  \nonumber\\
  &\qquad+\|f\|_{C([0,T]\times\bar{\Omega}\times\mathbb{R}^{2})}+\|f_{t}\|_{C([0,T]\times\bar{\Omega}\times\mathbb{R}^{2})},\nonumber
  \end{align}
 where $C([0,T]\times\bar{\Omega}),C([0,T]\times\bar{\Omega}\times\mathbb{R}^{2})$ are the spaces of continous functions bounded in the uniform norm, and
\begin{align}
 C&^{1,\mu}_{0}\Big(\bar{\Omega}\times \mathbb{R}^{2}\Big)\nonumber\\
 &=\Big\{f\in C^{1,\mu}\big(\bar{\Omega}\times \mathbb{R}^{2}\big):f \,\,\text{is \,compactly \,supported,}\;  \|f\|_{C^{1,\mu}(\bar{\Omega}\times \mathbb{R}^{2})}<\infty \Big\}.\nonumber
 \end{align}


We state  our main results of this paper as follows, with respect to the two different boundary conditions of the electric potential.
\begin{Theorem}\label{T1}
 Let $f_{0} \in C^{1,\mu}_{0}\big(\bar{\Omega}\times\mathbb{R}^{2}\big), f_{0}\geq 0$ for some $0 <\mu< 1$, satisfying
\eqref{v2.8}.  Suppose that $h \in C^{2,\mu} (\partial\Omega)$ satisfies \eqref{v1.12} and $h > 0$. Then there exists
a unique solution $f \in C^{1;1,\lambda}_{t;(x,v)} \big((0,\infty)\times \bar{\Omega}\times \mathbb{R}^{2}\big),
\varphi\in  C^{1;3,\lambda}_{t;x}\big((0,\infty)\times \bar{\Omega}\big) $
for
some $0 < \lambda < \mu$, of the relativistic Vlasov-Poisson system \eqref{v1.5}-\eqref{v1.6} with compact support
in $x$ and $v$, where the  initial boundary conditions of $(f,\varphi)$ satisfy \eqref{v1.8}-\eqref{v1.10} and \eqref{v1.7}  respectively.
\end{Theorem}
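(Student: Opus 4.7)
The plan is to prove Theorem~\ref{T1} by constructing a local-in-time classical solution via a Picard iteration on characteristics in the Frenet--Serret coordinates introduced above, and then upgrading it to a global solution by a priori bounds that rule out blow-up of $\rho$ and $\nabla\varphi$ on any finite time interval. The convexity of $\Omega$, the sign $F<0$ noted after \eqref{v2.5}, and the flatness \eqref{v2.8} of $f_{0}$ near the singular set $\Gamma$ are the three structural ingredients that make the iteration close.

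For local existence, I would fix $T>0$ small and iterate: given a regular potential $\varphi^{n}$, integrate the characteristic system associated with \eqref{v1.5} under specular reflection \eqref{v1.9}, define $f^{n+1}$ by pullback along these characteristics, and recover $\varphi^{n+1}$ by solving the Neumann problem $\Delta\varphi^{n+1}=\int f^{n+1}\,dv$ with boundary data $h$ (the compatibility \eqref{v1.12} being preserved by specular reflection). The condition \eqref{v2.7} is exactly what guarantees that $f^{n+1}$ stays in $C^{1,\mu}$ across the reflection surfaces, since it matches normal derivatives under $v\mapsto v^{*}$. A contraction in a norm of $C^{1;1,\mu}_{t;(x,v)}\times C^{1;1,\mu}_{t;x}$ type then produces a solution on some maximal interval $[0,T_{\ast})$.

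The decisive technical tool is a velocity lemma in the coordinates $(l,x_{\bot},\omega,v_{\bot})$ of \eqref{v2.1}--\eqref{v2.2}. Along a characteristic of \eqref{v2.3} staying in the boundary layer, I would study a weighted quantity of the form
\[
\alpha(t)=v_{\bot}^{2}(t)+2\int_{0}^{x_{\bot}(t)}\bigl(-F\bigr)\bigl(t,l(t),s,\omega(t),v_{\bot}(t)\bigr)\,ds,
\]
and use the explicit expression of $F$ in \eqref{v2.5}, together with $h>0$ and the convexity sign, to derive a differential inequality of the form $|\dot\alpha|\leq C(1+|v|)\alpha$. Gronwall's lemma then gives $\alpha(t)\geq c\,e^{-Ct}\alpha(0)$, which forces a quantitative separation between the characteristic and the singular set $\Gamma$. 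Combined with the flatness \eqref{v2.8}, this bounds the number of specular bounces on any finite time interval and, together with the matching of $\nabla f$ across reflections furnished by \eqref{v2.7}, propagates $C^{1,\mu}$ regularity of $f$ globally in time.

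For the global extension, conservation of mass and total energy, together with the specular reflection boundary condition, give uniform control of $\rho$ in $L^{\infty}_{t}L^{p}_{x}$ for some $p>1$; an adaptation of Rammaha's 2D moment estimate, now carried out in the bounded domain via the velocity lemma, then yields a polynomial-in-$t$ bound on the $v$-support of $f$ and hence $\rho\in L^{\infty}_{t,x}$. Schauder estimates for the Neumann problem \eqref{v1.6}, \eqref{v1.7} with $h\in C^{2,\mu}$ upgrade $\varphi$ to the regularity stated in Theorem~\ref{T1}, and reopening the Picard iteration extends the local solution to $[0,\infty)$. I expect the \emph{main obstacle} to be the velocity lemma itself: the relativistic factor $\sqrt{1+|v|^{2}}$ appears in denominators in \eqref{v2.3} while the momentum $|v|$ is a priori unbounded, so one must verify that the curvature contribution $\tfrac{k\omega^{2}}{1-kx_{\bot}}$ inside $F$ dominates in the correct direction uniformly in $v$ right down to $x_{\bot}=0$. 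It is precisely the 2D arc-length parametrization together with the Frenet--Serret identities that makes this computation tractable and, ultimately, the geometric representation near $\partial\Omega$ exploitable.
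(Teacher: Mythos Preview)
Your overall architecture---Picard iteration on the linear problem, a velocity lemma in the Frenet--Serret coordinates, a bound on the number of specular bounces, and then a global bound on the $v$-support to continue the solution---is exactly the scheme the paper follows. Two points of your implementation, however, diverge from the paper in ways worth noting.

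First, the paper's velocity lemma uses a simpler weight than yours: rather than an integral in $x_{\bot}$, it takes
\[
\alpha=\tfrac{v_{\bot}^{2}}{2}-L(t,l,0,\omega,v_{\bot})\,x_{\bot},\qquad L(t,0)=-\sqrt{1+|v|^{2}}\,h(x)-k\omega^{2},
\]
which is linear in $x_{\bot}$. The point is that $L(t,0)\le -\epsilon_{0}<0$ by convexity and $h>0$, so $\alpha\asymp v_{\bot}^{2}+x_{\bot}$, and along characteristics $|d\alpha/dt|\le C\alpha$ after using only the $C^{1/2}$ seminorm of $E$. This is cleaner than your integral form and, more importantly, shows that the relativistic factor is \emph{not} an obstacle here: $\sqrt{1+|v|^{2}}$ enters as a favorable multiplicative coefficient on $h(x)$, not as a small denominator. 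The paper also introduces a second coordinate $\beta$ (essentially a bounce counter) and shows $|d\beta/dt|\le C/\sqrt{\alpha}$, which is how the finiteness of reflections away from the set $\{\alpha\le C\delta_{0}\}$ is obtained; your proposal asserts this bound but does not supply the mechanism.

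Second, and this is the main overcomplication, the global-in-time step needs neither energy conservation nor a Rammaha-type moment argument. In a bounded two-dimensional domain the Green's function for the Neumann problem obeys $|\nabla_{x}G(x,y)|\le C|x-y|^{-1}$, so for any $r>0$
\[
|E(t,x)|\le C\|\rho(t)\|_{L^{\infty}}\,r+\frac{C}{r}\,\|\rho(t)\|_{L^{1}}+\|h\|_{C^{1,\mu}}\le C\,Q(t)\,\|f_{0}\|_{L^{\infty}}^{1/2}\|f_{0}\|_{L^{1}}^{1/2}+C,
\]
using $\|\rho(t)\|_{L^{\infty}}\le\|f_{0}\|_{L^{\infty}}Q(t)^{2}$ and mass conservation. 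Feeding this into $\dot V=E$ (and the analogous estimate in the boundary layer via \eqref{v2.3}) yields $Q(t)\le C+C\int_{0}^{t}Q(s)\,ds$ and Gronwall closes. So the ``decisive technical tool'' is correctly identified as the velocity lemma, but the obstacle you flag is not where the difficulty lies, and the global step is a straightforward Gronwall rather than a moment estimate.
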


\begin{Theorem}\label{T02}
 Let $f_{0} \in C^{1,\mu}_{0}\big(\bar{\Omega}\times\mathbb{R}^{2}\big), f_{0}\geq 0$ for some $0 <\mu< 1$, satisfying
\eqref{v2.8}. Then there exists
a unique solution $f \in C^{1;1,\lambda}_{t;(x,v)} \big((0,\infty)\times \bar{\Omega}\times \mathbb{R}^{2}\big),
\varphi\in  C^{1;3,\lambda}_{t;x}\big((0,\infty)\times \bar{\Omega}\big) $
for
some $0 < \lambda < \mu$, of the relativistic Vlasov-Poisson system \eqref{v1.5}-\eqref{v1.6} with compact support
in $x$ and $v$, with the  initial boundary conditions of $(f,\varphi)$ satisfying \eqref{v1.8}-\eqref{v1.10} and \eqref{vD1.7}  respectively.
\end{Theorem}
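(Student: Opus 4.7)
The plan is to follow the overall architecture used for Theorem \ref{T1}: build a local classical solution by a Picard-type iteration in the $C^{1,\lambda}$-class, control the characteristic flow in a neighborhood of $\partial\Omega\times\mathbb{R}^{2}$ by a velocity lemma, deduce global a priori bounds on the momentum support of $f$ and on $\|\rho\|_{L^{\infty}}$, and finally close the regularity through Schauder estimates for the Poisson equation. The only step that genuinely has to be rebuilt is the boundary analysis, because the sign $E_{\bot}=-h<0$ that was imposed in the Neumann setting must now be recovered intrinsically from the Dirichlet condition $\varphi|_{\partial\Omega}=0$.

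The central observation is the following. Since $f\geq 0$ we have $\rho\geq 0$, so $\Delta\varphi=\rho\geq 0$ with $\varphi|_{\partial\Omega}=0$ forces $\varphi\leq 0$ in $\bar{\Omega}$ by the maximum principle. Hopf's lemma then yields $\partial_{n_{x}}\varphi\geq 0$ on $\partial\Omega$, and the decomposition in \eqref{v2.4} gives $E_{\bot}=-\partial_{n_{x}}\varphi\leq 0$ on $\partial\Omega$. Combined with the strict positivity of the curvature $k$ that comes from the convexity of $\Omega$, this produces
\begin{equation*}
F \;=\; E_{\bot} - \frac{1}{\sqrt{1+|v|^{2}}}\,\frac{k\omega^{2}}{1-kx_{\bot}} \;\leq\; 0
\end{equation*}
in a tubular neighbourhood of $\partial\Omega$, with strict inequality as soon as $\omega\neq 0$. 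This is exactly the geometric ingredient powering the velocity lemma of Theorem \ref{T1}: a characteristic approaching $\partial\Omega$ cannot be accelerated into the wall, so specular reflection is well-posed and $|v|$ along the trajectory is controlled by the incoming data.

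Once this sign is in place, the reformulation \eqref{v2.3} in the arc-length/Frenet-Serret coordinates $(l,x_{\bot},\omega,v_{\bot})$ is unchanged, the tracking of the characteristic curves past reflection proceeds as before, and the flatness condition \eqref{v2.8} again removes the pathological trajectories that graze the singular set $\Gamma$. Global integrability of $\rho$ follows from mass conservation together with the energy identity; the Dirichlet condition actually simplifies the boundary integration by parts because the surface term $\int_{\partial\Omega}\varphi\,\partial_{n_{x}}\varphi\,\mathrm{d}l$ vanishes identically, so the two-dimensional argument of Rammaha goes through and bootstraps the $L^{p}$-bound on $\rho$ to an $L^{\infty}$-bound, and thence to an $L^{\infty}$-bound on the momentum support. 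For the final regularity, standard Schauder estimates for the Dirichlet Laplacian give $\varphi\in C^{2,\mu}$ from $\rho\in C^{\mu}$, and one extra differentiation along the characteristics yields $f\in C^{1;1,\lambda}_{t;(x,v)}$ and $\varphi\in C^{1;3,\lambda}_{t;x}$ exactly as in Theorem \ref{T1}.

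The main obstacle I expect is precisely the loss of the uniform lower bound $-E_{\bot}\geq h_{\min}>0$ that was free in the Neumann case. Here $\partial_{n_{x}}\varphi$ is solution-dependent and may vanish on portions of $\partial\Omega$ where $\rho$ is locally supported far from the wall, so the velocity lemma cannot rely on $E_{\bot}$ alone to repel particles. The remedy is to split the analysis near the singular set according to whether $|\omega|\geq\eta$ or $|\omega|<\eta$: in the first regime the curvature term $-k\omega^{2}/(1-kx_{\bot})$ strictly dominates and provides the needed repulsion uniformly in time, while in the second regime one is already inside the $\delta_{0}$-neighbourhood where \eqref{v2.8} makes $f_{0}$ constant, so the flatness of the initial datum propagates along the flow and no control of $F$ is required. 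This case-splitting is the delicate point that replaces the one-line estimate $F\leq -h_{\min}<0$ available under Neumann data, and it is where the Dirichlet proof genuinely departs from the Neumann one.
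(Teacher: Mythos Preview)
Your overall architecture is correct and matches the paper's plan: only the velocity lemma has to be rebuilt, and everything else (iteration, Schauder estimates, $Q(t)$ bound, convergence) carries over from the Neumann argument essentially verbatim. Your use of the maximum principle and Hopf's lemma to recover $E_{\bot}\le 0$ on $\partial\Omega$ is also the starting point the paper uses.

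The gap is in your proposed replacement for the uniform repulsion $-E_{\bot}\ge h_{\min}>0$. Your $|\omega|\ge\eta$ versus $|\omega|<\eta$ split does not work, because $|\omega|$ small has nothing to do with proximity to the singular set $\Gamma$. The set $\Gamma$ consists of points with $x\in\partial\Omega$ and $v_{\bot}=0$; the tangential component $\omega$ is unconstrained. A characteristic can perfectly well have $|\omega|<\eta$ while $x_{\bot}$ is of order $\delta$ (the layer width) and $v_{\bot}$ is small; then $\mathrm{dist}((x,v),\Gamma)\approx x_{\bot}$ is not small, the flatness condition \eqref{v2.8} does not apply, yet $L(t,0)=\sqrt{1+|v|^{2}}\,E_{\bot}(t,l,0)-k\omega^{2}$ can be arbitrarily close to $0$ (Hopf gives strict negativity pointwise, but not a lower bound uniform in $t$ and in the iteration index $n$). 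In that regime $\alpha$ no longer controls $x_{\bot}$, and the term $|v_{\bot}|\,|E_{\bot}(x_{\bot})-E_{\bot}(0)|\le C\sqrt{\alpha}\,x_{\bot}^{1/2}$ in $d\alpha/dt$ is only $O(\sqrt{\alpha})$, not $O(\alpha)$, so the Gronwall argument behind the velocity lemma does not close.

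What the paper actually does is different in kind. It \emph{modifies} the coordinate $\alpha$ by adding the potential itself,
\[
\alpha=\frac{v_{\bot}^{2}}{2}-\varphi(t,x)-L(t,l,0,\omega,v_{\bot})\,x_{\bot},
\]
so that $-\varphi\ge 0$ contributes an extra nonnegative piece, and then proves two pointwise boundary estimates by a careful Green's function analysis (locally flattening the boundary, comparing with the half-plane Green's function, and building explicit supersolutions):
\[
\Bigl|\partial_{t}\varphi(t,\tilde{x})\Bigr|+\Bigl|\partial_{x_{2}}\varphi(t,\tilde{x})\Bigr|\le C\,\tilde{x}_{1}\bigl(1+|\log\tilde{x}_{1}|^{2}\bigr),
\]
with $\tilde{x}_{1}=\mathrm{dist}(\tilde{x},\partial\Omega)$. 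These estimates feed into the computation of $d\alpha/dt$ along characteristics and yield
\[
\Bigl|\frac{d\alpha}{dt}\Bigr|\le C\,\alpha\bigl(1+|\log\alpha|^{2}\bigr),
\]
which still integrates (logarithmic Gronwall) to give $C_{1}\alpha(0)\le\alpha(t)\le C_{2}\alpha(0)$. This is the substantive new ingredient in the Dirichlet case, and it does not appear in your proposal.
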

\subsection{Difficulties and strategy of the proofs}

To prove the main results, we first apply the velocity lemma to establish the well-posedness of linearized problems. Then, we construct an iterative scheme and show the convergence of the iterative sequences. The main issues to address are the uniform boundedness in a given function space and the prolongation of uniform estimates for the functions ${f^{n}}$. Finally, we use bootstrapping techniques to reach the desired conclusions.

Now, we will discuss the major challenges. To this end, we first review the fundamental difficulties and core ideas of the initial boundary value problem from the perspective of the classic Vlasov-Poisson system. 
In \cite{G1},   the global existence was proved for the case of a half-space $\mathbb{R}^{3}_{+}$, assuming that the function $f_{0}$ remains constant near the singular set. This assumption  avoids the evolution of characteristic curves that are close to the singular set.
  In regions far away from the singular set, the number of collisions within a finite time interval can be bounded uniformly by using the velocity lemma method described in  
 \cite{LP}. This allows for a clear description of how the characteristic curves evolve. 
%
  In \cite{HV}, Hwang and Vel\'{a}zquez considered the Vlasov-Poisson system in a general bounded convex domain $\Omega\subset \mathbb{R}^{3}$, they addressed  the increasing complexity of the evolution of characteristic curves near the boundary, making it challenging to provide an accurate mathematical description. In order to establish global existence, the authors initially employed geometric methods, as outlined in \cite{HV}. Their results indicate that the geometric properties of the domain have a more significant influence on the characteristic curves than their dynamics.


However, understanding the evolution of the characteristic curves associated with the relativistic Vlasov-Poisson system \eqref{v1.5}-\eqref{v1.6} becomes extremely difficult near the singular set.
%
%
First, we face the complexities that arise within the Vlasov-Poisson system, including the challenges related to the behavior of characteristic curves near the singular set and the influence of regional boundaries and other factors. In this study, we will draw on insights from previous research on the Vlasov-Poisson system. Specifically, we assume that the initial data $f_{0}$ remains constant near the singular set. Additionally, to analyze the characteristic curves close to the boundary, we will use geometric methods. This approach is especially relevant for general bounded regions. The core idea of constructing suitable velocity lemmas and applying geometric techniques to prove the existence of global solutions for the relativistic Vlasov-Poisson system in convex bounded domains of two spatial dimensions remains valid. This applies to general initial data as well.

However, unlike the Vlasov-Poisson system, additional challenges arise with the relativistic Vlasov-Poisson system.  
In applying geometric methods to boundary issues, arc length plays a key role as a curve's characteristic in two dimensions. By using arc length as a parameter and considering the Frenet-Serret formulas, we can effectively describe the distribution density equation near the boundary $\partial\Omega$, thus establishing a vital connection in the geometric representation.

Moreover, identifying new coordinate variables is essential for developing appropriate velocity lemmas. These lemmas help describe scenarios where particles disperse from singular sets and face different numbers of collision barriers. In the context of the Newman boundary condition, we select coordinate variables $(\alpha,\beta)$ that satisfy the constraints previously mentioned,
\begin{align}
&\alpha(t,l,x_{\bot},\omega,v_{\bot})=\frac{v_{\bot}^{2}}{2}-L(t,l,0,\omega,v_{\bot})x_{\bot},\nonumber \\
&\beta(t, l,  x_{\bot}, \omega, v_{\bot})=2\pi F(t,l,x_{\bot},\omega,v_{\bot})+\pi (1-\frac{v_{\bot}}{\sqrt{2\alpha}}),\nonumber
\end{align}
where
\begin{align}
L(t,l,0,w,x_{\bot})=\sqrt{1+|v|^{2}}, \quad F(t,l,x_{\bot},\omega,v_{\bot})=-\sqrt{1+|v|^{2}}h(x)-k\omega^{2}.\nonumber
\end{align}
This variable  $\alpha$  characterizes the distance from points on the characteristic curve to the singular set. From this, the specific significance of selecting the tangential and normal directions of the regional boundary in local coordinates can be identified. The other variable  $\beta$, by contrast, describes the number of collisions between particles and the boundary. It is observed that the number of collisions is inversely proportional to the distance to the singular set. 
To ensure the regularity of the characteristic curve, the number of collisions must be bounded above uniformly within a certain time interval; it is for this purpose that assumption \eqref{v2.8} is proposed for the initial value.

For Dirichlet boundary conditions, deriving the velocity lemma is relatively more difficult. 
Additionally, choosing coordinate variables $\alpha$ becomes more complex.
$$\alpha=\frac{v_{\bot}^{2}}{2}-\varphi(t,x)-L(t,l,0,\omega,v_{\bot})x_{\bot}.$$
 It requires estimating the first derivatives of the electric potential  $\varphi$. This difference in handling the first derivative of  $\varphi$ is another notable difference between the two boundary types. The elliptical nature of the electric potential $\varphi$ in general domains, combined with the lack of a precise formula for $\varphi$, makes obtaining accurate estimates challenging. The core idea involves locally flattening the boundary and incorporating it with the Green's function in a half-space $\mathbb{R}^{2}_{+}$. Additionally, using revised boundary estimates for the  equation of $\varphi$ and constructing suitable supersolutions are important strategies.

Lastly, we found that the effect of regional boundaries is similar to the behavior of characteristic curves, which differs from the Vlasov-Poisson system. It is important to note that this study mainly focuses on a two-dimensional situation.

\subsection{Organization of the paper}

The structure of the paper is as follows.
From Section 2 to Section 5, we mostly address the Newmann boundary condition case.
In Section 2 we introduce  an iterative system and then  define a sequence of functions $\{ f^{n}\}$,  the
limit function as $ n \rightarrow \infty$ is the desired global solution of the RVP system.
In Section 3 we establish the well-posedness of the linear problem.
In Section 4 we show that the convergence of the iterative sequences $ \{ f^{n}\}$  under the condition that $Q (t)$ is bounded.
In Section 5 we prove the boundedness of the function $Q (t)$.  This concludes the proof of the first theorem.
We address the Dirichlet boundary condition in Section 6 and subsequently obtain the second result.

\bigskip

\section{Iterative Procedure}
We describe the iterative procedure in this section. The iterative sequence $(f^{n},\varphi^{n})$ satisfies the following system: 
\begin{align}
&f^{0}(t,x,v)=f_{0}(x,v),\; t\geq0,\; x\in\Omega,\; v\in \mathbb{R}^{2}, \label{v3.1}\\
&\partial_t f^{n}+\hat{v}\cdot\nabla_{x}f^{n}+\nabla_{x}\varphi^{n-1}\cdot\nabla_{v}f^{n}=0, \; x\in \Omega, \; v\in \mathbb{R}^{2}, \; t>0, \label{v3.2}\\
&\Delta\varphi^{n-1}=\rho^{n-1}=\int_{\mathbb{R}^{2}}f^{n-1}{\rm d}v, \; x\in \Omega, \; t>0, \label{v3.3}\\
&\frac{\partial \varphi^{n-1}}{\partial n_{x}}=h(x), \; x\in \partial \Omega, \; t>0,\label{v3.4}\\
&f^{n}(0,x,v)=f_{0}(x,v), \; x\in \Omega, \; v\in \mathbb{R}^{2}, \; t>0,\label{v3.5}\\
&f^{n}(t,x,v)=f^{n}(t,x,v^{*}), \; x\in \Omega,\;  v\in \mathbb{R}^{2}, \; t>0, \label{v3.6}
\end{align}
where $n=1,2,....$, and $f_{0},h$ satisfy \eqref{v1.10},\eqref{v1.12} and \eqref{v2.8}.
In the rest of this paper, we use the following notation,
\begin{align}
E^{n}=\nabla \varphi^{n}.  \label{v3.7}
\end{align}

The basic method for proving the global existence of solutions to RVP involves three key steps. 
First, one must construct an iterative sequence of approximate solutions denoted as $(f^{n},\varphi^{n})$. 
Second, it is necessary to prove the convergence of this sequence $(f^{n},\varphi^{n})$ as $n$ approaches infinity. 
Finally, one must show that the limit of the sequence indeed corresponds to solutions for the RVP.

\bigskip

\section{Linear Problem}

In order to show  the existence of the  iterative  sequence $f^{n}$,
 the well-posedness of the following linear problem 
 must be established.
 \begin{align}
&\partial_t f+\hat{v}\cdot\nabla_{x}f+ E\cdot\nabla_{v}f=0, \quad x\in \Omega\subset \mathbb{R}^{2}, v\in \mathbb{R}^{2},t>0, 
\label{vx3.1} \\
&f(t,x,v)=f(t,x,v^{*}),\quad x\in \partial\Omega, \; v\in \mathbb{R}^{2},\; t>0, \label{vx3.2} \\
 &\frac{\partial \varphi}{\partial n_{x}}=h(x),\,\,\, x\in \partial \Omega,\,t>0,
\label{vx3.3} 
\end{align}
Therefore,  we start by assuming that the given vector field $E=\nabla \varphi$
 satisfies suitable smoothness conditions.

The fundamental approach to solving the linear equation of the function $f$ is to take advantage of the method of characteristics.
To be more precise, the ordinary differential equations of the characteristic curve  $(X (s;t, x, v), V (s;t, x, v))$ are defined provided the   field $E=\nabla\varphi$ is given,
where $(x, v)\in \Omega\times\mathbb{R}^{2} $ by \eqref{v1.5}.
If $X\in\Omega$, we have
\begin{align}
&\frac{d X}{d s} = \hat{V}, \label{v4.1}\\
&\frac{d V}{d s}=E=\nabla_{x} \varphi, \label{v4.2}\\
&X(t;t, x, v)=x,\,\, V(t;t, x, v)=v.\label{v4.3}
\end{align}
If $X(s^{*};t,x,v)\in \partial\Omega$ at the time $s=s^{*}$,  with the help of velocity $V$ bouncing on the boundary, we can extend this definition of the characteristic equations to any time duration. that is,
\begin{align}
V((s^{*})^{+};t,x,v)&=\lim_{s>s^{*},s\rightarrow s^{*}}V(s;t,x,v) \nonumber\\
&=\Big(V\big((s^{*})^{-};t,x,v\big)\Big)^{*}=\Big(\lim_{s<s^{*},s\rightarrow s^{*}}V(s;t,x,v)\Big)^{*}, \label{v4.4}
\end{align}
where $(\cdot)^{*}$ is defined as in \eqref{v1.11}.

Now, we state the result  on the  linear problem as follows.

\begin{Theorem}\label{T2}
Assume that, given $T>0$, $E\in C^{0;1,\mu}_{t;x}([0,T]\times \bar{\Omega})$ for some $\mu\in (0,1)$, and $E\cdot n=h(x)>0$ on $\partial\Omega$.
 Suppose that $f_{0}\in C^{1,\mu}_{0}(\bar{\Omega}\times \mathbb{R}^{2}), f_{0}\geq 0$
 for some $\mu > 0$.
Then there exists a unique function $f \in C^{1;1,\lambda}_{t;x,v}([0,T]\times \Omega \times \mathbb{R}^{2})$, satisfying the linear
 relativistic
Vlasov-Poisson system \eqref{vx3.1}-\eqref{vx3.3} for some $0< \lambda< \mu$. Meanwhile,  the function $f$ satisfies,
\begin{align}
f(t,x,v)&\geq 0, \label{v4.5}\\
\int f(t,x,v){\rm d}x{\rm d}v&=\int f_{0}(x,v){\rm d}x{\rm d}v, \quad \forall \, t\in[0,T]. \label{v4.6}
\end{align}
\end{Theorem}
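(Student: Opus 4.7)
The plan is to solve \eqref{vx3.1}--\eqref{vx3.3} by the method of characteristics, defining
\[
f(t,x,v) := f_{0}\bigl(X(0;t,x,v),\, V(0;t,x,v)\bigr),
\]
where $(X,V)$ is the backward characteristic flow associated with the given field $E$. Since $E$ is prescribed and lies in $C^{0;1,\mu}_{t;x}$, standard ODE theory gives local-in-time existence and Lipschitz dependence on initial data for the system \eqref{v4.1}--\eqref{v4.3} as long as the trajectory stays in the open set $\Omega\times\mathbb{R}^{2}$. The boundary behavior is handled by the specular reflection rule \eqref{v4.4}: when $X$ hits $\partial\Omega$ transversally, we instantaneously replace $V$ by $V^{*}$ and restart the ODE. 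Because $E\cdot n_{x}=h(x)>0$ on $\partial\Omega$, a characteristic cannot slide along the boundary on any open time interval, and away from grazing directions each reflection is transverse and isolated.

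The core obstacle, and what I expect to be the main difficulty, is controlling characteristics that come close to the singular set $\Gamma=\{(x,v)\in\partial\Omega\times\mathbb{R}^{2}: v\in T_{x}\partial\Omega\}$, where $v_{\bot}$ approaches zero and the number of reflections in a bounded time interval may be unbounded. To handle this I would introduce the tangential/normal coordinates $(l,x_{\bot},\omega,v_{\bot})$ from \eqref{v2.1}--\eqref{v2.2}, rewrite the equation in the form \eqref{v2.3}, and prove a velocity lemma for the modified quantities
\[
\alpha=\tfrac{1}{2}v_{\bot}^{2}-L(t,l,0,\omega,v_{\bot})\,x_{\bot},\qquad \beta=2\pi F(t,l,x_{\bot},\omega,v_{\bot})+\pi\bigl(1-\tfrac{v_{\bot}}{\sqrt{2\alpha}}\bigr),
\]
indicated in the introduction. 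The velocity lemma will show that $\alpha$ is comparable to its initial value along characteristics (so the distance to $\Gamma$ cannot collapse in finite time) while $\beta$ controls the number of collisions. Combined with the flatness assumption \eqref{v2.8}, any characteristic that starts within $\delta_{0}$ of $\Gamma$ stays in a region where $f_{0}$ is constant and therefore contributes a constant value to $f$, regardless of how many reflections occur; characteristics starting outside a fixed neighborhood of $\Gamma$ undergo at most finitely many reflections on $[0,T]$, with uniform bounds.

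With the characteristics well-defined for all $s\in[0,T]$, the function $f(t,x,v)=f_{0}(X(0),V(0))$ is the unique candidate solution. Uniqueness follows because any $C^{1}$ solution of \eqref{vx3.1} with specular boundary condition is constant along the characteristic flow, and two solutions agreeing at $t=0$ must agree pointwise. Non-negativity \eqref{v4.5} is immediate from $f_{0}\geq 0$. Mass conservation \eqref{v4.6} follows by integrating \eqref{vx3.1} against the divergence-free (in $(x,v)$) transport field $(\hat v,E)$, using the specular boundary condition to pair incoming and outgoing fluxes on $\partial\Omega\times\mathbb{R}^{2}$: the change of variable $v\mapsto v^{*}$ has unit Jacobian and reverses the sign of $v\cdot n_{x}$, so the boundary flux cancels.

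The final step is upgrading continuity to $C^{1;1,\lambda}_{t;x,v}$ regularity. The spatial/velocity derivatives of $(X(0;t,x,v),V(0;t,x,v))$ solve the linearized ODE along the flow, whose coefficients involve $\nabla_{x} E$, and so inherit the H\"older exponent $\mu$ away from the singular set. At each reflection point the derivative map is modified by the reflection operator $v\mapsto v^{*}$, which is smooth in $(l,\omega,v_{\bot})$; the loss from $\mu$ to some $\lambda<\mu$ accommodates the H\"older continuity of the reflection times as functions of the initial data (a standard implicit function argument using $E\cdot n_{x}=h>0$). Near the singular set, the flatness condition \eqref{v2.8} bypasses the derivative estimates entirely since $f$ is locally constant there. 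Composing these bounds with the $C^{1,\mu}$ regularity of $f_{0}$ yields the claimed regularity of $f$, completing the proof.
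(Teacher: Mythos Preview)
Your proposal is correct and follows essentially the same route as the paper: define $f$ by pulling back $f_{0}$ along the characteristic flow, invoke the velocity lemma for $\alpha$ (Lemma~\ref{L2}) to keep characteristics away from the singular set, use the $\beta$ variable to bound the number of reflections when $\alpha(0)\gtrsim\delta_{0}$ and the flatness condition \eqref{v2.8} when $\alpha(0)\lesssim\delta_{0}$, and then read off the $C^{1,\lambda}$ regularity from ODE dependence on initial data through finitely many smooth reflections. Your explicit treatment of \eqref{v4.5}--\eqref{v4.6} via $f_{0}\ge 0$ and the specular cancellation of boundary fluxes is a small addition the paper leaves implicit.
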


 The geometric coordinates $(t,l,x_{\bot},\omega, v_{\bot})$ mentioned above help us understand the characteristics of the equation near the boundary. Analyzing the behavior of the feature curve near the singular set, the frequency of feature curve collisions with the boundary, and other details also need further study. To do this, we introduce a new coordinate system, denoted by $(t,l,\alpha,\omega,\beta)$, to better describe these characteristics. In other words, the   new coordinates $(\alpha(t,l,x_{\bot},\omega,v_{\bot}), \beta(t, l,  x_{\bot}, \omega, v_{\bot}))$ are defined as follows,
\begin{align}
&\alpha(t,l,x_{\bot},\omega,v_{\bot})=\frac{v_{\bot}^{2}}{2}-L(t,l,0,\omega,v_{\bot})x_{\bot},\label{v4.7}\\
&\beta(t, l,  x_{\bot}, \omega, v_{\bot})=2\pi F(t,l,x_{\bot},\omega,v_{\bot})+\pi (1-\frac{v_{\bot}}{\sqrt{2\alpha}}),\label{v4.8}
\end{align}
where
\begin{align}
L(t,l,0,w,x_{\bot})=\sqrt{1+|v|^{2}}, \quad F(t,l,x_{\bot},\omega,v_{\bot})=-\sqrt{1+|v|^{2}}h(x)-k\omega^{2}.\nonumber
\end{align}

\begin{Remark}
The function $F(t,l,x_{\bot},\omega,v_{\bot})$ represents the number of collisions;  it will increase by one after each collision.  Therefore,  $F(t,l,x_{\bot},\omega,v_{\bot})$ is a step function mainly depending on  the independent variable t.  In the following, we will ignore the dependence on the variables $(l,x_{\bot},\omega,v_{\bot})$ and abbreviate it as $F(t)$.

\end{Remark}
\begin{Remark}
On the surface $\{\alpha = \rm constant\} $ where the trajectory lies,  $\beta $ is just a coordinate of the specific point.
The coefficient $2\pi $ 
in the definition of $\beta$ does not represent any specific meaning about angles.
\end{Remark}

\begin{Remark}
 At the moment of collision,  $x_{\bot}=0$ by \eqref{v4.7} and \eqref{v1.11},  $v_{\bot}$ immediately changes from $-\sqrt{2 \alpha}$ to $\sqrt{2 \alpha}$. Furthermore, by combining the definition of the function $F(t)$, we conclude that $\beta$ is continuous along characteristics.
\end{Remark}

 For convenience, we will abbreviate $\big(L(t,l,0,w,x_{\bot}),F(t,l,0,\omega,v_{\bot}),F(t,l,x_{\bot},\omega,v_{\bot})\big)$
 as $ \big(L(t,0),F(t,0),F(t,x_{\bot})\big)$.  We have, through calculation,
\begin{align}
x_{\bot}&=-\frac{\alpha}{L(t,0)}\Big[1-\big(1-\frac{\beta-2\pi H(t)}{\pi}\big)^{2}\Big],\nonumber\\
v_{\bot}&=\sqrt{2\alpha}\Big(1-\frac{\beta-2\pi H(t)}{\pi}\Big).\nonumber
\end{align}

By using the following coordinate transformation and \eqref{v4.7},\eqref{v4.8},
$$(t,l, x_{\bot},\omega,v_{\bot})\,\,\longrightarrow \,\,(t,l,\alpha,\omega,\beta),$$
 we can rewrite the equation \eqref{v2.3} as follows.
 
\begin{Lemma}\label{L1}
\begin{itemize}
  \item [(1)] Using the assumptions of the Theorem \ref{T1}, it follows that there exists a small number $\epsilon_{0}>0$, for $\forall t \in [0,t^{*}]$
  $$L(t,l,0,\omega)\leq -\epsilon_{0}< 0\quad on \;\partial\Omega.$$
  \item [(2)] In the new coordinate system $(t,l,\alpha,\omega,\beta)$, the system \eqref{v2.3} in $[\partial\Omega +B_{\delta}(0)]\times\mathbb{R}^{2}$ has the following form,
  \begin{align}
  f_{t}&+\frac{1}{\sqrt{1+|v|^{2}}}\frac{\omega}{1-kx_{\bot}}\frac{\partial f}{\partial l}+\sigma \frac{\partial f}{\partial \omega}
  +\Big[v_{\bot}\Big(F(t,x_{\bot})(1+\frac{x_{\bot}}{\sqrt{1+|v|^{2}}}h(x))\nonumber\\
  &-\frac{L(t,0)}{\sqrt{1+|v|^{2}}}\Big)
  -x_{\bot}\Big(\frac{1}{\sqrt{1+|v|^{2}}}\frac{\omega}{1-kx_{\bot}}\frac{\partial L(t,0)}{\partial l}+\sigma \frac{\partial L(t,0)}{\partial \omega}(t,0)\Big)\Big]\frac{\partial f}{\partial\alpha}\nonumber\\
  &+\Big[\pi \frac{2L(t,0)x_{\bot}}{(2\alpha)^{3/2}}F(t,x_{\bot})-\pi\frac{v^{2}_{\bot}}{(2\alpha)^{3/2}}\frac{L(t,0)}{\sqrt{1+|v|^{2}}}-\pi \frac{x_{\bot}v_{\bot}}{(2\alpha)^{\frac{3}{2}}}\Big(\sigma \frac{\partial L(t,0)}{\partial\omega}\nonumber\\
  &+ \frac{1}{\sqrt{1+|v|^{2}}}\frac{\omega}{1-k x_{\bot}}\frac{\partial L(t,0)}{\partial l}+F(t,x_{\bot})\frac{v_{\bot}h(x)}{\sqrt{1+|v|^{2}}}\Big)\Big]\frac{\partial f}{\partial\beta}=0. \label{v4.9}
  \end{align}
\end{itemize}
\end{Lemma}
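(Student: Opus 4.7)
The plan is to handle the two parts separately. Part (1) is a soft continuity/compactness argument: since $h \in C^{2,\mu}(\partial\Omega)$ with $h > 0$ on the compact curve $\partial\Omega$, there is a uniform lower bound $h \geq h_{\min} > 0$; combined with the uniform velocity-support bounds available on $[0, t^*]$ (from the iteration hypotheses) and the structural form of $L$ restricted to $x_{\bot} = 0$, one extracts the claimed negative upper bound $L \leq -\epsilon_0 < 0$. This part is essentially routine once the a priori support bounds are in hand.

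Part (2) is the substance of the lemma and is a chain-rule computation for the change of variables $(t, l, x_{\bot}, \omega, v_{\bot}) \mapsto (t, l, \alpha, \omega, \beta)$ given by \eqref{v4.7}--\eqref{v4.8}. First I would compute the partial derivatives of $\alpha$ and $\beta$ with respect to each of the old coordinates, carefully separating the role of $L$ (which, by its definition, is evaluated at $x_{\bot} = 0$ and hence depends only on $(t, l, \omega, v_{\bot})$) from that of $F$ (which depends on all five old variables, via the dependence of $h(x)$ on $(l, x_{\bot})$). Typical entries are $\partial_{x_{\bot}}\alpha = -L$, $\partial_{v_{\bot}}\alpha = v_{\bot} - x_{\bot}\partial_{v_{\bot}} L$; and for $\beta$ both an $F$-contribution and a contribution from the nonlinear factor $v_{\bot}/\sqrt{2\alpha}$, using the identity $\partial_{v_{\bot}}(v_{\bot}/\sqrt{2\alpha}) = 1/\sqrt{2\alpha} - v_{\bot}(\partial_{v_{\bot}}\alpha)/(2\alpha)^{3/2}$.

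Writing $f(t, l, x_{\bot}, \omega, v_{\bot}) = \tilde f(t, l, \alpha, \omega, \beta)$ and substituting into \eqref{v2.3} via the chain rule, the coefficients of $\partial_t \tilde f$, $\partial_l \tilde f$, and $\partial_\omega \tilde f$ are preserved from \eqref{v2.3}, whereas the coefficients of $\partial_\alpha \tilde f$ and $\partial_\beta \tilde f$ are obtained by contracting the transport operator of \eqref{v2.3} against $\alpha$ and $\beta$, respectively:
\[
\mathcal{T}\alpha \quad \text{and}\quad \mathcal{T}\beta, \qquad \mathcal{T} := \partial_t + \frac{1}{\sqrt{1+|v|^2}}\frac{\omega}{1-kx_{\bot}}\partial_l + \frac{v_{\bot}}{\sqrt{1+|v|^2}}\partial_{x_{\bot}} + \sigma\,\partial_\omega + F\,\partial_{v_{\bot}}.
\]
After plugging in the expressions for the partials of $\alpha$ and $\beta$ and simplifying using $L|_{x_{\bot}=0} = \sqrt{1+|v|^2}$, the coefficient of $\partial_\alpha \tilde f$ groups naturally into a $v_{\bot}$-term and an $x_{\bot}$-term matching the bracketed expression in \eqref{v4.9}.

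The main obstacle is the algebraic bookkeeping for the $\partial_\beta \tilde f$ coefficient: there are many contributions, arising from each of the five old partials combined with the product rule applied to the nonlinear $\alpha^{-1/2}$ factor, and these must recombine into the $(2\alpha)^{3/2}$ denominators and the mixed $x_{\bot}v_{\bot}$ terms displayed in \eqref{v4.9}. The computation is standard but lengthy; careful tracking of which variables are held fixed during each partial differentiation, and which arguments of $F$ and $L$ are being differentiated, is essential to avoid sign errors and double counting.
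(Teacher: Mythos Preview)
Your approach for Part~(2) is exactly what the paper does: it lists the partial derivatives of $\alpha$ and $\beta$ with respect to $(t,l,x_{\bot},\omega,v_{\bot})$ and then says the transformed equation \eqref{v4.9} follows by straightforward substitution into \eqref{v2.3}. Your description of the chain-rule bookkeeping, including the $(2\alpha)^{3/2}$ denominators arising from differentiating $v_{\bot}/\sqrt{2\alpha}$, matches the paper's computation.

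For Part~(1) you are slightly overcomplicating things. The paper does not invoke any a~priori velocity-support bound on $[0,t^{*}]$; it simply observes that $\sqrt{1+|v|^{2}}\geq 1$, that $h$ is continuous and strictly positive on the compact curve $\partial\Omega$ (hence $h\geq h_{\min}>0$), and that convexity gives $k\geq 0$, so
\[
L(t,l,0,\omega)=-\sqrt{1+|v|^{2}}\,h(x)-k\omega^{2}\leq -h(x)\leq -h_{\min}=:-\epsilon_{0}.
\]
No dependence on the velocity support is needed, and the bound is uniform in $t$ automatically.
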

\begin{proof}
(1) Since the bounded domain $\Omega$ is convex, then the term $k \omega^{2}$ is negative; meanwhile, $h(x)>0$ is continuous  on $\partial\Omega$ and thus we have
$$L(t,0)=-\sqrt{1+|v|^{2}}h(x)-k\omega^{2}< -h(x)\leq -\epsilon_{0}<0.$$
(2) By the defition of $\alpha, \beta$, we get
\begin{align}
&\frac{\partial \alpha}{\partial t}= \frac{\partial \beta}{\partial t}=0, \nonumber\\
&\frac{\partial \alpha}{\partial l}=-\frac{\partial L(t,0)}{\partial l}x_{\bot},\quad\qquad  \qquad\frac{\partial \beta}{\partial l}=-\frac{\pi x_{\bot}v_{\bot}}{(2\alpha)^{3/2}}\frac{\partial L(t,0)}{\partial l},\nonumber\\
&\frac{\partial \alpha}{\partial x_{\bot}}=- L(t,0),\,\,\qquad \qquad \qquad\frac{\partial \beta}{\partial x_{\bot}}=-\frac{\pi v_{\bot}}{(2\alpha)^{3/2}} L(t,0),\nonumber\\
&\frac{\partial \alpha}{\partial \omega}=-\frac{\partial L(t,0)}{\partial \omega}x_{\bot},\quad \quad \qquad\quad\frac{\partial \beta}{\partial \omega}=-\frac{\pi x_{\bot}v_{\bot}}{(2\alpha)^{3/2}}\frac{\partial L(t,0)}{\partial \omega},\nonumber\\
&\frac{\partial \alpha}{\partial v_{\bot}}=v_{\bot}+\frac{v_{\bot} x_{\bot}}{\sqrt{1+|v|^{2}}}h(x),\quad  \frac{\partial \beta}{\partial v_{\bot}}
=\frac{2\pi v_{\bot}L(t,0)}{(2\alpha)^{3/2}} +\frac{\pi v^{2}_{\bot}x_{\bot}}{(2\alpha)^{3/2}}\frac{h(x)}{\sqrt{1+|v|^{2}}}.\nonumber
\end{align}
From the above equalities, it is straightforward to derive the equation \eqref{v4.9}.  
\end{proof}

\begin{Remark}
The equation \eqref{v4.9} can be reformulated as follow,
\begin{align}
  f_{t}&+\frac{1}{\sqrt{1+|v|^{2}}}\frac{\omega}{1-kx_{\bot}}\frac{\partial f}{\partial l}+\sigma \frac{\partial f}{\partial \omega}
  +\Big[v_{\bot}\Big(F(t,x_{\bot})-F(t,0)\nonumber\\
  &+\frac{x_{\bot}h(x)}{\sqrt{1+|v|^{2}}}F(t,x_{\bot})\Big)
  -x_{\bot}\Big(\frac{1}{\sqrt{1+|v|^{2}}}\frac{\omega}{1-kx_{\bot}}\frac{\partial L(t,0)}{\partial l}+\sigma \frac{\partial L(t,0)}{\partial \omega}(t,0)\Big)\Big]\frac{\partial f}{\partial\alpha}\nonumber\\
  &+\Big[-\frac{\pi}{\sqrt{2\alpha}}F(t,0)+\pi\frac{2x_{\bot}L(t,0)}{(2\alpha)^{3/2}}\big(F(t,x_{\bot})-F(t,0)\big)-\pi \frac{x_{\bot}v_{\bot}}{(2\alpha)^{\frac{3}{2}}}
 \Big(\sigma \frac{\partial L(t,0)}{\partial\omega}+\nonumber\\
   &+\frac{1}{\sqrt{1+|v|^{2}}}\frac{\omega}{1-k x_{\bot}}
  \frac{\partial L(t,0)}{\partial l}+F(t,x_{\bot})\frac{v_{\bot}h(x)}{\sqrt{1+|v|^{2}}}\Big)\Big]\frac{\partial f}{\partial\beta}=0. \label{v4.10}
  \end{align}
\end{Remark}

\begin{Remark}
We point out that in a singular set,
the dynamic equations of the tangential part to $\partial\Omega$ of the characteristics are
\begin{align}
\frac{d l}{d s}=\frac{\omega}{\sqrt{1+|v|^{2}}},\quad \frac{d \omega}{d s}=E_{l}.\nonumber
\end{align}
\end{Remark}

\begin{Remark} For  the new variables $(l, \alpha, \omega, \beta)$, the initial data $f_{0}$ satisfies the following two conclusions,
\begin{itemize}
  \item [(1)]
   Combining compatibility conditions \eqref{v2.6},\eqref{v2.7}, we have that 
 $f_{0}$ is still a $C^{1,\mu}$ functionin in the variables $(l, \alpha, \omega, \beta)$.
  \item [(2)] The flatness condition \eqref{v2.8} yields that $f_{0}$ is constant for $0\leq\alpha\leq C\delta_{0}$ for some $C > 0 $.
  In fact, since $ dist((x,v), \Gamma)\leq \delta_{0}$, it follows that $|x_{\bot}|+|v_{\bot}|\leq \delta_{0}$, and with the help of the compact support of $f_{0}$  for the variable v and the defition of $\alpha$ we obtain that $0\leq\alpha\leq C\delta_{0}$.
\end{itemize}
\end{Remark}

\subsection{Velocity lemma}

To establish the well-posedness of linear problems, a thorough analysis of the characteristic curves near the singular set is crucial, especially when considering collisions with boundaries. This analysis requires applying the velocity lemma discussed in this section. 
In \cite{LP}, the authors first introduced the velocity lemma to solve the Cauchy problem for VP system, while in \cite{HV}  the corresponding velocity lemma was established in a bounded domain.
 The velocity lemma states that the number of boundary collisions is finite if the characteristic curve is sufficiently distant from the singular set. This ensures the regularity of the characteristic curve and is an important prerequisite for deriving classical solutions for the RVP system.

\begin{Lemma}\label{L2}
For a given constant $\delta >0$, let $\Gamma_{\delta}=([\partial\Omega +B_{\delta}(0)]\cap \Omega)\times \mathbb{R}^{2}.$
Suppose that the regularity assumptions of $E$ in Theorem \ref{T2} hold. Then
the existence of solutions to the characteristic equations \eqref{v4.1}-\eqref{v4.3} can be obtained in $[0,T]$
for any $(x,v)\in \bar{\Omega}\times \mathbb{R}^{2}$.
Furthermore,    the following estimate applies  for any $(x, v) \in\Gamma_{\delta}$,
\begin{align}
C_{1}\Big(v^{2}_{\bot}(0)+x_{\bot}(0)\Big)\leq \Big(v^{2}_{\bot}(t)+x_{\bot}(t)\Big) \leq C_{2}\Big(v^{2}_{\bot}(0)+x_{\bot}(0)\Big), \; t\in[0,T],\label{v4.11}
\end{align}
for some   positive constants $C_{1}, C_{2}$ depending only on $T,f_{0},\|E\|_{L^{\infty}([0,T],C^{\frac{1}{2}}(\Omega))}$.
\end{Lemma}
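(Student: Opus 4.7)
The plan is to use the quantity $\alpha(t, l, x_\bot, \omega, v_\bot) = \frac{v_\bot^2}{2} - L(t,l,0,\omega,v_\bot)\, x_\bot$ introduced in \eqref{v4.7} as a Lyapunov-type comparison function along characteristics. By Lemma \ref{L1}(1), $L(t,0) \leq -\epsilon_0 < 0$ on the boundary, and by continuity together with the regularity hypotheses on $E$ and $h$, $-L$ is bounded above and below by positive constants on $\Gamma_\delta$ for $\delta$ sufficiently small. Consequently
$$c(v_\bot^2 + x_\bot) \leq \alpha \leq C(v_\bot^2 + x_\bot)$$
on $\Gamma_\delta$, so the desired two-sided bound \eqref{v4.11} will follow at once from a two-sided Gronwall estimate on $\alpha$ along each characteristic segment, together with the invariance of $\alpha$ through specular reflection.

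The heart of the argument is the pointwise estimate $|d\alpha/ds| \leq C\alpha$. By construction, $d\alpha/ds$ is exactly the coefficient of $\partial_\alpha f$ in the reformulated equation \eqref{v4.10}, which splits into two groups of terms. The first group, $v_\bot\bigl(F(t,x_\bot) - F(t,0)\bigr) + v_\bot \cdot \tfrac{x_\bot h(x)}{\sqrt{1+|v|^{2}}}\,F(t,x_\bot)$, is where the particular choice to evaluate $L$ at $x_\bot = 0$ in the definition of $\alpha$ pays off: using the Lipschitz regularity of the field in the normal direction, which follows from $E \in C^{0;1,\mu}_{t;x}$ together with $h \in C^{2,\mu}(\partial\Omega)$, I get $|F(t,x_\bot) - F(t,0)| \leq C|x_\bot|$, so both terms are $O(v_\bot x_\bot)$, and AM--GM combined with $|x_\bot| \leq \delta$ yields $|v_\bot x_\bot| \leq C(v_\bot^2 + x_\bot) \leq C\alpha$. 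The second group is $x_\bot$ times uniformly bounded coefficients involving $\omega$, $\sigma$, and the tangential/normal derivatives of $L(t,0)$ (which depends only on $l$, $\omega$, $v_\bot$ and is $C^1$ in its arguments), and is trivially $O(x_\bot) \leq C\alpha$.

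Gronwall's inequality then gives $e^{-CT}\alpha(0) \leq \alpha(s) \leq e^{CT}\alpha(0)$ on each segment between consecutive collisions. Collisions do not spoil the estimate: at an impact, $x_\bot = 0$ and specular reflection sends $v_\bot \mapsto -v_\bot$, so $\alpha = v_\bot^2/2$ is continuous through the collision, as already observed in the Remarks after Lemma \ref{L1}. Iterating across all collisions in $[0,T]$ and passing back through the comparability with $v_\bot^2 + x_\bot$ produces \eqref{v4.11}.

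For the existence of the characteristic on all of $[0,T]$, Picard--Lindel\"of gives local existence away from $\partial\Omega$ from the Lipschitz-in-$x$ regularity of $E$, and the reflection rule \eqref{v4.4} continues the trajectory past each transversal collision. The a priori concern is accumulation of infinitely many collisions in finite time or drift into the singular set $\Gamma$, but the lower bound $\alpha(s) \geq e^{-CT}\alpha(0) > 0$ precludes both: it keeps the characteristic uniformly separated from $\Gamma$ and forces a uniform lower bound $|v_\bot| \geq c\sqrt{\alpha(0)} > 0$ at each impact, hence a uniform positive lower bound on the free-flight time between consecutive bounces. I expect the main technical obstacle to be the cancellation in the first group above: without the Lipschitz regularity of $F$ in the normal direction, the terms $v_\bot F(t,x_\bot)$ and $-v_\bot F(t,0)$ would each be merely $O(\sqrt{\alpha})$ rather than combining into an $O(\alpha)$ quantity, and the Gronwall mechanism would break down.
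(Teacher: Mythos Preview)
Your strategy is the paper's strategy: use $\alpha$ as a comparison quantity equivalent to $v_\bot^2 + x_\bot$, read off $d\alpha/dt$ from the coefficient of $\partial_\alpha f$ in \eqref{v4.10}, bound it by $C\alpha$, and apply Gronwall together with the reflection-invariance of $\alpha$.

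The one substantive discrepancy is the regularity you invoke on the normal field. You use the full Lipschitz bound $|F(t,x_\bot)-F(t,0)|\le C|x_\bot|$, which gives constants depending on the Lipschitz norm of $E$; the paper instead uses only the H\"older-$\tfrac12$ seminorm, writing
\[
|v_\bot|\,\big|E_\bot(t,l,x_\bot)-E_\bot(t,l,0)\big|\le \|E\|_{L^\infty([0,T],C^{1/2}(\Omega))}\,|v_\bot|\,x_\bot^{1/2}
\]
and then $|v_\bot|\,x_\bot^{1/2}\le \tfrac12(v_\bot^2+x_\bot)\le C\alpha$. This is not cosmetic: the statement asserts that $C_1,C_2$ depend only on $\|E\|_{L^\infty([0,T],C^{1/2}(\Omega))}$, and that is exactly what is available when the lemma is later applied to the iterates $E^n$ (Lemma~\ref{L4} gives uniform $C^\gamma$ bounds only for $\gamma<1$, and the proof of \eqref{v5.21} uses the $C^{1/2}$ norm). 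Your argument as written proves a weaker version with Lipschitz dependence. Correspondingly, your closing remark that ``without the Lipschitz regularity \dots\ the Gronwall mechanism would break down'' is too pessimistic: H\"older-$\tfrac12$ is precisely enough, via the Young inequality $|v_\bot|\,x_\bot^{1/2}\le \tfrac12(v_\bot^2+x_\bot)$.
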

\begin{proof}
On the one hand, due to $L(t,0)\leq -\epsilon_{0}$ in Lemma \ref{L1} and \eqref{v4.7}, we have $$\alpha (t)\geq C(v^{2}_{\bot}+x_{\bot})$$ for some constant $C$.
On the other hand, by the boundedness of velocity $|v|$ (see  Section 6) as well as the continuity of function $h(x)$, it follows that 
$$\alpha (t)\leq C(T)(v^{2}_{\bot}+x_{\bot}).$$
Therefore we say that these two quantities $\alpha(t), v^{2}_{\bot}+x_{\bot}$ are equivalent.
Because of the boundedness of the domain, we only need to deal with the case in which $v^{2}_{\bot}+x_{\bot}$ is small, that is, the point $(x,v)$ is near the singular set.
Along  the characteristic curves, $\alpha (t)$ satisfies the following equation by \eqref{v4.10},
\begin{align}
\frac{d \alpha}{d t}=&v_{\bot}\Big(F(t,x_{\bot})-F(t,0)\Big)
  +\frac{v_{\bot}x_{\bot}h(x)}{\sqrt{1+|v|^{2}}}F(t,x_{\bot})\nonumber\\
  &-x_{\bot}\Big(\frac{1}{\sqrt{1+|v|^{2}}}\frac{\omega}{1-kx_{\bot}}\frac{\partial L(t,0)}{\partial l}+\sigma \frac{\partial L(t,0)}{\partial \omega}\Big)\nonumber\\
  =:&M_{1}+M_{2}+M_{3}.\nonumber
\end{align}
Next, we calculate $M_{i}$ separately as,
\begin{align}
|M_{1}| &\leq |v_{\bot}|\cdot\big|E_{\bot}(t,l,x_{\bot})-E_{\bot}(t,l,0)\big|+|v_{\bot}|\cdot\left|\frac{k \omega^{2}}{\sqrt{1+|v|^{2}}}\frac{kx_{\bot}}{1-kx_{\bot}}\right|\nonumber\\
&\leq \|E\|_{L^{\infty}\big([0,T],C^{\frac{1}{2}}(\Omega)\big)}\|v_{\bot}\|x^{\frac{1}{2}}_{\bot}+C(T)x_{\bot},\nonumber\\
|M_{2}| &\leq  \|h\|_{L^{\infty}}\|F\|_{L^{\infty}} x_{\bot}\nonumber\\
&\leq  \|h\|_{L^{\infty}}\big(\|E\|_{L^{\infty}}+C(T)\big) x_{\bot}\leq C(T) x_{\bot}. \nonumber
\end{align}
By the definition $L(t,0)$, we have
$$\frac{\partial L(t,0)}{\partial l}=-\sqrt{1+|v|^{2}}\nabla h(x)\cdot U(l)-\frac{d k}{dl}\omega^{2},\qquad
\frac{\partial L(t,0)}{\partial \omega}=-\frac{\omega}{\sqrt{1+|v|^{2}}}h(x)-2k \omega. $$
Furthermore,
\begin{align}
|M_{3}| &\leq |\frac{\omega}{1-kx_{\bot}}\frac{\partial L(t,0)}{\partial l}|x_{\bot}+|\sigma \frac{\partial L(t,0)}{\partial \omega}(t,0)|x_{\bot}\nonumber\\
 &\leq C(T)x_{\bot}.\nonumber
\end{align}
Therefore, we conclude the following estimate
\begin{align}
\Big|\frac{d \alpha}{d t}\Big|\leq C\|E\|_{L^{\infty}\big([0,T],C^{\frac{1}{2}}(\Omega)\big)}\|v_{\bot}\|x^{\frac{1}{2}}_{\bot}+C(T)x_{\bot},\nonumber
\end{align}
where $C$ depends only on $h$ and the geometric properties of $\partial\Omega$. Meanwhile,  $\alpha$ and $v^{2}_{\bot}+x_{\bot}$ are equivalent, so we have
\begin{align}
\Big|\frac{d \alpha}{d t}\Big|\leq C \alpha. \label{v4.12}
\end{align}
Thus we obtain
\begin{align}
C_{1}\alpha(0)\leq \alpha(t)\leq C_{2} \alpha(0), \nonumber
\end{align}
where $C_{1}, C_{2}$ depend  on
$T,f_{0},\|E\|_{L^{\infty}([0,T],C^{\frac{1}{2}}(\Omega))}$, that is, \eqref{v4.11} holds.
\end{proof}

\subsection{Well-posedness of the linear problem}

The primary focus of this subsection is to prove Theorem \ref{T2}. The fundamental approach involves integrating the linear equation along the characteristics. Nonetheless, it is essential to pay close attention to the regularity of characteristic curves in conjunction with reflection boundary conditions since characteristic curves frequently collide with boundaries  $\partial\Omega$.

\begin{proof}[Proof of Theorem \ref{T2}]
According to the characteristic equation \eqref{v4.1}-\eqref{v4.3},   we can define the following function,
\begin{align}
f(t,x,v)=f_{0}(X(0;t,x,v),V(0;t,x,v)).\label{v4.13}
\end{align}
We will show that this function is the solution to the linear problem satisfying Theorem \ref{T2}.

{\bf Step 1.} The characteristic curves determined by the  equations\eqref{v4.1}-\eqref{v4.3} for $(x,v)\in \Omega \times \mathbb{R}^{2}$  have two basic facts.
\smallskip

\noindent (1) The curves never intersect with the singular set.
  
  In fact, for the curves $(X(s;t,x,v),V(s;t,x,v)), 0\leq s\leq t$ with $(X(t;t,x,v)=x,V(t;t,x,v)$ $=v)\in \Omega \times \mathbb{R}^{2} $, one has 
  $\alpha(t)>0$. By Lemma \ref{L2}, $\alpha (s)$ maintains upper and lower bounds during the time period $[0, t]$, then it follows that $\alpha (s)> 0$ for any $s\in[0,t]$. 
  If the curves intersect with the singular set at some time  $s=s_{0}$, we have $\alpha(s_{0})=0$, which  contradicts with $\alpha (s_{0})> 0$.
  
  \smallskip
  
\noindent (2)  The curves intersect the boundary $\partial\Omega\times \mathbb{R}^{2}$ at most a finite number of times.
  
  {\em Case 1.} If the characteristics starts in the region $\alpha(0)\leq C\delta_{0}$,  by Lemma \ref{L2}, $\alpha (s)\leq C_{1}(T)\delta_{0}, \forall s\in [0,T]$ for some $C_{1}(T)$,  so because  $f_{0}$ is constant, we have $f= \text{constant}$.
  
  {\em Case 2.} If the characteristics starts in the region $\alpha(0)\geq C\delta_{0}$,  by Lemma \ref{L2}, $\alpha (s)\geq C_{2}(T)\delta_{0}, \forall s\in [0,T]$ for some $C_{2}(T)$,  we claim that $\Big|\frac{d \beta}{ d t}\Big|\leq \frac{C}{\sqrt{\delta_{0}}}$, and further that the upper bound of the number of collisions does not exceed $\frac{C}{\sqrt{\delta_{0}}}$.
  In fact, by \eqref{v4.10},
  \begin{align}
\frac{d \beta }{d t}=&-\frac{\pi}{\sqrt{2\alpha}}F(t,0)+\pi\frac{2x_{\bot}L(t,0)}{(2\alpha)^{3/2}}\big(F(t,x_{\bot})-F(t,0)\big)-\pi \frac{x_{\bot}v_{\bot}}{(2\alpha)^{\frac{3}{2}}}
 \Big(\sigma \frac{\partial L(t,0)}{\partial\omega}\nonumber\\
   &+\frac{1}{\sqrt{1+|v|^{2}}}\frac{\omega}{1-k x_{\bot}}
  \frac{\partial L(t,0)}{\partial l}+F(t,x_{\bot})\frac{v_{\bot}h(x)}{\sqrt{1+|v|^{2}}}\Big).\nonumber
 \end{align}
 By direct calculation, it follows that
 \begin{align}
&\Big|\frac{\pi}{\sqrt{2\alpha}}F(t,0)\Big|\leq \frac{C(T)}{\sqrt{2\alpha}},\nonumber\\
&\Big|\pi\frac{2x_{\bot}L(t,0)}{(2\alpha)^{3/2}}\big(F(t,x_{\bot})-F(t,0)\big)\Big|\leq \frac{C(T)}{\sqrt{2\alpha}},\nonumber\\
&\Big|\pi \frac{x_{\bot}v_{\bot}}{(2\alpha)^{\frac{3}{2}}}
 \Big(\frac{1}{\sqrt{1+|v|^{2}}}
   \frac{\omega}{1-k x_{\bot}}
  \frac{\partial L(t,0)}{\partial l}+\sigma \frac{\partial L(t,0)}{\partial\omega}+F(t,x_{\bot})\frac{v_{\bot}h(x)}{\sqrt{1+|v|^{2}}}\Big)\Big|
  \leq C(T).\nonumber
 \end{align}
Therefore, we have $\Big|\frac{d \beta}{ d t}\Big|\leq \frac{C}{\sqrt{\delta_{0}}}.$

Let the back-time cycle from $(t,x,v)$ be $$(t^{l},x^{l},v^{l})=(t,x,v),(t^{l-1},x^{l-1},v^{l-1}),...,(t^{1},x^{1},v^{1}), (0,x_{0},v_{0}).$$
By \eqref{v2.3}, we get
\begin{align}
&\frac{d x_{\bot}(s)}{d s}=\frac{v_{\bot}(s)}{\sqrt{1+|v|^{2}}},\nonumber\\
&\frac{d v_{\bot}(s)}{d s}= F(s),\nonumber
\end{align}
and moreover,
\begin{align}
v_{\bot}(s)&=v^{*}_{\bot}(t^{j})+\int^{s}_{t^{j}}F(\tau){\rm d}\tau, \,\, t^{j-1}< s<t^{j}, \nonumber\\
x_{\bot}(t^{j})&=x_{\bot}(t^{j-1})+\int^{t^{j}}_{t^{j-1}}\frac{v_{\bot}(s)}{\sqrt{1+|v|^{2}}}{\rm d}s\nonumber\\
&=x_{\bot}(t^{j-1})+\int^{t^{j}}_{t^{j-1}}\frac{v^{*}_{\bot}(t^{j})}{\sqrt{1+|v|^{2}}}{\rm d}s
+\int^{t^{j}}_{t^{j-1}}\int^{s}_{t^{j}}\frac{F(\tau)}{\sqrt{1+|v(s)|^{2}}}{\rm d}\tau{\rm d}s.\nonumber
\end{align}
Since $x_{\bot}(t^{j})=x_{\bot}(t^{j-1})=0$, we obtain
\begin{align}
&\Big|\int^{t^{j}}_{t^{j-1}}\frac{v^{*}_{\bot}(t^{j})}{\sqrt{1+|v|^{2}}}{\rm d}s\Big|
=\Big|\int^{t^{j}}_{t^{j-1}}\int^{s}_{t^{j}}\frac{F(\tau)}{\sqrt{1+|v(s)|^{2}}}{\rm d}\tau{\rm d}s\Big|,\nonumber\\
&\Big|\int^{t^{j}}_{t^{j-1}}\int^{s}_{t^{j}}\frac{F(\tau)}{\sqrt{1+|v(s)|^{2}}}{\rm d}\tau{\rm d}s\Big|\leq
C(T)\big(t^{j}-t^{j-1}\big)^{2},\nonumber\\
&\Big|\int^{t^{j}}_{t^{j-1}}\frac{v^{*}_{\bot}(t^{j})}{\sqrt{1+|v|^{2}}}{\rm d}s\Big|\geq C(T)|v^{*}_{\bot}(t^{j})||t^{j}-t^{j-1}|,\nonumber
\end{align}
also due to Lemma \ref{T2}, it follows that $|v^{2}_{\bot}(t^{i})|\geq C(T)\delta_{0}$,  then we get
$|t^{j}-t^{j-1}|\geq C(T)$, which implies that the number of collisions in the time period [0, T]  is finite.
According to $\Big|\frac{d \beta}{ d t}\Big|\leq \frac{C}{\sqrt{\delta_{0}}}$, we have
\begin{align}
&\beta(t^{j})-\beta(t^{j-1})\leq \frac{C}{\sqrt{\delta_{0}}}(t^{j}-t^{j-1}),\nonumber\\
&\beta(t^{j})=2\pi H(t^{j}_{+}),\,\,\beta(t^{j-1})=2\pi H(t^{j-1}_{+}),\nonumber\\
&H(t^{j}_{+})=H(t^{j-1}_{+})+1.\nonumber
\end{align}
As a result, we sum the above equalities over $j$ to obtain,
\begin{align}
2\pi l \leq \sum^{l}_{j=1}\frac{C}{\sqrt{\delta_{0}}}(t^{j}-t^{j-1})\leq \frac{C}{\sqrt{\delta_{0}}}T. \nonumber
\end{align}


{\bf Step 2.} We now analyze the regularity of the characteristic curves.\\
If the characteristic curves do not collide with the boundary, by $E \in C^{1,\mu}_{x}$, based on the classical regularity estimates for the solutions of ODEs, we can introduce the functions $X(s; t,x,v),V(s;t,x,v)$  that are $C^{1,\mu}$ with respect to the variables $(x,v)$.

If the characteristic curves intersect with the boundary $\partial\Omega $ at time $s=s(t,x,v)$, for simplicity, letting $s$  be the first collision time on  the back-time cycle from $(t,x,v)$ to $(0,x_{0},v_{0})$, then,  by \eqref{v4.1}-\eqref{v4.3}, one has
\begin{align}
&V(s^{+};t,x,v)=v+\int^{s}_{t}E(\tau, X(\tau;t,x,v)){\rm d}\tau,\nonumber\\
&X(s;t,x,v)=x+\int^{s}_{t}\frac{v}{\sqrt{1+|v(\tau)|^{2}}}{\rm d}\tau+\int^{s}_{t}\int^{\xi}_{t}\frac{E(\tau,X(\tau;t,x,v))}{\sqrt{1+|V(\xi)|^{2}}}{\rm d}\tau{\rm d}\xi,\nonumber\\
&V(s^{-};t,x,v)=V(s^{+};t,x,v)-2\big(V(s^{+};t,x,v)\cdot n\big)n.\nonumber
\end{align}
From the above equalities and the regular assumption on $\partial\Omega$, it follows that  $X(s; t,x,v)$ and $V(s;t,x,v)$   are also  $C^{1,\mu}_{(x,v)}$.
Thus, for the  characteristic curves of finite bounces, the functions $X(s; t,x,v)$ and $V(s;t,x,v)$  consist of a finite number of piecewise $C^{1,\mu}_{(x,v)}$ functions.
Hence, combined with \eqref{v4.13}, we have $ f \in C^{1,\mu}_{(x,v)}$.
 Finally, let's briefly summarize the above results as
\begin{itemize}
  \item [(1)]If $\alpha \leq C \delta_{0}$, $f\equiv \text{constant}$.
  \item [(2)]If $\alpha \geq C \delta_{0}$, $f \in C^{1,\mu}_{(x,v)}$.
  \item [(3)]$f_{t}=-v\cdot \nabla_{x}f-E\cdot\nabla_{v}f.$
\end{itemize}
 We therefore prove that $f(t,x,v) \in C^{1;1,\lambda}_{t,(x,v)}\big([0,T]\times\Omega\times\mathbb{R}^{2}\big)$ for some $0< \lambda< \mu$.

{\bf Step 3.} Uniqueness.

According to the theory of ODEs,
we know that the solutions of the characteristic equations $X(0; t,x,v),V(0;t,x,v)$ are unique, thus the function $f(t,x,v)=f_{0}(X(0;t,x,v), V(0;t,x,v))$
is also unique. Therefore, we have completed the proof of Theorem \ref{T2}.
\end{proof}

\section{On the Convergence of the Sequence \{$f^{n}$\}}

\subsection{The solution of the Poisson equation with Neumann boundary conditions}
In this subsection, we recall some results on the solution of the Poisson equation with the Neumann boundary conditions.
\begin{Proposition}[\!\!\cite{E}]\label{P1}
Given a bounded domain $\Omega \subset \mathbb{R}^{2}$ with a smooth boundary $\partial\Omega,$ for the following Poisson equation with Neumann boundary conditions:
\begin{align}
\triangle \varphi&=\rho (x),  x\in \Omega,  \label{v5.1}\\
\frac{\partial \varphi}{\partial n}&= h(x), x\in\partial\Omega, \label{v5.2}\\
\int_{\Omega}\rho(x){\rm d}x&=\int_{\partial\Omega}h(x){\rm d}l, \label{v5.3}
\end{align}
  there exists a Green's function $G(x,y)$ such that we have
\begin{align}
\varphi(x)=\int_{\Omega}G(x,y)\rho(y){\rm d}y-\int_{\partial\Omega}G(x,y)h(y){\rm d}l.\label{v5.4}
\end{align}
For the Green's function, we have the following estimates:
\begin{align}
|\nabla_{x}G(x,y)|\leq \frac{C}{|x-y|},\,\,|\nabla^{2}_{x}G(x,y)|\leq \frac{C}{|x-y|^{2}},\quad x,y\in\bar{\Omega}. \label{v5.5}
\end{align}
where $C$ depends only on the domain $\Omega$.
\end{Proposition}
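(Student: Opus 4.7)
The plan is to construct the Neumann Green's function as the two-dimensional fundamental solution of the Laplacian plus a smooth correction, recover the representation formula \eqref{v5.4} from Green's second identity, and then prove \eqref{v5.5} by isolating the explicit logarithmic singularity.

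First, I would set
\[
G(x,y)=\frac{1}{2\pi}\log|x-y|+H(x,y),
\]
and define $H(\cdot,y)$, for each fixed $y\in\Omega$, as the solution of the auxiliary Neumann problem
\[
\Delta_x H(x,y)=-\frac{1}{|\Omega|}\;\text{in }\Omega,\qquad \frac{\partial H}{\partial n_x}(x,y)=-\frac{1}{|\partial\Omega|}-\frac{1}{2\pi}\frac{\partial}{\partial n_x}\log|x-y|\;\text{on }\partial\Omega.
\]
The compatibility condition for this problem reduces to the classical identity $\int_{\partial\Omega}\partial_{n_x}\log|x-y|\,dl(x)=2\pi$ for $y\in\Omega$, so $H$ exists and is unique up to an additive constant, and symmetry $G(x,y)=G(y,x)$ follows from the usual integration-by-parts pairing of $G(\cdot,y)$ with $G(\cdot,z)$. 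With this choice, $G(\cdot,y)$ satisfies $\Delta_x G=\delta_y-|\Omega|^{-1}$ in $\Omega$ and $\partial_{n_x}G=-|\partial\Omega|^{-1}$ on $\partial\Omega$.

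Next, applying Green's second identity to $\varphi$ and $G(x,\cdot)$ on $\Omega\setminus B_\varepsilon(x)$ and taking $\varepsilon\downarrow 0$ would produce
\[
\varphi(x)=\int_\Omega G(x,y)\rho(y)\,dy-\int_{\partial\Omega}G(x,y)h(y)\,dl(y)+\frac{1}{|\Omega|}\int_\Omega\varphi\,dy-\frac{1}{|\partial\Omega|}\int_{\partial\Omega}\varphi\,dl,
\]
after substituting $\Delta\varphi=\rho$ and $\partial_n\varphi=h$. The last two terms are constants, and since the Neumann problem \eqref{v5.1}--\eqref{v5.3} determines $\varphi$ only up to an additive constant, they can be absorbed by the free constant of integration, giving \eqref{v5.4}.

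Finally, for the gradient estimates \eqref{v5.5}, the singular part is explicit: $|\nabla_x\log|x-y||=|x-y|^{-1}$ and $|\nabla_x^2\log|x-y||\le C|x-y|^{-2}$. The main obstacle is to show that $\nabla_x H$ and $\nabla_x^2 H$ are uniformly bounded on $\bar\Omega\times\bar\Omega$, where the difficulty concentrates when $y$ approaches $\partial\Omega$ and the Neumann data for $H$ itself becomes singular. I would handle this by covering $\bar\Omega$ with finitely many charts, flattening the boundary by a $C^{5}$ diffeomorphism in each chart, and reflecting $\log|x-y|$ across the flat piece of the boundary; the reflected pole then lies outside $\bar\Omega$, so its contribution to $\nabla_x H,\nabla_x^2 H$ stays bounded, and the residual Neumann data for $H$ becomes genuinely smooth. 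Schauder estimates up to the boundary, together with a covering/patching argument, then yield the required uniform bounds on the regular part, and combining with the singular part gives \eqref{v5.5}.
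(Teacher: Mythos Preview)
The paper does not prove this proposition: it is stated with a citation to Eidus \cite{E} and used as a black box, so there is no ``paper's own proof'' to compare against. Your outline is the standard construction and is essentially correct, but two points need fixing.

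First, your auxiliary problem for $H$ is incompatible: with $\Delta_x H=-|\Omega|^{-1}$ and $\partial_{n_x}H=-|\partial\Omega|^{-1}-\frac{1}{2\pi}\partial_{n_x}\log|x-y|$, the divergence theorem forces $-1=-2$. You have mixed the two standard normalizations; choose either $\Delta_x G=\delta_y-|\Omega|^{-1}$ with $\partial_{n_x}G=0$, or $\Delta_x G=\delta_y$ with $\partial_{n_x}G=|\partial\Omega|^{-1}$, and adjust $H$ accordingly. The representation formula then goes through exactly as you wrote.

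Second, the sentence ``the reflected pole then lies outside $\bar\Omega$, so its contribution to $\nabla_x H,\nabla_x^2 H$ stays bounded'' is not correct as stated. When $y$ is at distance $d$ from $\partial\Omega$, the reflected point $y^*$ is at distance $d$ on the other side, so for $x\in\bar\Omega$ near $y$ one has $|x-y^*|$ comparable to $|x-y|$, and $\nabla_x\log|x-y^*|$ is \emph{not} uniformly bounded. What saves the estimate is the half-space inequality $|x-y^*|\ge |x-y|$ for $x,y$ in the same half-space (preserved up to constants under the boundary-flattening chart), which gives $|\nabla_x\log|x-y^*||\le C|x-y|^{-1}$ directly. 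The residual term after subtracting both the fundamental solution and its reflection then has genuinely smooth Neumann data, and your Schauder argument applies to that piece. With these two corrections the proof is complete.
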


\begin{Proposition}[\!\!\cite{N}]\label{P2}
Assume that $\rho(x)\in C^{0,\alpha}(\bar{\Omega}), h(x)\in C^{1,\alpha}(\bar{\Omega})$ for $\alpha\in (0,1)$, then there exists
a solution $\varphi\in C^{2,\alpha}(\bar{\Omega})$ (unique up to an additive constant) to
the problem \eqref{v5.1}-\eqref{v5.3}, such that
\begin{align}
\Big\|\varphi-\frac{1}{|\Omega|}\int_{\Omega}\varphi\Big\|_{C^{2,\alpha}(\Omega)}\leq C\Big(\|\rho\|_{C^{0,\alpha}}+\|h\|_{C^{1,\alpha}}\Big), \label{v5.6}
\end{align}
with $C=C(\Omega,\alpha)$.
\end{Proposition}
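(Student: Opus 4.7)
The plan is to establish this classical Schauder-type result for the Neumann problem in three stages: existence and uniqueness modulo constants, interior and boundary $C^{2,\alpha}$ regularity, and finally the global estimate after normalization.

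First, I would obtain a weak solution via the variational formulation: find $\varphi \in H^1(\Omega)$ satisfying $\int_\Omega \nabla\varphi \cdot \nabla\psi\, {\rm d}x = -\int_\Omega \rho\,\psi\, {\rm d}x + \int_{\partial\Omega} h\,\psi\, {\rm d}l$ for every test function $\psi \in H^1(\Omega)$. The compatibility condition \eqref{v5.3} is exactly what is needed for the right-hand side to vanish on the kernel of the Dirichlet form (the constants), so by the Lax-Milgram theorem applied on the quotient space $H^1(\Omega)/\mathbb{R}$, a unique mean-zero weak solution exists. Any two solutions of \eqref{v5.1}-\eqref{v5.2} differ by a harmonic function with zero Neumann data, hence a constant, giving uniqueness up to an additive constant.

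Next, I would upgrade the regularity. On any $\Omega' \subset\subset \Omega$, the standard interior Schauder estimate for the Poisson equation yields $\|\varphi\|_{C^{2,\alpha}(\Omega')} \leq C(\|\rho\|_{C^{0,\alpha}(\Omega)} + \|\varphi\|_{L^\infty(\Omega)})$. Near $\partial\Omega$, I would use a local $C^{3,\alpha}$ chart to flatten the boundary (the smoothness of $\partial\Omega$ is assumed in the paper), so that $\Omega$ locally becomes a half-disk and the Neumann condition becomes an oblique derivative condition with $C^{1,\alpha}$ coefficients on the flat portion. The boundary Schauder estimate for the oblique derivative problem (e.g., Gilbarg-Trudinger, Chapter 6) then gives $C^{2,\alpha}$ control in each coordinate patch in terms of $\|\rho\|_{C^{0,\alpha}}$, $\|h\|_{C^{1,\alpha}}$, and $\|\varphi\|_{L^\infty}$, and a finite covering argument combines these local estimates into a global one.

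Finally, to eliminate the $\|\varphi\|_{L^\infty}$ term and arrive at \eqref{v5.6}, I would replace $\varphi$ by $\tilde\varphi = \varphi - \frac{1}{|\Omega|}\int_\Omega \varphi$. Since $\tilde\varphi$ has zero mean, the Poincaré-Wirtinger inequality controls $\|\tilde\varphi\|_{L^2}$ by $\|\nabla\tilde\varphi\|_{L^2}$; testing the weak formulation against $\tilde\varphi$ together with the trace inequality bounds $\|\nabla\tilde\varphi\|_{L^2}$ by $\|\rho\|_{L^2} + \|h\|_{L^2(\partial\Omega)}$, and Moser iteration (or Sobolev embedding, since $n=2$ makes the DeGiorgi-Nash-Moser argument particularly clean) converts this into an $L^\infty$ bound by the full $C^{0,\alpha}$ and $C^{1,\alpha}$ norms of the data. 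The main technical obstacle is the boundary Schauder estimate under the flattening map: one must carefully verify that the constants depend only on the $C^{2,\alpha}$ norm of the chart (hence only on $\Omega$) and that the transformed oblique derivative coefficients retain $C^{1,\alpha}$ regularity consistent with the hypothesis on $h$, so that the final constant in \eqref{v5.6} depends only on $\Omega$ and $\alpha$.
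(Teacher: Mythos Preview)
The paper does not supply its own proof of this proposition; it is quoted directly from Nardi \cite{N} as a known Schauder estimate for the Neumann problem and used as a black box. Your outline---weak existence via Lax--Milgram on $H^1(\Omega)/\mathbb{R}$ using the compatibility condition \eqref{v5.3}, interior and boundary Schauder estimates after a $C^{2,\alpha}$ flattening, and absorption of the lower-order $\|\varphi\|_{L^\infty}$ term via Poincar\'e--Wirtinger plus a Moser/De Giorgi iteration on the mean-zero normalization---is a correct and standard route to the result, consistent with what the cited reference carries out.
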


\subsection{The iterative sequence \{$f^{n}$\} is globally defined in time}
For the solution  $f(t,x,v)\in C^{1;1,\lambda}_{t;(x,v)}\big([0,T]\times\Omega\times\mathbb{R}^{2}\big)$ of  the linear problem, we need to proceed further to estimate
$E=\nabla \varphi,\, \rho(x)=\int_{\mathbb{R}^{2}}f(t,x,v){\rm d}v, \,F(t,x)$ and so on. Let's start with some notations.
For a function $g:\Omega\rightarrow \mathbb{R}$, we define the seminorm $[\cdot]_{0,\lambda;x}$ as
$$[g]_{0,\lambda;x}\equiv\sup_{x,y\in\Omega}\frac{|g(x)-g(y)|}{|x-y|^{\lambda}}.$$
For the function $f(t,x,v)$, the support of $|v|$ in the time period $[0,t]$ is defined as
\begin{align}
 Q(t)=\sup\{|v|: \,(x,v)\in \text{supp } f(s), \; 0\leq s\leq t\}.\label{v5.7}
\end{align}
We have the following proposition.

\begin{Proposition}\label{P3}
Assume that $\varphi$ satisfies \eqref{v1.6},\eqref{v1.7} and $f(t,x,v)\in C^{1;1,\lambda}_{t;(x,v)}\big([0,T] \times\Omega\times\mathbb{R}^{2}\big)$, $E$ satisfies the regularity hypothesis in Theorem \ref{T2}. Then we have the following estimates,
\begin{align}
|\rho(t,x)|&\leq C(T)\|f\|_{C^{1;1,\lambda}_{t;(x,v)}\big([0,T]\times\Omega\times\mathbb{R}^{2}\big)},\,\,(t,x)\in[0,T]\times\Omega,\label{v5.8}\\
|\nabla\rho(t,x)|&\leq\int_{\mathbb{R}^{2}}|\nabla _{x}f(t,x,v)|{\rm d}v \nonumber\\
&\leq C(T)\|f\|_{C^{1;1,\lambda}_{t;(x,v)}\big([0,T]\times\Omega\times\mathbb{R}^{2}\big)},\,\,(t,x)\in[0,T]\times\Omega,\label{v5.9}\\
|\rho_{t}(t,x)|&\leq C(T)\|f\|_{C^{1;1,\lambda}_{t;(x,v)}\big([0,T]\times\Omega\times\mathbb{R}^{2}\big)},\,\,(t,x)\in[0,T]\times\Omega,\label{v5.10}\\
|F(t,x)|&\leq C(T),\quad (t,x)\in[0,T]\times\partial\Omega,\label{v5.11}\\
|E(t,x)|&\leq C(T)\Big(\|f\|_{C^{1;1,\lambda}_{t;(x,v)}\big([0,T]\times\Omega\times\mathbb{R}^{2}\big)}+1\Big),\,\,(t,x)\in[0,T]\times\Omega,\label{v5.12}\\
|\nabla E(t,x)|&+[\nabla E(t,\cdot)]_{0,\lambda;x}+|\nabla^{2}E(t,x)|+[\nabla^{2} E(t,\cdot)]_{0,\lambda;x}\nonumber\\
&\leq C(T)\Big(\|f\|_{C^{1;1,\lambda}_{t;(x,v)}\big([0,T]\times\Omega\times\mathbb{R}^{2}\big)}+1\Big),\,\,(t,x)\in[0,T]\times\Omega,\label{v5.13}\\
|E_{t}(t,x)|&\leq C(T)\|f\|_{C^{1;1,\lambda}_{t;(x,v)}\big([0,T]\times\Omega\times\mathbb{R}^{2}\big)},\,\,(t,x)\in[0,T]\times\Omega.\label{v5.14}
\end{align}
\end{Proposition}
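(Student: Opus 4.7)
The seven estimates are handled essentially in the listed order. Throughout, the constant $C(T)$ is allowed to absorb the uniform bound $Q(T)$ on the $v$-support of $f$, which is finite since $f$ is assumed to have compact support in $v$.

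\emph{Bounds on $\rho$ and $F$.} For \eqref{v5.8}--\eqref{v5.10}, compactness of the $v$-support lets us differentiate under the integral sign,
\begin{align}
|\rho(t,x)| \leq \pi Q(T)^{2}\|f\|_{L^{\infty}}, \quad |\nabla_{x}\rho(t,x)| \leq \pi Q(T)^{2}\|\nabla_{x} f\|_{L^{\infty}}, \quad |\rho_{t}(t,x)| \leq \pi Q(T)^{2}\|f_{t}\|_{L^{\infty}}, \nonumber
\end{align}
each right-hand side being controlled by $\|f\|_{C^{1;1,\lambda}_{t;(x,v)}}$. For \eqref{v5.11}, substituting $x_{\bot}=0$ and $E_{\bot}=-h$ into \eqref{v2.5} gives $F|_{\partial\Omega}=-h-k\omega^{2}/\sqrt{1+|v|^{2}}$, and since $\omega^{2}/\sqrt{1+|v|^{2}}\leq|v|\leq Q(T)$, the bound is independent of $\|f\|$.

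\emph{Bounds on $E$.} The $L^{\infty}$-bound \eqref{v5.12} follows from the Green's function representation of Proposition \ref{P1},
\begin{align}
E(t,x)=\int_{\Omega}\nabla_{x} G(x,y)\rho(t,y)\,{\rm d}y-\int_{\partial\Omega}\nabla_{x} G(x,y)h(y)\,{\rm d}l_{y}, \nonumber
\end{align}
together with $|\nabla_{x} G|\leq C/|x-y|$ from \eqref{v5.5}; in two dimensions $\int_{\Omega}|x-y|^{-1}\,{\rm d}y<\infty$, so $|E|\leq C(\|\rho\|_{L^{\infty}}+\|h\|_{L^{\infty}})\leq C(T)(\|f\|+1)$. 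For the higher-order estimate \eqref{v5.13}, the kernel $\nabla^{2}_{x} G$ is only $|x-y|^{-2}$ and therefore borderline non-integrable, so we instead invoke Proposition \ref{P2}. Because $\nabla_{x} f\in C^{0,\lambda}_{(x,v)}$ and $f$ is compactly supported in $v$, $v$-integration yields $\rho\in C^{1,\lambda}(\bar{\Omega})$ with $\|\rho\|_{C^{1,\lambda}}\leq C(T)\|f\|$; since in addition $h\in C^{2,\mu}(\partial\Omega)\subset C^{2,\lambda}$, we apply Proposition \ref{P2} to the tangentially-differentiated Neumann problem for $\partial_{i}\varphi$ to conclude $\varphi\in C^{3,\lambda}(\bar{\Omega})$. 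This delivers the uniform control of $\nabla E$, $\nabla^{2}E$ and both of their H\"older seminorms.

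\emph{Bound on $E_{t}$ and main obstacle.} Differentiating \eqref{v1.6}--\eqref{v1.7} in $t$ produces $\Delta\varphi_{t}=\rho_{t}$ with $\partial\varphi_{t}/\partial n=h_{t}=0$; the compatibility condition $\int_{\Omega}\rho_{t}\,{\rm d}x=0$ is conservation of total mass, obtained by integrating \eqref{v1.5} over $v$ and using \eqref{v1.9} to annihilate the boundary flux through the $v\mapsto v^{*}$ symmetry. The representation $\varphi_{t}(x)=\int G(x,y)\rho_{t}(y)\,{\rm d}y$ combined with \eqref{v5.10} then gives \eqref{v5.14}. The genuinely delicate step is the second-derivative H\"older control in \eqref{v5.13}: one must verify carefully that $v$-integration preserves the H\"older seminorm of $\nabla_{x} f$ (which relies on the joint H\"older regularity of $\nabla_{x} f$ in $(x,v)$ together with compactness of the $v$-support) and that the differentiated Neumann problem for $\partial_{i}\varphi$ still satisfies the Fredholm solvability condition needed to apply Proposition \ref{P2}; the remaining estimates are soft once the $\rho$-bounds and the Green's function formula are secured.
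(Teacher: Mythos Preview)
Your proposal is correct and follows essentially the same approach as the paper. The only noteworthy difference is cosmetic: for the $L^{\infty}$ bounds \eqref{v5.12} and \eqref{v5.14} you go through the Green's-function representation of Proposition~\ref{P1} (using $|\nabla_{x}G|\le C|x-y|^{-1}$ and its integrability in two dimensions), whereas the paper simply quotes the Schauder estimate \eqref{v5.6} of Proposition~\ref{P2} throughout for all of the $E$-bounds, including the $L^{\infty}$ ones. Your route is slightly more explicit about the compatibility condition $\int_{\Omega}\rho_{t}\,{\rm d}x=0$ needed for the time-differentiated Neumann problem, which the paper leaves implicit; conversely, the paper sidesteps your worry about the ``tangentially-differentiated Neumann problem'' by invoking the standard higher-order Schauder bootstrap (if $\rho\in C^{1,\lambda}$ and $h\in C^{2,\lambda}$ then $\varphi\in C^{3,\lambda}$) directly rather than literally differentiating the boundary condition. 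Both arguments rest on the same two ingredients---compactness of the $v$-support to control $\rho$ and its derivatives, and elliptic regularity for the Neumann problem---and neither requires any idea beyond these.
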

\begin{proof}
Firstly, we give the estimates about $\rho(t,x)$ as follows:
\begin{align}
|\rho(t,x)|&= \Big|\int_{\mathbb{R}^{2}}f(t,x,v){\rm d}v\Big|\leq \|f\|_{L^{\infty}}Q^{2}(T)\nonumber\\
&\leq C(T)\|f\|_{C^{1;1,\lambda}_{t;(x,v)}\big([0,T]\times\Omega\times\mathbb{R}^{2}\big)},\nonumber\\
|\nabla \rho(t,x)|&= \Big|\int_{\mathbb{R}^{2}}\nabla f(t,x,v){\rm d}v\Big|\leq \|\nabla f\|_{L^{\infty}}Q^{2}(T)\nonumber\\
&\leq C(T)\|f\|_{C^{1;1,\lambda}_{t;(x,v)}\big([0,T]\times\Omega\times\mathbb{R}^{2}\big)},\nonumber\\
|\rho_{t}(t,x)|&= \Big|\int_{\mathbb{R}^{2}}f_{t}(t,x,v){\rm d}v\Big|\leq \|f_{t}\|_{L^{\infty}}Q^{2}(T)\nonumber\\
&\leq C(T)\|f\|_{C^{1;1,\lambda}_{t;(x,v)}\big([0,T]\times\Omega\times\mathbb{R}^{2}\big)},\nonumber
\end{align}
thus, \eqref{v5.8}-\eqref{v5.10} hold.

Sencondly, since $x\in\partial\Omega$,\,\,i.e. $x_{\bot}=0$, then
\begin{align}
|F(t,x)|=|-h(x)-\frac{k \omega^{2}}{\sqrt{1+|v|^{2}}}|
\leq C(T), \nonumber
\end{align}
so, \eqref{v5.11} is true.

Finally, we deal with $E(t,x)$. Due to \eqref{v5.6}, and $E=\nabla \varphi$,
\begin{align}
|E(t,x)|+|\nabla E(t,x)|+[\nabla E(t,\cdot)]_{0,\lambda;x}&\leq  C\Big(\|\rho\|_{C^{0,\lambda}}+\|h\|_{C^{1,\lambda}}\Big)\nonumber\\
&\leq  C(T)\Big(\|f\|_{C^{1;1,\lambda}_{t;(x,v)}\big([0,T]\times\Omega\times\mathbb{R}^{2}\big)}+1\Big).\nonumber
\end{align}
Since $f(t,x,v)\in C^{1;1,\lambda}_{t;(x,v)}\big([0,T] \times\Omega\times\mathbb{R}^{2}\big)$ and $E=\nabla\varphi$, it follows that
$\nabla \rho \in  C^{0,\lambda}(\Omega)$ and $E$ satisfies \eqref{v1.6}-\eqref{v1.7}. Once again applying \eqref{v5.6}, we obtain
\begin{align}
|\nabla^{2}E(t,x)|+[\nabla^{2} E(t,\cdot)]_{0,\lambda;x}&\leq  C\Big(\|\nabla \rho\|_{C^{0,\lambda}}+\|\nabla h\|_{C^{1,\lambda}}\Big)\nonumber\\
&\leq  C(T)\Big(\|f\|_{C^{1;1,\lambda}_{t;(x,v)}\big([0,T]\times\Omega\times\mathbb{R}^{2}\big)}+1\Big).\nonumber
\end{align}
According to the assumptions on $\varphi$, the function $\varphi_{t}$ satisfies
\begin{align}
&\Delta\varphi_{t}=\rho_{t}, \,\,x\in \Omega,t>0, \nonumber\\
&\frac{\partial \varphi_{t}}{\partial n_{x}}=0,\,\, x\in \partial \Omega,t>0,\nonumber
\end{align}
and by \eqref{v5.6},
$$|E_{t}(t,x)|\leq\|\rho_{t}\|_{C^{0,\lambda}} \leq C(T)\|f\|_{C^{1;1,\lambda}_{t;(x,v)}\big([0,T]\times\Omega\times\mathbb{R}^{2}\big)}.$$
Then  the inequalities \eqref{v5.12}-\eqref{v5.14} follow.
\end{proof}
\begin{Proposition}\label{P4}
Let  $ 0<\lambda<\mu$,
 suppose that $f_{0}\in C^{1,\mu}_{0}(\bar{\Omega}\times \mathbb{R}^{2}), f_{0}\geq 0$ satisfies \eqref{v2.8} and $h \in C^{1,\mu}(\partial\Omega),\,\,h(x)>0$.
Then the  iterative sequence $f^{n}$ is globally defined for each $x\in \Omega,\,v\in\mathbb{R}^{2},\,0\leq t<\infty $.
 Moreover,  the function $f^{n}(t,x,v)\in C^{1;1,\lambda}_{t;(x,v)}\big([0,T] \times\Omega\times\mathbb{R}^{2}\big)$  for $\forall\, T >0$ satisfies
\begin{align}
\|f^{n}\|_{L^{\infty}}&=\|f_{0}\|_{L^{\infty}} ,\label{v5.15}\\
\int \rho^{n}(t,x){\rm d}x&=\int f_{0}(x,v){\rm d}x{\rm d}v.  \label{v5.16}
\end{align}
\end{Proposition}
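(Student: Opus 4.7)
The plan is to establish Proposition \ref{P4} by induction on $n$, with Theorem \ref{T2} and Propositions \ref{P1}--\ref{P3} providing the building blocks at each step. The base case $n=0$ is immediate since $f^{0}\equiv f_{0}\in C^{1,\mu}_{0}(\bar{\Omega}\times \mathbb{R}^{2})$ satisfies \eqref{v5.15}--\eqref{v5.16} trivially, the latter being exactly the compatibility hypothesis \eqref{v1.12}.

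For the inductive step, assume that $f^{n-1}$ is globally defined, lies in $C^{1;1,\lambda}_{t;(x,v)}([0,T]\times\Omega\times\mathbb{R}^{2})$ for every $T>0$, and that \eqref{v5.15}--\eqref{v5.16} hold at level $n-1$. I would first read off the data for the Poisson problem \eqref{v3.3}--\eqref{v3.4}: Proposition \ref{P3} gives $\rho^{n-1}(t,\cdot)\in C^{0,\lambda}(\bar{\Omega})$ with bounds uniform on each $[0,T]$, while \eqref{v5.16} for $n-1$ combined with \eqref{v1.12} provides the Neumann compatibility $\int_{\Omega}\rho^{n-1}\,{\rm d}x=\int_{\partial\Omega}h\,{\rm d}l$. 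Propositions \ref{P1}--\ref{P2} then produce a solution $\varphi^{n-1}$ of \eqref{v3.3}--\eqref{v3.4}, unique up to an additive constant, with $E^{n-1}:=\nabla\varphi^{n-1}\in C^{0;1,\lambda}_{t;x}([0,T]\times\bar{\Omega})$ and $E^{n-1}\cdot n_{x}=h>0$ on $\partial\Omega$ by construction.

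With this field in hand, Theorem \ref{T2} applied to the linear problem \eqref{v3.2},\eqref{v3.5},\eqref{v3.6} yields a unique $f^{n}\in C^{1;1,\lambda}_{t;(x,v)}([0,T]\times\Omega\times\mathbb{R}^{2})$, and since $T$ is arbitrary (and distinct $T$'s are matched by the uniqueness clause of Theorem \ref{T2}) the function $f^{n}$ is globally defined. The identity $\|f^{n}\|_{L^{\infty}}=\|f_{0}\|_{L^{\infty}}$ is then a direct consequence of the characteristic representation \eqref{v4.13}, which transports $f_{0}$ unchanged along trajectories. Conservation \eqref{v5.16} follows by integrating \eqref{v3.2} over $\Omega\times\mathbb{R}^{2}$ and integrating by parts: the $E^{n-1}\cdot\nabla_{v}f^{n}$ contribution vanishes by the compact $v$-support of $f^{n}$ coming from \eqref{v4.13}, while the $\hat{v}\cdot\nabla_{x}f^{n}$ contribution reduces to a boundary integral $\int_{\partial\Omega}\int_{\mathbb{R}^{2}}(\hat{v}\cdot n_{x})\,f^{n}\,{\rm d}v\,{\rm d}l$ which vanishes because the specular involution $v\mapsto v^{*}$ preserves $|v|$ (hence $|\hat v|$), flips the sign of $\hat v\cdot n_{x}$, and leaves $f^{n}$ invariant by \eqref{v3.6}.

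The main obstacle I anticipate is bookkeeping rather than genuine analytic difficulty: one must verify that the H\"older exponents align throughout the iteration (fixing $\lambda<\mu$ once and checking that Theorem \ref{T2} can be reapplied with the same exponent produced by Proposition \ref{P2}), and that the Neumann compatibility is genuinely propagated from one step to the next through \eqref{v5.16}, with the specular reflection condition being the sole mechanism killing the boundary flux in the mass-conservation identity. None of these is conceptual; the truly difficult work, namely the uniform-in-$n$ estimates that will permit passing $n\to\infty$, is deferred to the subsequent sections controlling $Q(t)$.
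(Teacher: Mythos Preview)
Your proposal is correct and follows essentially the same inductive scheme as the paper: at each step one feeds $\rho^{n-1}$ into the Neumann problem via Propositions \ref{P1}--\ref{P2} (with compatibility furnished by \eqref{v5.16} at level $n-1$), obtains $E^{n-1}$ with the regularity and sign condition required by Theorem \ref{T2}, and then applies Theorem \ref{T2} to produce $f^{n}$, with \eqref{v5.15} read off from the characteristic formula \eqref{v4.13} and \eqref{v5.16} from integrating \eqref{v3.2}. The paper's write-up differs only in presentation---it makes the velocity-support bound $|V(s)|\le C(T)(1+s)$ explicit at the first two steps before invoking the general pattern, whereas you fold this into the appeal to \eqref{v4.13}---and your treatment of the vanishing boundary flux via the specular symmetry is in fact more detailed than the paper's one-line justification.
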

\begin{proof}
We use induction to prove this proposition.

{\bf Step 1.} If $n=1$, we estimate the support of the function $f^{1}$.
According to \eqref{v5.12},we have
\begin{align}
\Big|\frac{d V}{d s}\Big|=|E^{0}|=|\nabla \varphi^{0}|\leq C(T)\Big(\|f_{0}\|_{C^{1,\mu}}+1\Big)\leq C(T). \nonumber
\end{align}
Thus,
\begin{align}
|V(s)|\leq C(T)(1+s), \,\, i.e. \, |v|\leq C(T)(1+t). \nonumber
\end{align}
Then  we have the following estimate on $\rho^{1}(t,x)$,
\begin{align}
|\rho^{1}(t,x)|=\Big|\int_{\mathbb{R}^{2}}f^{1}(t,x,v){\rm d}v\Big|\leq \|f^{1}\|_{L^{\infty}} C(T)(1+t)^{2}\leq C(T)(1+t)^{2}.\nonumber
\end{align}
 In the light of $f_{0}\in C^{1,\mu}_{0}(\bar{\Omega}\times \mathbb{R}^{2})$ as well as Proposition \ref{P3}
which imply $E^{0}\in C^{1;1,\mu}_{t,x}([0,T]\times \bar{\Omega})$, using Therorem\ref{T2}, we  obtain $f^{1}\in C^{1;1,\mu}_{t,(x,v)}([0,T]\times\bar{\Omega}\times\mathbb{R}^{2})$ and
$$ \int \rho^{1}(t,x){\rm d}x=\int f_{0}(x,v){\rm d}x{\rm d}v.$$

{\bf Step 2.} If $n=2$,
by means of Proposition \ref{P3} and the boundedness of the domain $\Omega$ , it follows that
$$\|E^{1}\|_{C^{1;1,\mu}_{t,x}([0,T]\times \bar{\Omega})}\leq C(T)\Big(\|f^{1}\|_{C^{1;1,\lambda}_{t,(x,v)}}+1\Big).$$
Applying Theorem \ref{T2}, we can obtain $f^{2}$ is well defined in $C^{1,\lambda}_{x,v}$ for $ t\in [0,+\infty).$
Furthermore, based on the estimates of $\rho^{1}(t,x)$ and  the explicit formula of the Poisson equation \eqref{v5.4}, 
by direct calculation we see that $|E^{1}(t,x)|$  can be controlled by some function $g(t)$ which is a continuous increasing function with respect to t.
From the characteristic equation, we can determine the support set of $f^{2}$ on velocity v.
Applying Theorem \ref{T2} again, we infer that $f^{2}$ is globally defined for each $x\in \Omega,\,v\in\mathbb{R}^{2},\,0\leq t<\infty $,
and the function $f^{2}(t,x,v)\in C^{1;1,\lambda}_{t;(x,v)}\big([0,T] \times\Omega\times\mathbb{R}^{2}\big)$  is bounded.
By repeating the above argument, we can conclude that there is also the same conclusion for the sequence $f^{n}$.

{\bf Step 3.}
Because  $f^{n}$ is defined based on the propagation of characteristics, it follows that $\|f^{n}\|_{L^{\infty}}=\|f_{0}\|_{L^{\infty}}$, i.e., \eqref{v5.16} holds.
The  equality \eqref{v5.16} can be obtained by integrating the equation \eqref{v3.2} over the space $\Omega\times\mathbb{R}^{2}$ with regard to the variables $(x,v)$.
 We complete the proof.
\end{proof}

\subsection{Convergence of the iterative sequences  \{$f^{n}$\} }
 Under the condition that  the support of  $f^{n}(t,x,v)$ with respect to $v$ is uniformly bounded, the iterative sequence $f^{n}(t,x,v)$ will converge to
 the solution of the RVP system. We denote
 the support of  $f^{n}$ on $v$  as $Q^{n}$,
\begin{align}
 Q^{n}(t)=\sup\{|v|: \,(x,v)\in \text{supp } f^{n}(s), 0\leq s\leq t\}.\label{v5.17}
\end{align}

We start with several auxiliary lemmas.
Let's first introduce the measure preservation of characteristic curves.
\begin{Lemma}\label{L3}
Suppose that $X(s;t,x,v),\, V(s;t,x,v)$ are  the characteristic curves,
the following transformation is  symplectic and preserves the measure, that is
\begin{align}
&(x,t)\rightarrow (X(s;t,x,v), V(s;t,x,v)), \nonumber\\
&{\rm d}X(s;t,x,v){\rm d}V(s;t,x,v)={\rm d}x{\rm d}v.\nonumber
\end{align}
\end{Lemma}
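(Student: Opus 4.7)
The plan is to recognize that the characteristic flow \eqref{v4.1}--\eqref{v4.2} is Hamiltonian with Hamiltonian
\[
H(t,x,v) \;=\; \sqrt{1+|v|^{2}} \,-\, \varphi(t,x).
\]
A direct check gives $\partial_{v_i}H = v_i/\sqrt{1+|v|^2} = \hat{v}_i$ and $-\partial_{x_i}H = \partial_{x_i}\varphi = E_i$, so the characteristic ODEs are precisely Hamilton's equations for $H$ on $\Omega\times\mathbb{R}^{2}$. The symplectic character of Hamiltonian flows then yields, on each interval during which the trajectory stays in the interior, both the preservation of the canonical symplectic form $\sum_i dv_i\wedge dx_i$ (this is the ``symplectic'' claim) and, as a consequence, the preservation of the volume form ${\rm d}x\,{\rm d}v$.

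For the measure-preserving half I would also give a direct verification via Liouville's theorem, since it is short and does not use any background on symplectic geometry. The phase-space velocity field $\big(\hat{v},\,E(t,x)\big)$ is divergence-free in $(x,v)$: the term $\hat{v}$ depends only on $v$, so contributes nothing to $\operatorname{div}_x$, while $E=\nabla_x\varphi$ depends only on $(t,x)$, so contributes nothing to $\operatorname{div}_v$. Consequently the Jacobian $\det D_{(x,v)}(X,V)(s;t,\cdot,\cdot)$ solves a trivial linear ODE in $s$ and equals its value at $s=t$, namely one; this delivers ${\rm d}X\,{\rm d}V = {\rm d}x\,{\rm d}v$ for free-flight pieces of the trajectory.

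The step needing the most care, and what I expect to be the main obstacle, is to incorporate the specular reflections \eqref{v4.4} into this argument. At each collision instant $s^{\ast}$, $X$ is unchanged while $V$ is replaced by $v^{\ast}=v-2(n_x\cdot v)n_x$, which is an orthogonal involution on $\mathbb{R}^{2}$ and therefore has Jacobian of absolute value one; so the $(x,v)$-Jacobian across any individual bounce is $\pm 1$. I would then glue the interior Hamiltonian pieces to these instantaneous reflections along the back-time cycle $(t^l,x^l,v^l)\to\dots\to(0,x_0,v_0)$ constructed in Step~1 of the proof of Theorem~\ref{T2}. The legitimacy of this piecewise concatenation near the boundary is underwritten by Lemma~\ref{L2}, which keeps the trajectory uniformly away from the singular set $\Gamma$ and bounds the number of collisions on any finite time interval, so that $|\det|=1$ propagates bounce by bounce and the entire transformation preserves Lebesgue measure as claimed.
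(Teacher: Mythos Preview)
Your proposal is correct and follows the same approach as the paper: the paper's own proof consists of a single sentence noting that the characteristic equations form a Hamiltonian system and referring the reader to \cite{HV}, Lemma~7, for details. Your argument fills in precisely those details---identifying the relativistic Hamiltonian $H=\sqrt{1+|v|^{2}}-\varphi$, invoking Liouville's theorem on the free-flight pieces, and handling the specular reflections via the fact that $v\mapsto v^{*}$ has Jacobian of absolute value one---so it is in fact more complete than what the paper itself provides.
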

\begin{proof}
The proof of this lemma  is based on the characteristic equation being a Hamiltonian system and we omit the details here (cf. \cite{HV}, Lemma 7).
\end{proof}

Next, we provide the uniform estimates for $f^{n}, E^{n}$.

\begin{Lemma}\label{L4}
Under the assumptions of Theorem \ref{T1}, and assumping that there exists $n_{0}$ such that  $Q^{n}(t) \leq M$  for $n\geq n_{0}, 0\leq t\leq T$, we obtain that, 
for  $n\geq n_{0}+1, 0\leq t\leq T$, 
\begin{align}
&|E^{n}(t,x)|\leq C(T), \label{v5.18}\\
&|E^{n}(t,\cdot)|_{C^{\gamma}(\bar{\Omega})}\leq C(T),\,\, for\,\, any\,\, 0<\gamma <1, \label{v5.19}
\end{align}
where $C(T)$ depends only on $M,T,\|f_{0}\|_{L^{\infty}(\Omega\times\mathbb{R}^{2})}$.
\end{Lemma}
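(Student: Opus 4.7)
The plan is to derive both \eqref{v5.18} and \eqref{v5.19} from a single uniform $L^\infty$ bound on $\rho^n$ combined with $L^p$-elliptic theory for the Neumann problem. The central point is that the hypothesis $Q^n(t)\leq M$ provides just enough information to control $\rho^n$ without touching any derivative of $f^n$.

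First I would observe that, by Proposition \ref{P4}, $\|f^n\|_{L^\infty(\Omega\times\mathbb{R}^{2})} = \|f_0\|_{L^\infty(\Omega\times\mathbb{R}^{2})}$, since transport along characteristics with specular reflection preserves the $L^\infty$ norm. Combined with the assumption $Q^n(t)\leq M$, which forces $f^n(t,x,v)=0$ for $|v|>M$, this yields
\begin{align}
|\rho^n(t,x)| = \left|\int_{|v|\leq M} f^n(t,x,v)\,{\rm d}v\right| \leq \pi M^{2}\|f_0\|_{L^\infty},
\end{align}
uniformly in $n\geq n_0+1$, $t\in[0,T]$ and $x\in\Omega$. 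The crucial feature is that this bound depends only on $M$ and $\|f_0\|_{L^\infty}$.

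Next I would apply the $L^p$ Calder\'on--Zygmund estimate for the Neumann problem $\Delta \varphi^n = \rho^n$, $\partial_{n_x}\varphi^n = h$. Since $\Omega$ is a bounded $C^5$ domain, $L^\infty(\Omega)\subset L^p(\Omega)$ for every finite $p$, and standard theory yields, for any $p>2$ and after normalizing by the mean,
\begin{align}
\Bigl\|\varphi^n - \frac{1}{|\Omega|}\int_{\Omega}\varphi^n\Bigr\|_{W^{2,p}(\Omega)} \leq C(p,\Omega)\bigl(\|\rho^n\|_{L^p(\Omega)} + \|h\|_{W^{1-1/p,p}(\partial\Omega)}\bigr).
\end{align}
Taking the gradient eliminates the additive constant, so $\|E^n\|_{W^{1,p}(\Omega)}\leq C(T,p)$ uniformly in $n$. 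Morrey's embedding in dimension two then gives $W^{1,p}(\Omega)\hookrightarrow C^{0,1-2/p}(\bar\Omega)$, so that $E^n\in C^{0,1-2/p}(\bar\Omega)$ with the same bound. Given $\gamma\in(0,1)$, choosing $p>2/(1-\gamma)$ produces simultaneously \eqref{v5.18} and \eqref{v5.19}.

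The main subtlety worth flagging is the uniformity of the constants in $n$: none may depend on any quantity beyond $M$, $T$, $\|f_0\|_{L^\infty}$ and the fixed data $(h,\Omega)$. This is precisely why one cannot simply quote the Schauder estimate of Proposition \ref{P2} (as was done in Proposition \ref{P3}), since that route requires H\"older control on $\rho^n$, and hence on $\nabla_x f^n$, which is not available at this stage. Replacing the $C^{0,\alpha}$-Schauder theory by $L^p$-regularity followed by Morrey embedding bypasses this obstruction, at the mild cost of losing the $\gamma=1$ endpoint, which is allowed by the statement.
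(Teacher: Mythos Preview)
Your proof is correct and follows essentially the same route as the paper: bound $\rho^n$ in $L^\infty$ via $Q^n\leq M$ and $\|f^n\|_{L^\infty}=\|f_0\|_{L^\infty}$, then upgrade to $E^n\in C^\gamma$ through $L^p$ elliptic regularity and Sobolev/Morrey embedding. The only cosmetic difference is that the paper first obtains $|E^n|\leq C(T)$ from the Green's function representation \eqref{v5.4}--\eqref{v5.5} and then applies a div--curl $W^{1,p}$ estimate to $E^n$, whereas you invoke a single $W^{2,p}$ Neumann estimate on $\varphi^n$; your remark on why Schauder theory is unavailable here is also to the point.
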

\begin{proof}
On the one hand, by the definition of $\rho^{n}$, we have
\begin{align}
|\rho^{n}(t,x)|\leq \|f^{n}\|_{L^{\infty}}\big(Q^{n}(t)\big)^{2}\leq M^{2}\|f_{0}\|_{L^{\infty}}. \nonumber
\end{align}
From the representation formula for the solutions of the Poisson equation, we can see that \eqref{v5.18} holds.

On the other hand, since $\dv E^{n}=\rho^{n}, \,\,\cl E^{n}=0,\,\, E^{n} \cdot n =h(x)$, we have the following estimate, for any $1< p< \infty$,
\begin{align}
\|E^{n}\|_{W^{1.p}}&\leq C\Big(\|\dv E^{n}\|_{L^{p}}+\|\cl E^{n}\|_{L^{p}}+\|E^{n}\cdot n\|_{L^{p}}+\|E^{n}\|_{L^{p}}\Big)\nonumber\\
& \leq C\Big(\|\rho^{n}\|_{L^{p}}+\|h(x)\|_{L^{p}}+\|E^{n}\|_{L^{p}}\Big)\nonumber\\
&\leq  C\Big(\|\rho^{n}\|_{L^{\infty}}+\|h(x)\|_{L^{p}}+\|E^{n}\|_{L^{\infty}}\Big)\nonumber\\
&\leq C(T),\nonumber
\end{align}
and then, according to the Sobolev embedding inequality, we conclude that $|E^{n}(t,\cdot)|_{C^{\gamma}(\bar{\Omega})}$ is uniformly bounded, that is, \eqref{v5.19} holds.
\end{proof}

Now we establish the estimate on $\|f^{n}\|_{C^{1;1,\lambda}_{t;(x,v)}}([0,T]\times \bar{\Omega}\times\mathbb{R}^{2})$  by the following lemma.

\begin{Lemma}
Let $Q^{n}(t) \leq M$  for $n\geq n_{0}, 0\leq t\leq T$. Then $f^{n}$ has the following  uniform bounds, for $n\geq n_{0}+1,$
\begin{align}
\|f^{n}\|_{C^{1;1,\lambda}_{t;(x,v)}([0,T]\times \bar{\Omega}\times\mathbb{R}^{2})}\leq C(T), \label{v5.20}
\end{align}
where $C(T)$ depends only on $M,\,T$.
\end{Lemma}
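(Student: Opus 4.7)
The plan is to pass through the representation of the iterates along characteristics, exploit the uniform control on the force $E^{n-1}$ provided by Lemma~\ref{L4}, and then bootstrap the regularity of the flow map to obtain a uniform bound on $f^{n}$. Since $Q^{n-1}(t)\le M$ on $[0,T]$, the estimates \eqref{v5.18}--\eqref{v5.19} give $\|E^{n-1}\|_{L^{\infty}([0,T]\times\bar\Omega)}+\|E^{n-1}\|_{L^{\infty}([0,T];C^{\gamma}(\bar\Omega))}\le C(T)$ for any $\gamma<1$. By the iterative system \eqref{v3.2}--\eqref{v3.6} and the characteristic representation from the proof of Theorem~\ref{T2}, one has
\begin{equation*}
f^{n}(t,x,v)=f_{0}\bigl(X^{n-1}(0;t,x,v),V^{n-1}(0;t,x,v)\bigr),
\end{equation*}
so it suffices to control, uniformly in $n$, the $C^{1,\lambda}_{(x,v)}$ regularity of the backward flow together with the time derivative $f^{n}_{t}$.

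First I would split the phase space according to the new coordinate $\alpha$ introduced in \eqref{v4.7}. In the regime $\alpha\le C\delta_{0}$, the velocity lemma (Lemma~\ref{L2}) applied with the uniform bounds on $E^{n-1}$ shows that $\alpha$ stays comparable to its initial value along the characteristics of the $n$-th iterate with constants that depend only on $T$, $M$ and $f_{0}$. The flatness hypothesis \eqref{v2.8} then forces $f^{n}\equiv\text{constant}$ on this set, contributing nothing to the norm. In the complementary region $\alpha\ge C\delta_{0}$, the same velocity lemma together with the collision count argument in Step~1 of the proof of Theorem~\ref{T2} (using $|\tfrac{d\beta}{dt}|\le C/\sqrt{\delta_{0}}$) yields a bound, uniform in $n$, on the number of reflections suffered by a characteristic on $[0,T]$ of the form $N\le C(T)/\sqrt{\delta_{0}}$.

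Next I would differentiate the characteristic ODEs \eqref{v4.1}--\eqref{v4.3} with respect to the initial data $(x,v)$, writing the variational system in matrix form and invoking Gronwall. Between collisions, the coefficients involve $\nabla E^{n-1}$, which is bounded in $C^{0,\lambda}$ by \eqref{v5.13}: note that the relativistic factor $\hat v/\sqrt{1+|v|^{2}}$ and its derivatives are uniformly Lipschitz for $|v|\le M$, so the standard $C^{1,\lambda}$ ODE theory gives bounds on $\nabla_{(x,v)}X^{n-1}$, $\nabla_{(x,v)}V^{n-1}$ depending only on $M$, $T$ and the $C^{0,\lambda}$ norm of $E^{n-1}$. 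At each reflection time, the jump relation $V((s^{\ast})^{+})=V((s^{\ast})^{-})-2(V\cdot n)n$ and the smoothness of $\partial\Omega$ ($C^{5}$ by assumption) propagate the bounds multiplicatively across each bounce, and since $N$ is uniformly bounded the accumulated constant is controlled by $C(T)$. Composing with $f_{0}\in C^{1,\mu}_{0}$ and using that the reflection-induced Hölder loss can be absorbed by taking $\lambda<\mu$ (as in Theorem~\ref{T2}) yields $\|f^{n}\|_{C^{0;1,\lambda}_{t;(x,v)}}\le C(T)$ uniformly. Finally, the time derivative is read directly from equation \eqref{v3.2}: $f^{n}_{t}=-\hat v\cdot\nabla_{x}f^{n}-E^{n-1}\cdot\nabla_{v}f^{n}$, and its $L^{\infty}$ bound follows from the spatial gradient estimate together with \eqref{v5.18}.

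The main obstacle I anticipate is ensuring that the constants remain truly independent of $n$ across reflections. The delicate point is that the Hölder modulus of $\nabla_{(x,v)}X^{n-1}$, $\nabla_{(x,v)}V^{n-1}$ could a~priori deteriorate at each bounce because two nearby trajectories may collide at slightly different times; one must compare their states on a common time slab using the implicit definition of the collision time $s^{\ast}(x,v)$ and the transversality condition $F(t,0)\le -\epsilon_{0}<0$ from Lemma~\ref{L1}(1), which gives a quantitative lower bound on the normal velocity at impact and hence a $C^{1,\lambda}$ bound on $s^{\ast}(\cdot,\cdot)$ that is uniform in $n$. Combined with the uniform upper bound on the number of collisions, this prevents any blow-up of the Hölder constant with $n$, and completes the proof.
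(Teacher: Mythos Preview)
Your argument has a genuine circularity. When you differentiate the characteristic ODEs and say ``the coefficients involve $\nabla E^{n-1}$, which is bounded in $C^{0,\lambda}$ by \eqref{v5.13}'', you are invoking a bound that itself depends on $\|f^{n-1}\|_{C^{1;1,\lambda}_{t;(x,v)}}$: Proposition~\ref{P3} gives $|\nabla E^{n-1}|+[\nabla E^{n-1}]_{0,\lambda;x}\le C(T)\bigl(\|f^{n-1}\|_{C^{1;1,\lambda}_{t;(x,v)}}+1\bigr)$. At this stage the only \emph{uniform} information available on $E^{n-1}$ is Lemma~\ref{L4}, i.e.\ an $L^{\infty}$ bound and a $C^{\gamma}$ bound for $\gamma<1$, not a gradient bound. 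So your variational system for $\nabla_{(x,v)}X^{n-1},\,\nabla_{(x,v)}V^{n-1}$ has coefficients that are only controlled in terms of the very quantity the lemma is trying to estimate, and the argument does not close.

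The paper breaks this loop by a two–step bootstrap that you skipped. First, working in the coordinates $\xi=(l,\omega,\alpha,\beta)$ (in which the reflection is \emph{continuous}, so there are no jumps to track), it compares two nearby characteristics directly and obtains a Gronwall inequality for $|\xi(t)-\xi'(t)|$ using only the uniform $C^{\gamma}$ control on $E^{n}$ from Lemma~\ref{L4}; this yields the intermediate bound $\|f^{n}(t,\cdot,\cdot)\|_{C^{\lambda}_{x,v}}\le C(T)$ uniformly in $n$. Only then does it feed this into Schauder to get $\|E^{n}\|_{C^{1,\lambda}(\bar\Omega)}\le C(T)$ uniformly, and with that in hand it differentiates the flow (the matrix $W=D_{\xi_{0}}\xi$) to upgrade to $\|f^{n}\|_{C^{1,\lambda}_{(x,v)}}\le C(T)$. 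Your plan of working in $(x,v)$ coordinates and controlling the collision time $s^{\ast}(x,v)$ could in principle be made to work at the second step, but you still need the first step---a uniform $C^{\lambda}$ bound on $f^{n}$ obtained without any gradient of $E^{n-1}$---to escape the circularity. The use of the $(l,\alpha,\omega,\beta)$ coordinates is what makes that first step tractable, precisely because it removes the discontinuities at reflection that would otherwise force you to analyze $s^{\ast}$ before you have enough regularity on the field to do so.
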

\begin{proof}
Firstly,  
by the estimates \eqref{v5.18}-\eqref{v5.19} and Lemma \ref{L2}, choosing $\gamma> \frac{1}{2}$, it follows that
\begin{align}
&\|E^{n}\|_{L^{\infty}([0,T],C^{\frac{1}{2}}(\Omega))}\leq C(T),\nonumber\\
&C_{1}^{n}(T)\alpha(0)\leq \alpha^{n}(t)\leq C_{2}^{n}(T)\alpha(0),\nonumber
\end{align}
where $C_{1}^{n}(T),C_{2}^{n}(T)$ depends only on $T,\|E^{n}\|_{L^{\infty}([0,T],C^{\frac{1}{2}}(\Omega))}$.
Thus, we can obtain
\begin{align}
C_{1}(T)\alpha(0)\leq \alpha^{n}(t)\leq C_{2}(T)\alpha(0). \label{v5.21}
\end{align}
 The characteristic equations can been rewritten as,
\begin{align}
\frac{d l}{d t}=&\frac{1}{\sqrt{1+|v|^{2}}}\frac{\omega}{1-kx_{\bot}},\nonumber\\
\frac{d \omega}{d t}=&\sigma=E_{l} +\frac{1}{\sqrt{1+|v|^{2}}}\frac{k v_{\bot}\omega}{1-kx_{\bot}},\nonumber\\
\frac{d \alpha}{d t}=&v_{\bot}\Big(F(t,x_{\bot})-F(t,0)\Big)
  +\frac{v_{\bot}x_{\bot}h(x)}{\sqrt{1+|v|^{2}}}F(t,x_{\bot})\nonumber\\
  &-x_{\bot}\Big(\frac{1}{\sqrt{1+|v|^{2}}}\frac{\omega}{1-kx_{\bot}}\frac{\partial L(t,0)}{\partial l}+\sigma \frac{\partial L(t,0)}{\partial \omega}\Big),\nonumber\\
\frac{d \beta }{d t}=&-\frac{\pi}{\sqrt{2\alpha}}F(t,0)+\pi\frac{2x_{\bot}L(t,0)}{(2\alpha)^{3/2}}\Big(F(t,x_{\bot})-F(t,0)\Big)-\pi \frac{x_{\bot}v_{\bot}}{(2\alpha)^{\frac{3}{2}}}
 \Big(\sigma \frac{\partial L(t,0)}{\partial\omega}\nonumber\\
   &+\frac{1}{\sqrt{1+|v|^{2}}}\frac{\omega}{1-k x_{\bot}}
  \frac{\partial L(t,0)}{\partial l}+F(t,x_{\bot})\frac{v_{\bot}h(x)}{\sqrt{1+|v|^{2}}}\Big).\nonumber
\end{align}
Since the function $f^{n}$  propagates along the  characteristic curves,
the key is to estimate the corresponding norm of the characteristic curves with respect to the initial data to obtain the    H\"older estimate of $ f^{n}$.
Let's introduce the notation as follows,
\begin{align}
&\xi=(l,\omega,\alpha,\beta),\quad \xi_{0}=(l_{0},\omega_{0},\alpha_{0},\beta_{0}),\nonumber\\
&[g]_{\lambda;\xi_{0}}=\sup_{|(x_{0},v_{0})-(x'_{0}-v'_{0})|\leq 1}\frac{|g(x_{0},v_{0})-g(x'_{0},v'_{0})|}{|\xi_{0}-\xi'_{0}|^{\lambda}}.\nonumber
\end{align}
We have the following claim, for any $\lambda< \mu$,
\begin{align}\label{claimxi}
\Big|\frac{d}{d t}\big([\xi]\big)_{\lambda,\xi_{0}}\Big|\leq C[\xi]_{\lambda,\xi_{0}}. 
\end{align}
In fact,
\begin{align}
\frac{d \alpha}{d t}=&v_{\bot}\Big(F(t,x_{\bot})-F(t,0)\Big)
  +\frac{v_{\bot}x_{\bot}h(x)}{\sqrt{1+|v|^{2}}}F(t,x_{\bot})\nonumber\\
  &-x_{\bot}\big(\frac{1}{\sqrt{1+|v|^{2}}}\frac{\omega}{1-kx_{\bot}}\frac{\partial L(t,0)}{\partial l}+\sigma \frac{\partial L(t,0)}{\partial \omega}\big),\nonumber\\
\frac{d \alpha'}{d t}=&v'_{\bot}\Big(F'(t,x'_{\bot})-F'(t,0)\Big)
  +\frac{v'_{\bot}x'_{\bot}h(x')}{\sqrt{1+|v'|^{2}}}F'(t,x'_{\bot})\nonumber\\
  &-x'_{\bot}\Big(\frac{1}{\sqrt{1+|v'|^{2}}}\frac{\omega'}{1-kx'_{\bot}}\frac{\partial L'(t,0)}{\partial l}+\sigma \frac{\partial L'(t,0)}{\partial \omega'}\Big).\nonumber
\end{align}
Subtracting the above two equations yields
\begin{align}
\Big|\frac{d (\alpha-\alpha')}{d t}\Big|\leq C\Big(|l-l'|+|\omega-\omega'|+|\alpha-\alpha'|+|\beta-\beta'|\Big).\label{v5.22}
\end{align}
Here, we have used \eqref{v5.21} and the following facts,
\begin{align}
|x_{\bot}-x'_{\bot}|=&\Big|\frac{\alpha}{L}\big(2+4H-\frac{\beta+\beta'}{\pi}\big)\cdot \frac{\beta-\beta'}{\pi}+\big(\frac{\alpha}{L}-\frac{\alpha'}{L'}\big)
\big(1-(1-\frac{\beta'-2\pi H}{\pi})^{2}\big)\Big|\nonumber\\
\leq &C\Big(|\alpha-\alpha'|+|\beta-\beta'|+|l-l'|+|\omega-\omega'|\Big),\nonumber\\
|L'-L|=&\Big|\sqrt{1+|v|^{2}}h(x)-\sqrt{1+|v'|^{2}}h(x')+k\omega^{2}-k\omega'^{2}\Big|\nonumber\\
\leq & C\Big(|x-x'|+|v-v'|+|\omega-\omega'|\Big)\nonumber\\
\leq & C\Big(|\alpha-\alpha'|+|\beta-\beta'|+|l-l'|+|\omega-\omega'|\Big),\nonumber\\
|F-F'|=&\Big|E_{\bot}(t,x)-E_{\bot}(t,x')-\big(\frac{1}{\sqrt{1+|v|^{2}}}-\frac{1}{\sqrt{1+|v'|^{2}}}\big)\frac{k \omega^{2}}{1-k x_{\bot}}\nonumber\\
&-\frac{1}{\sqrt{1+|v'|^{2}}}\big(\frac{k \omega^{2}}{1-k x_{\bot}}-\frac{k \omega'^{2}}{1-k x'_{\bot}}\big)\Big|\nonumber\\
\leq & C\Big(|x-x'|+|v-v'|+|\omega-\omega'|+|x_{\bot}-x'_{\bot}|\Big)\nonumber\\
\leq & C\Big(|\alpha-\alpha'|+|\beta-\beta'|+|l-l'|+|\omega-\omega'|\Big).\nonumber
\end{align}
Similarly,
\begin{align}
\Big|\frac{d (l-l')}{d t}\Big|&\leq C\Big(|l-l'|+|\omega-\omega'|+|\alpha-\alpha'|+|\beta-\beta'|\Big),\label{v5.23}\\
\Big|\frac{d (\omega-\omega')}{d t}\Big|&\leq C\Big(|l-l'|+|\omega-\omega'|+|\alpha-\alpha'|+|\beta-\beta'|\Big),\label{v5.24}\\
\Big|\frac{d (\beta-\beta')}{d t}\Big|&\leq C\Big(|l-l'|+|\omega-\omega'|+|\alpha-\alpha'|+|\beta-\beta'|\Big).\label{v5.25}
\end{align}
By \eqref{v5.22}-\eqref{v5.25}, it follows that the above claim \ref{claimxi} holds. According to the claim, we have
\begin{align}
&C_{1}(T)[\xi(0)]_{\lambda,\xi_{0}}\leq [\xi]_{\lambda,\xi_{0}}\leq C_{2}(T)[\xi(0)]_{\lambda,\xi_{0}},\nonumber\\
&[\xi(0)]_{\lambda,\xi_{0}}=|\xi_{0}-\xi'_{0}|^{1-\lambda}.\nonumber
\end{align}
Therefore,  the following estimate holds
\begin{align}
[\xi]_{\lambda,\xi_{0}}\leq C(T), \label{v5.26}
\end{align}
where $C(T)$ depends only on $M,T.$ From this, we infer that
\begin{align}
\|f^{n}(t,\cdot,\cdot)\|_{C^{\lambda}_{x,v}(\bar{\Omega}\times\mathbb{R}^{2})}\leq C(T). \label{v5.27}
\end{align}
In fact, since
\begin{align}
\frac{|\xi(t)-\xi'(t)|}{|\xi_{0}-\xi'_{0}|}&=\frac{|\xi(t)-\xi'(t)|}{|\xi_{0}-\xi'_{0}|^{\lambda}}\cdot\frac{1}{|\xi_{0}-\xi'_{0}|^{1-\lambda}}
=[\xi]_{\lambda,\xi_{0}}\cdot\frac{1}{|\xi_{0}-\xi'_{0}|^{1-\lambda}}\nonumber\\
&\geq C_{1}(T)[\xi(0)]_{\lambda,\xi_{0}}\frac{1}{|\xi_{0}-\xi'_{0}|^{1-\lambda}}=C_{1}(T).\nonumber
\end{align}
That is, if $|\xi(t)-\xi'(t)|\leq 1$, we get $|\xi_{0}-\xi'_{0}|\leq \frac{1}{C_{1}(T)}$.
As a result,
\begin{align}
\sup_{|(x,v)-(x',v')|\leq 1}&\frac{|f^{n}(t,x,v)-f^{n}(t,x',v')|}{|x-x'|^{\lambda}+|v-v'|^{\lambda}}
=\sup_{|\xi(t)-\xi'(t)|\leq 1}\frac{|f_{0}(x_{0},v_{0})-f_{0}(x'_{0},v'_{0})|}{|\xi(t)-\xi'(t)|^{\lambda}}\nonumber\\
&=\sup_{|\xi_{0}-\xi'_{0}|\leq \frac{1}{C_{1}(T)}}\frac{|f_{0}(x_{0},v_{0})-f_{0}(x'_{0},v'_{0})|}{|\xi_{0}-\xi'_{0}|^{\lambda}}\cdot
\frac{1}{\Big(\frac{|\xi(t)-\xi'(t)|}{|\xi_{0}-\xi'_{0}|}\Big)^{\lambda}}\nonumber\\
&\leq\sup_{C_{1}|\xi_{0}-\xi'_{0}|\leq 1}\frac{|f_{0}(x_{0},v_{0})-f_{0}(x'_{0},v'_{0})|}{\big(C_{1}|\xi_{0}-\xi'_{0}|\big)^{\lambda}}\nonumber\\
&\leq C(T).\nonumber
\end{align}
Hence, the inequality \eqref{v5.27} holds.
From this,  we further infer that
\begin{align}
\|E^{n}(t,\cdot)\|_{C^{1,\lambda}}(\bar{\Omega})\leq C(T).\label{v5.28}
\end{align}
In fact, we first the estimate on $\|\rho^{n}(t,\cdot)\|_{C^{0,\lambda}}$,
\begin{align}
&|\rho^{n}(t,x)|\leq \|f^{n}\|_{L^{\infty}}\big(Q^{n}(t)\big)^{2}\leq M^{2}\|f_{0}\|_{L^{\infty}},\nonumber\\
&\frac{|\rho^{n}(t,x)-\rho^{n}(t,x')|}{|x-x'|^{\lambda}}\leq \|f^{n}(t,\cdot,\cdot)\|_{C^{\lambda}_{x,v}}\big(Q^{n}(t)\big)^{2}\leq C(T).\nonumber
\end{align}
Using again the Schauder estimate for the Poisson equation and \eqref{v5.6}, one has
\begin{align}
\|E^{n}(t,\cdot)\|_{C^{1,\lambda}(\bar{\Omega})}\leq C\big(\|\rho^{n}\|_{C^{0,\lambda}}+\|h\|_{C^{1,\lambda}}\big) \leq C(T).\nonumber
\end{align}
Similar to the estimate \eqref{v5.26}, we can obtain
\begin{align}
[\frac{\partial \xi}{\partial\xi_{0}}]\leq C(T).\label{v5.29}
\end{align}
 In fact, for simplicity, we introduce the following notation, $\xi=(l,\omega,\alpha,\beta)=\xi(t,\xi_{0})$,
\begin{align}
&\frac{d \xi}{d t}=:A(\xi),\nonumber\\
&\xi(0)=\xi_{0}.\nonumber
\end{align}
We denote $W=D_{\xi_{0}} \xi (t,\xi_{0}) $ as an $4\times4$ matrix, $I$ as identity matrix, then we have
\begin{align}
&\frac{d W}{d t}=D_{\xi}A(\xi) W,\nonumber \\
&W(0)=I.\nonumber
\end{align}
Therefore,  we have two conclusions about W, 
\begin{align}
C_{1}(T)|W(0)|\leq |W(t)|\leq C_{2}(T)|W(0)|,\label{v5.30}
\end{align}
and  
\begin{align}
\Big|\frac{d}{ d t}[W]_{\lambda,\xi_{0}}\Big|\leq C[W]_{\lambda,\xi_{0}}+C[\xi]_{\lambda,\xi_{0}}.\nonumber
\end{align}
Applying Gronwall's inequality, it follows that \eqref{v5.29} holds.
Finally, based on the inequlities \eqref{v5.29},\eqref{v5.30}, we can conclude that
$$\|f^{n}(t,\cdot,\cdot)\|_{C^{1,\lambda}_{(x,v)}}\leq C(T).$$
Using the equation \eqref{v3.2}, we have
$$\|f^{n}_{t}\|_{C^{\lambda}_{(x,v)}}\leq C(T).$$
Thus we deduce that \eqref{v5.20} is true.
\end{proof}

Next, we will prove the convergence of the iterative sequence $f^{n}$.
\begin{Proposition}\label{P5}
Under the assumptions of Theorem \ref{T1}, let us assume that $Q^{n}(t)\leq M$ for $n\geq n_{0}, t\in [0,T]$. Then there exists $f\in C^{1;1,\lambda}_{t;(x,v)}([0,T]\times \bar{\Omega}\times \mathbb{R}^{2})$ which satisfies the system \eqref{v1.5}-\eqref{v1.9} such that $f^{n}\,\rightarrow \,f\in C^{\nu;1,\lambda}_{t;(x,v)}([0,T]\times \bar{\Omega}\times \mathbb{R}^{2})$ when $n\rightarrow \infty, $ with $0<\lambda<\mu, 0<\nu<1.$
\end{Proposition}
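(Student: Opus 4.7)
The plan is to combine the uniform estimates with compactness, pass to a limit, and then upgrade subsequential convergence to full-sequence convergence via uniqueness. First, since $Q^{n}(t)\le M$ for all $n\ge n_{0}$ and $t\in[0,T]$, every $f^{n}$ is supported in the fixed compact set $\bar\Omega\times\overline{B_{M}(0)}$. Combined with the uniform bound $\|f^{n}\|_{C^{1;1,\lambda}_{t;(x,v)}([0,T]\times\bar\Omega\times\mathbb{R}^{2})}\le C(T)$ established in the previous lemma, this gives uniform control of $f^{n}$, $\nabla_{x,v}f^{n}$ and $\partial_{t}f^{n}$ in the $C^{0,\lambda}$--norms on a fixed compact set. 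The classical Arzelà--Ascoli theorem then produces a subsequence, still denoted $\{f^{n}\}$, converging in $C^{\nu;1,\lambda'}_{t;(x,v)}([0,T]\times\bar\Omega\times\mathbb{R}^{2})$ for every $\nu<1$ and every $\lambda'<\lambda$ to some limit $f$, which inherits the bound $\|f\|_{C^{1;1,\lambda}_{t;(x,v)}}\le C(T)$ and the compact support in $v$ (Fatou's lemma/lower semicontinuity of the seminorm under uniform limits).

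Next I would propagate this convergence through the Poisson step. From the uniform Hölder convergence of $f^{n}$ and the bound on $Q^{n}$, the density $\rho^{n}=\int f^{n}\,dv$ converges in $C^{0,\lambda'}(\bar\Omega)$ uniformly in $t$, so by Proposition \ref{P2} (Schauder for the Neumann problem) the corresponding potentials $\varphi^{n}$ converge in $C^{2,\lambda'}(\bar\Omega)$, hence $E^{n}=\nabla\varphi^{n}\to E=\nabla\varphi$ in $C^{1,\lambda'}(\bar\Omega)$ uniformly on $[0,T]$. I would then use the representation $f^{n}(t,x,v)=f_{0}(X^{n-1}(0;t,x,v),V^{n-1}(0;t,x,v))$ from Theorem \ref{T2}: the characteristic flows $(X^{n-1},V^{n-1})$ generated by $E^{n-1}$ converge uniformly on $[0,T]\times\bar\Omega\times\overline{B_{M}(0)}$ to the flow $(X,V)$ generated by $E$, as a consequence of the $C^{1,\lambda'}$ convergence of $E^{n-1}$ and standard ODE stability, piecewise between collisions. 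Passing to the limit identifies $f(t,x,v)=f_{0}(X(0;t,x,v),V(0;t,x,v))$, which by Theorem \ref{T2} solves the linear transport with the limiting field $E$ and satisfies the specular reflection condition \eqref{v1.9}, the initial condition \eqref{v1.8}, and couples to $\varphi$ via \eqref{v1.6}--\eqref{v1.7}.

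Full-sequence convergence follows from uniqueness of the limiting system: any subsequential limit solves \eqref{v1.5}--\eqref{v1.9} with the same $f_{0}$ and $h$, and two such solutions with the regularity class $C^{1;1,\lambda}_{t;(x,v)}$ must coincide by a Gronwall estimate on the difference of their characteristic flows (the Lipschitz norm of the coupled field is controlled by the same constants $C(T)$). Therefore every subsequence has the same cluster point $f$, and the entire sequence $\{f^{n}\}$ converges in $C^{\nu;1,\lambda}_{t;(x,v)}([0,T]\times\bar\Omega\times\mathbb{R}^{2})$.

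The main obstacle, I expect, is the handling of reflections in the passage to the limit: because the characteristics bounce off $\partial\Omega$, the convergence of $(X^{n-1},V^{n-1})$ fails to be $C^{1}$ globally in $s$ across collision times, and both the number and the location of the collisions depend on $E^{n-1}$. This is reconciled by invoking the velocity lemma (Lemma \ref{L2}) together with the flatness hypothesis \eqref{v2.8}: trajectories with $\alpha(0)\le C\delta_{0}$ carry a constant value of $f_{0}$ and thus contribute nothing to be tracked near $\partial\Omega$, while trajectories with $\alpha(0)\ge C\delta_{0}$ collide at most $C/\sqrt{\delta_{0}}$ times on $[0,T]$, with collision times depending continuously on the field through the transversality $L(t,0)\le-\varepsilon_{0}<0$. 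Hence the subdivision of $[0,T]$ into smooth pieces is uniform in $n$, each piece passes to the limit by classical ODE stability, and the boundary condition \eqref{v1.9} is preserved under the limit.
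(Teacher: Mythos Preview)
Your argument is essentially correct but follows a genuinely different route from the paper. The paper proves directly that $\{f^{n}\}$ is Cauchy in $L^{\infty}([0,T];L^{1}(\Omega\times\mathbb{R}^{2}))$: writing the equation for $f^{n+1}-f^{n}$, integrating along the characteristics generated by $E^{n}$, and using the Green's function bound \eqref{v5.5} together with the measure-preserving property of the flow, one obtains the recursive inequality
\[
\|f^{n+1}(t)-f^{n}(t)\|_{L^{1}}\le C(T)\int_{0}^{t}\|f^{n}(s)-f^{n-1}(s)\|_{L^{1}}\,{\rm d}s,
\]
which yields geometric decay on short subintervals and hence the Cauchy property on all of $[0,T]$. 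The paper then interpolates between this $L^{1}$ convergence and the uniform $C^{1;1,\lambda}$ bound (via $W^{1,p}$, Sobolev embedding, and H\"older interpolation) to upgrade to $C^{\nu;1,\lambda}$ convergence of the full sequence; no subsequence extraction or separate uniqueness step is needed. Your compactness-plus-uniqueness scheme is cleaner conceptually, but it trades the explicit contraction for two pieces of nontrivial work: the uniqueness Gronwall (which in the paper is the \emph{same} $L^{1}$ estimate, applied to two solutions rather than two iterates) and the uniform convergence of the reflected characteristics $(X^{n-1},V^{n-1})\to(X,V)$, which the paper defers to Proposition~\ref{P7}. One small point to tidy: Arzel\`a--Ascoli gives convergence only in $C^{\nu;1,\lambda'}$ for $\lambda'<\lambda$; to land in $C^{\nu;1,\lambda}$ as stated you should first run the uniform bound at some $\tilde\lambda\in(\lambda,\mu)$ and then drop to $\lambda$.
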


\begin{proof}
First of all, we show that the iterative sequence $f^{n}$ is a Cauchy sequence in $L^{1}([0,T]\times\Omega\times\mathbb{R}^{2})$.
According to the iterative equations \eqref{v3.2}, it follows that $f^{n+1}-f^{n}$ satisfies the following equation,
\begin{align}
\big(f^{n+1}-f^{n}\big)_{t}+\frac{v}{\sqrt{1+|v|^{2}}}\cdot &\nabla_{x}\big(f^{n+1}-f^{n}\big)+\nabla_{x}\varphi^{n}\cdot\nabla_{v}\big(f^{n+1}-f^{n}\big)\nonumber\\
&=\nabla_{x}\cdot \big(\varphi^{n-1}-\varphi^{n}\big)\cdot \nabla_{v}f^{n}.\label{v5.31}
\end{align}
The characteristic  equations corresponding to the iterative equation \eqref{v3.2} are,
\begin{align}
&\frac{d X}{d s}=\frac{V(s)}{\sqrt{1+|V(s)|^{2}}},\quad \frac{d V(s)}{d s}=\nabla_{x}\varphi^{n}(s,X(s)),\nonumber\\
&X(t)=x,\quad v(t)=v.\nonumber
\end{align}
Integrating the equation \eqref{v5.31} along characteristic curves $(X(s),V(s))$, and using the fact $f^{n+1}(0,X(0),V(0))=f^{n}(0,X(0),V(0))$,  we obtain,
\begin{align}
\big(f^{n+1}-f^{n}\big)(t,x,v)=\int^{t}_{0}\nabla_{x} \big(\varphi^{n-1}-\varphi^{n}\big)(s,X(s)) \nabla_{v}f^{n}(s,X(s),V(s)){\rm d} s. \label{v5.32}
\end{align}
From the representation formula \eqref{v5.4} and the estimates \eqref{v5.5}, we have the following estimate on $\nabla_{x}\big(\varphi^{n-1}-\varphi^{n}\big)(s,x)$,
\begin{align}
\Big|\nabla_{x}\big(\varphi^{n-1}-\varphi^{n}\big)(s,x)\Big|\leq C\int_{\Omega}\frac{|\rho^{n}(y)-\rho^{n-1}(y)|}{|x-y|}{\rm d} y.\nonumber
\end{align}
Integrating \eqref{v5.32} with respect to the  variables $(x,v)$ over the space $\Omega\times\mathbb{R}^{2}$, we get,
\begin{align}
&\|f^{n+1}(t)-f^{n}(t)\|_{L^{1}(\Omega\times \mathbb{R}^{2})}\nonumber\\
\quad &\leq \int^{t}_{0}\int\int_{\Omega\times \mathbb{R}^{2}}\big|\big(\varphi^{n-1}-\varphi^{n}\big)(s,X(s))\big|\big|\nabla_{v}f^{n}(s,X(s),V(s))\big|
{\rm d} X(s){\rm d} V(s){\rm d} s\nonumber\\
\quad &\leq C  \int^{t}_{0}\int_{\Omega}G^{n}(y,s)\big|\rho^{n}(s,y)-\rho^{n-1}(s,y)\big|{\rm d} y{\rm d} s,\nonumber
\end{align}
where $G^{n}(y,s)$ is defined as,
\begin{align}
G^{n}(y,s)&=\int\int_{\Omega\times \mathbb{R}^{2}}\frac{1}{|X(s)-y|}|\nabla_{v}f^{n}(s,X(s),V(s))|{\rm d} X(s){\rm d} V(s)\nonumber\\
&=\int\int_{\Omega\times \mathbb{R}^{2}}\frac{1}{|x-y|}|\nabla_{v} f^{n}(s,x,v)|{\rm d} x{\rm d} v \nonumber\\
&=\int_{|y-x|\leq r}\frac{1}{|x-y|}\|\nabla_{v} f^{n}(s,x,\cdot)\|_{L^{1}_{v}}{\rm d} x\nonumber\\
&\quad+\int_{|y-x|\geq r}\frac{1}{|x-y|}\|\nabla_{v} f^{n}(s,x,\cdot)\|_{L^{1}_{v}}{\rm d} x \nonumber\\
&\leq C r \|\nabla _{v}f^{n}(t,x)\|_{L^{\infty}_{x}(L^{1}_{v})}+\frac{1}{r}\|\nabla _{v}f^{n}(t,\cdot,\cdot)\|_{L^{1}_{(x,v)}}\nonumber\\
&\leq C\|\nabla _{v}f^{n}(t,x)\|^{\frac{1}{2}}_{L^{\infty}_{x}(L^{1}_{v})}\|\nabla _{v}f^{n}(t,\cdot,\cdot)\|^{\frac{1}{2}}_{L^{1}_{(x,v)}}.\nonumber
\end{align}
Here, we used Lemma \ref{L3} and chose $r=\|\nabla _{v}f^{n}(t,x)\|^{-\frac{1}{2}}_{L^{\infty}_{x}(L^{1}_{v})}\|\nabla _{v}f^{n}(t,\cdot,\cdot)\|^{\frac{1}{2}}_{L^{1}_{(x,v)}}$.

Since $\|f^{n}\|_{C^{1;1,\lambda}_{t,(x,v)}}\leq C(T)$, it follows that $\|f^{n}\|_{W^{1,\infty}}\leq C(T)$. Furthermore, we  have 
\begin{align}
\|\nabla _{v}f^{n}(t,x)\|_{L^{\infty}_{x}(L^{1}_{v})}&\leq C(T)\|f^{n}\|_{W^{1,\infty}}\big(Q^{n}(t)\big)^{2}\leq C(T),\nonumber\\
\|\nabla _{v}f^{n}(t,\cdot,\cdot)\|_{L^{1}_{(x,v)}}&\leq C(T)\|f^{n}\|_{W^{1,\infty}}\big(Q^{n}(t)\big)^{2}|\Omega|\leq C(T),\nonumber
\end{align}
where $|\Omega|$ represents  the measure of $\Omega$, then we infers that $G^{n}(y,s)\leq C(T)$.
Therefore, we obtain the following recursive inequality,
\begin{align}
&\|f^{n+1}(t)-f^{n}(t)\|_{L^{1}(\Omega\times \mathbb{R}^{2})}\nonumber\\
&\quad \leq C(T)\int^{t}_{0}\|f^{n+1}(s)-f^{n}(s)\|_{L^{1}(\Omega\times \mathbb{R}^{2})}{\rm d}s, \label{v5.33}
\end{align}
where $C(T)$ depends only on $T,M$ and the initial data.

For convenience, we denote $A_{n+1}(t):=\|f^{n+1}(t)-f^{n}(t)\|_{L^{1}(\Omega\times \mathbb{R}^{2})}$, then
we claim that there exists $0<\kappa<1, \epsilon_{0}$ (depending only on T), such that if $0< t< \epsilon_{0}$,
$$A_{n+1}(t)\leq C_{1}\kappa^{n}.$$
Here, we provide some simple deductions, when $0< t< \epsilon_{0}$, by \eqref{v5.33},
\begin{align}
A_{n}(t)\leq C(T)\int^{\epsilon_{0}}_{0}A_{n-1}(s){\rm d}s\leq\cdots \leq \|A_{1}\|_{L^{\infty}}\big(C(T)\epsilon_{0}\big)^{n-1}, \nonumber
\end{align}
choosing $C(T)\epsilon_{0}=\kappa<1$,  and it follows that the claim holds.

If $\epsilon_{0}< t\leq 2\epsilon_{0}$, using again \eqref{v5.33}, we have,
$$A_{n+1}(t)\leq C_{1}\kappa^{n}+C(T)\int^{t}_{\epsilon_{0}}A_{n-1}(s){\rm d}s.$$
From this iterative inequality, we can obtain,
$$A_{n+1}(t)\leq C_{1}(n+1)\kappa^{n}.$$
In a similar way, we can obtain if $ k\epsilon_{0}< t\leq (k+1)\epsilon_{0}, k=0,1,2,\ldots$,
$$A_{n+1}(t)\leq C_{1}(n+1)^{k}\kappa^{n}.$$
From the iterative discussion above, we can determine that $f^{n}$ is a Cauchy sequence in $L^{\infty}\big([0,T],L^{1}(\Omega\times\mathbb{R}^{2})\big)$.

Now based on   \eqref{v5.20} and  the fact that $f^{n}$ is a Cauchy sequence in $L^{1}([0,T]\times\Omega\times\mathbb{R}^{2})$,   using the interpolation discussion, we can prove that
for any $0< \lambda<\mu, 0<\nu < 1$, $f^{n}$ is also Cauchy in $C^{\nu;1,\lambda}_{t;(x,v)}([0,T]\times \bar{\Omega}\times \mathbb{R}^{2})$.

In fact, according to \eqref{v5.20}, applying the following  interpolation formula, for any $p>1$,
\begin{align}
&\|f^{n}\|_{W^{1,\infty}([0,T]\times\bar{\Omega}\times\mathbb{R}^{2})}\leq C(T),\nonumber\\
&\|f^{n}\|_{L^{p}([0,T]\times\bar{\Omega}\times\mathbb{R}^{2})}\leq C\|f^{n}\|^{\frac{1}{P}}_{L^{\infty}([0,T]\times\bar{\Omega}\times\mathbb{R}^{2})}\|f^{n}\|^{1-\frac{1}{P}}_{L^{1}([0,T]\times\bar{\Omega}\times\mathbb{R}^{2})},\nonumber\\
&\|\nabla f^{n}\|_{L^{p}([0,T]\times\bar{\Omega}\times\mathbb{R}^{2})}\leq
C\|f^{n}\|^{\vartheta}_{W^{1,\infty}([0,T]\times\bar{\Omega}\times\mathbb{R}^{2})}\|f^{n}\|^{1-\vartheta}_{L^{1}([0,T]\times\bar{\Omega}\times\mathbb{R}^{2})},\nonumber
\end{align}
where $\vartheta=1-\frac{5}{6p}$. Thus, $f^{n}$ is  a Cauchy sequence in $W^{1,p}([0,T]\times\Omega\times\mathbb{R}^{2})$.
Furthermore, based on Sobolev's embedding inequality, it follows that $f^{n}$ is  a Cauchy sequence in $C^{\tilde{\nu}}([0,T]\times\Omega\times\mathbb{R}^{2})$
 for any $0<\tilde{\nu}<1$.

According to the interpolation theorem in Schauder spaces (cf. Chapter 6 in \cite{GT}) and interpolating between $ C^{1,\tilde{\lambda}}_{(x,v)}$ and $C^{\tilde{\nu}}$, we have
$f^{n}$ is Cauchy sequence in $C^{1,\lambda}_{(x,v)}([0,T]\times\Omega\times\mathbb{R}^{2})$ for $0<\lambda<\tilde{\lambda}$. Then  in the same way  interpolating in $C^{1}_{t}([0,T]\times\Omega\times\mathbb{R}^{2})$ and $C^{0}([0,T]\times\Omega\times\mathbb{R}^{2})$, we see that $f^{n}$ is also Cauchy sequence in $C^{\nu}_{t}([0,T]\times\Omega\times\mathbb{R}^{2})$ for $0<\nu<1$. 
Therefore, $f^{n}$ is also Cauchy in $C^{\nu;1,\lambda}_{t;(x,v)}([0,T]\times \bar{\Omega}\times \mathbb{R}^{2})$.

Finally, we explain  why $f \in C^{1;1,\lambda}_{t;(x,v)}([0,T]\times \bar{\Omega}\times \mathbb{R}^{2})$. Integrating the equation \eqref{v3.2} over $[0,T]$ yields
$$ f^{n}(t)=f_{0}-\int^{t}_{0}\Big(\frac{v}{\sqrt{1+|v|^{2}}}\cdot\nabla_{x}f^{n}(s)+\nabla_{x}\varphi^{n-1}(s)\cdot\nabla_{v}f^{n}(s)\Big) {\rm d}s.$$
Taking the limit in the above equation when $n\rightarrow \infty$, we have
$$ f(t)=f_{0}-\int^{t}_{0}\Big(\frac{v}{\sqrt{1+|v|^{2}}}\cdot\nabla_{x}f(s)+\nabla_{x}\varphi(s)\cdot\nabla_{v}f(s)\Big) {\rm d}s.$$
That is,  $f$ is continuously differentiable with respect to $t$. We complete the proof of the proposition.
\end{proof}

\subsection{Prolongability of uniform estimates for the functions $f^{n}$}
\begin{Proposition}\label{P6}
Let us assume that for some $T>0$ there exists $M>0$ and $n_{0}\geq 0$ such that $Q^{n}(t) \leq M$  for any $n\geq n_{0}$ and $t\in [0,T]$. Then, there exists $\tau >0$ depending only on $M,\|f_{0}\|_{L^{\infty}}$, such that, 
$$Q^{n}(t) \leq 2M,$$
for $0\leq t\leq T+\tau$ and $n\geq n_{0}$.
\end{Proposition}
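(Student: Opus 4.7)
The plan is a short-time continuation argument along the characteristics: since by Lemma 5.4 the hypothesis $Q^n(t)\le M$ already yields a uniform bound on $E^{n-1}$ whose size depends only on $M$, $\|f_0\|_{L^\infty}$ and the fixed data $(\Omega,h)$, the growth of $|V^n|$ beyond time $T$ is linear with a controlled slope, so $|V^n|$ cannot double on any interval shorter than $M$ divided by that slope.

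More precisely, from $Q^{n-1}\le M$ on $[0,T]$ (valid for $n\ge n_0+1$) one gets $\|\rho^{n-1}(t,\cdot)\|_{L^\infty}\le M^2\|f_0\|_{L^\infty}$; inserting this into the Green's-function representation of Proposition 5.1 produces a bound
\[
|E^{n-1}(t,x)|\le C_0,\qquad n\ge n_0+1,\ t\in[0,T],
\]
with $C_0=C_0(M,\|f_0\|_{L^\infty},\Omega,h)$ \emph{independent of $T$}. I would then set up the bootstrap
\[
T^{*}=\sup\Bigl\{\,t\ge T:\ Q^n(s)\le 2M\ \text{for all}\ n\ge n_0+1\ \text{and}\ s\in[0,t]\,\Bigr\},
\]
which satisfies $T^{*}>T$ by continuity of the characteristic flow (Proposition 5.4) together with $Q^n(T)\le M<2M$. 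On $[0,T^{*}]$, repeating the previous estimate with $2M$ in place of $M$ gives a uniform bound $|E^{n-1}|\le C_1$ for $n\ge n_0+1$, with $C_1=C_1(M,\|f_0\|_{L^\infty},\Omega,h)$.

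Along any characteristic of $f^n$ one has $\dot V^n=E^{n-1}(s,X^n(s))$ between collisions, and $|V^n|$ is preserved by specular reflection, so for $s\in[T,T^{*}]$,
\[
|V^n(s)|\le|V^n(T)|+\int_T^s|E^{n-1}(\tau,X^n(\tau))|\,d\tau\le M+C_1(s-T).
\]
Taking the supremum over the characteristics whose endpoints lie in $\mathrm{supp}\,f^n(s)$ yields $Q^n(s)\le M+C_1(s-T)$ for all $n\ge n_0+1$. If $T^{*}<T+M/C_1$, then $Q^n(T^{*})<2M$ strictly, contradicting the maximality of $T^{*}$; hence $T^{*}\ge T+\tau$ with $\tau:=M/C_1$ depending only on $M$ and $\|f_0\|_{L^\infty}$. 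The single remaining index $n=n_0$ is handled separately: $E^{n_0-1}$ is a fixed function associated to the already-constructed $f^{n_0-1}$ (Proposition 5.4), hence bounded on $[0,T+1]$, and the same integration along characteristics yields the desired bound on $Q^{n_0}$, possibly on a smaller subinterval; one then takes the minimum of the two $\tau$'s.

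The main obstacle is ensuring the $T$-independence of $C_0$ and $C_1$: one must verify that Lemma 5.4 and Proposition 5.1 really deliver bounds on $\|E^{n-1}\|_{L^\infty}$ controlled only by $\|\rho^{n-1}\|_{L^\infty}$, $\|h\|$ and $\Omega$, with no hidden dependence on the length of the time interval. Once this is checked, the continuation time $\tau=M/C_1$ is genuinely uniform in $T$, and the bootstrap closes as required.
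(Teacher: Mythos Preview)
Your approach---bound $E^{n-1}$ through $\rho^{n-1}$ and hence through $Q^{n-1}$, integrate $\dot V^n=E^{n-1}$ along characteristics using that $|V^n|$ is unchanged by specular reflection, then run a short-time continuation---is exactly what the paper does. The paper packages the continuation a little differently: rather than a bootstrap with $T^*$, it keeps the pointwise bound $|E^{n}(s,\cdot)|\le C\,Q^{n}(s)+C$ (valid for \emph{all} $s$, not only where $Q^{n}\le 2M$), records the integral inequality $Q^{n+1}(t)\le M+C\int_T^t Q^{n}(s)\,ds+C(t-T)$, passes to the finite maximum $P^{N}(t)=\max_{n_0\le l\le N}Q^{l}(t)$, and closes by a Gronwall-type argument uniform in $N$.

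Your bootstrap has a soft spot at ``$T^*>T$ by continuity'': since $T^*$ is a supremum over infinitely many indices, continuity of each individual $Q^n$ does not give $T^*>T$, nor does $Q^n(T^*)<2M$ for every $n$ contradict maximality without a uniform-in-$n$ extension past $T^*$. The cure is precisely the paper's device: run the bootstrap for finitely many indices $n_0\le n\le N$ (continuity of finitely many functions does give $T^*_N>T$ and the contradiction closes), obtain $T^*_N\ge T+\tau$ with $\tau$ independent of $N$, then let $N\to\infty$. Your handling of the single index $n=n_0$ is also not quite right: bounding $E^{n_0-1}$ on $[0,T+1]$ as ``a fixed function'' yields a constant depending on $n_0$ and $T$, so the resulting $\tau$ would not depend only on $M$ and $\|f_0\|_{L^\infty}$. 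The paper glosses over this same point; in the application one effectively has $n_0=0$, where $Q^0$ is constant in time. Your final remark about the $T$-independence of $C_0,C_1$ is well taken and is the crux: the Green-function estimate of Proposition~\ref{P1}, together with $\|\rho^{n-1}\|_{L^1}=\|f_0\|_{L^1}$, delivers exactly this.
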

\begin{proof}
With the help of the representation formula  \eqref{v5.4}, it follows that
\begin{align}
|\nabla \varphi^{n}|&\leq C \int_{|x-y|\leq r}\frac{1}{|x-y|}|\rho^{n}(y)|{\rm d}y +C \int_{|x-y|\geq r}\frac{1}{|x-y|}|\rho^{n}(y)|{\rm d}y + \|h\|_{C^{1,\mu}}\nonumber\\
&\leq C\|\rho^{n}\|_{L^{\infty}}r+C\frac{\|\rho^{n}\|_{L^{1}}}{r}+ \|h\|_{C^{1,\mu}}\nonumber\\
&\leq C\|\rho^{n}\|^{\frac{1}{2}}_{L^{\infty}}\|\rho^{n}\|^{\frac{1}{2}}_{L^{1}}+ \|h\|_{C^{1,\mu}}\nonumber\\
&\leq C Q^{n}(t) \|f_{0}\|^{\frac{1}{2}}_{L^{\infty}}\|f_{0}\|^{\frac{1}{2}}_{L^{1}}+ \|h\|_{C^{1,\mu}},\nonumber
\end{align}
where we used the following facts,
\begin{align}
&r:=\|f_{0}\|^{-\frac{1}{2}}_{L^{\infty}}\|f_{0}\|^{\frac{1}{2}}_{L^{1}},\quad \|\rho^{n}\|_{L^{1}}=\|f_{0}\|_{L^{1}},\nonumber\\
&|\rho^{n}(t,x)|=\big|\int_{\mathbb{R}^{2}}f^{n}(t,x,v){\rm d} v\big|\leq \|f_{0}\|_{L^{\infty}}\big(Q^{n}(t)\big)^{2}.\nonumber
\end{align}
According to the characteristic equation \eqref{v4.2}, we have  for $t\geq T$,
\begin{align}
v^{n+1}(t)-v^{n+1}(T)=\int_{T}^{t}\nabla \varphi^{n}(s,X(s)){\rm d}s,\nonumber
\end{align}
furthermore, by \eqref{v5.17}, it follows that
\begin{align}
Q^{n+1}(t)&\leq Q^{n+1}(T)+C\|f_{0}\|^{\frac{1}{2}}_{L^{\infty}}\|f_{0}\|^{\frac{1}{2}}_{L^{1}}\int^{t}_{T} Q^{n}(s){\rm d}s+C(t-T)\nonumber\\
&\leq M+C\|f_{0}\|^{\frac{1}{2}}_{L^{\infty}}\|f_{0}\|^{\frac{1}{2}}_{L^{1}}\int^{t}_{T} Q^{n}(s){\rm d}s +C(t-T),\nonumber
\end{align}
where $C$ is independent of $n,Q^{n}$.
Defining $P^{n}(t)=\max\big\{Q^{l}(t): n_{0}\leq l\leq n\big\}$, we have
\begin{align}
P^{n+1}(t)
&\leq M+C\|f_{0}\|^{\frac{1}{2}}_{L^{\infty}}\|f_{0}\|^{\frac{1}{2}}_{L^{1}}\int^{t}_{T} P^{n+1}(s){\rm d}s +C(t-T).\nonumber
\end{align}
  On the basis of a Gronwall-type   argument, choosing $\tau=\frac{M}{C\big(2M\|f_{0}\|^{\frac{1}{2}}_{L^{\infty}}\|f_{0}\|^{\frac{1}{2}}_{L^{1}}+1\big)}$  yields
\begin{align}
Q^{n}(t)\leq 2M, \quad n\geq n_{0},\,\, 0\leq t\leq T+\tau. \nonumber
\end{align}
\end{proof}

\subsection{The sequence $Q^{n}(t)$ converge to $Q(t)$}
\begin{Proposition}\label{P7}
Let us assume that $Q(t)\leq M,  Q^{n}(t)\leq M$ for $n\geq n_{0}, 0\leq t\leq T$. If $f^{n}\,\rightarrow \,f\in C^{\nu;1,\lambda}_{t;(x,v)}([0,T]\times \bar{\Omega}\times \mathbb{R}^{2})$ for any $0<\lambda<\mu, 0<\nu<1$,  then we have $Q^{n}(t)\rightarrow Q(t)$ uniformly on $[0,T]$.
\end{Proposition}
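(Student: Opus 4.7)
My plan is to express $Q^n(t)$ and $Q(t)$ through the forward characteristic flows starting from $\text{supp}\,f_0$ and then show those flows converge uniformly. By Theorem \ref{T2} applied with field $E^{n-1}$, each $f^n$ is transported along the characteristics of $E^{n-1}$ (with specular reflection on $\partial\Omega$) from the initial data $f_0$, and similarly $f$ is transported along the characteristics of $E$. Denoting these by $(X_n(s;0,x_0,v_0),V_n(s;0,x_0,v_0))$ and $(X(s;0,x_0,v_0),V(s;0,x_0,v_0))$ and noting that specular reflection preserves the Euclidean norm of $v$, we have
\[
Q^n(t)=\sup\bigl\{|V_n(s;0,x_0,v_0)|:(x_0,v_0)\in\text{supp}\,f_0,\ 0\le s\le t\bigr\},
\]
with the analogous identity for $Q(t)$. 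Consequently, any uniform estimate $\bigl||V_n|-|V|\bigr|\to 0$ on the compact set $[0,T]\times\text{supp}\,f_0$ yields $\sup_{t\in[0,T]}|Q^n(t)-Q(t)|\to 0$ for free.

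To secure the trajectory convergence I would first deduce uniform convergence of the fields. The hypothesis $f^n\to f$ in $C^{\nu;1,\lambda}_{t;(x,v)}$, combined with the uniform velocity bound $Q^n(t),Q(t)\le M$, gives $\rho^n\to\rho$ in $C^0\bigl([0,T];C^{0,\lambda}(\bar\Omega)\bigr)$ by integrating in $v$ over the fixed ball $B_M$. Applying Proposition \ref{P2} to the difference $\varphi^n-\varphi$ (whose Laplacian is $\rho^n-\rho$ and whose Neumann derivative on $\partial\Omega$ vanishes by \eqref{v3.4}) then produces $E^n\to E$ uniformly on $[0,T]\times\bar\Omega$.

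Next, between consecutive boundary collisions the characteristic ODE \eqref{v4.1}--\eqref{v4.2} has a uniformly convergent right-hand side, so Gronwall gives locally uniform convergence of $(X_n,V_n)$ to $(X,V)$. To glue intervals across reflections I would invoke Lemma \ref{L2}: for trajectories starting with $\alpha(0)\ge C\delta_0$, the conserved-up-to-constants quantity $\alpha$ is bounded below along the flow, so the approach normal velocity $|v_\bot|=\sqrt{2\alpha}$ at every collision is bounded below by $C(T)\sqrt{\delta_0}$. This uniform transversality, together with standard ODE perturbation theory for transversal crossings of a smooth manifold, implies that the collision times $\tau^n_j(x_0,v_0)\to\tau_j(x_0,v_0)$ uniformly in $(x_0,v_0)$; the total number of collisions on $[0,T]$ is bounded uniformly in $n$ by Step 1 in the proof of Theorem \ref{T2}. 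For initial data with $\alpha(0)\le C\delta_0$ the flatness condition \eqref{v2.8} pins $f_0$ to a constant, and the velocity lemma confines the entire trajectory to a thin collar of the singular set where $|V_n(s)|$ is controlled by a quantity depending only on $\delta_0$ and $T$, and again $|V_n|\to|V|$ uniformly.

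The main obstacle I foresee is the mismatch of reflection times across iterations: the $n$-th trajectory reflects at slightly different moments than the limit one, so a naive comparison of the vector trajectories $V_n(s)-V(s)$ would exhibit $O(1)$ jumps near those times. The observation that rescues the argument is that specular reflection preserves the Euclidean norm, so the scalar $s\mapsto|V_n(s)|$ is continuous even through reflections, and it is only $|V_n|$ versus $|V|$ that has to be compared. Under the uniform transversality secured by the velocity lemma and the flatness assumption \eqref{v2.8}, the small shift $\tau_j^n-\tau_j=o(1)$ contributes only an $o(1)$ error to $\bigl||V_n|-|V|\bigr|$, completing the uniform convergence.
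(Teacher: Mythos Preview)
Your proposal is correct and follows essentially the same route as the paper: represent $Q^n(t)$ and $Q(t)$ via the forward characteristics from $\mathrm{supp}\,f_0$, deduce $E^n\to E$ uniformly from the hypothesis $f^n\to f$, use Gronwall between reflections together with the uniform bound on the number of bounces supplied by the velocity lemma, and exploit that specular reflection preserves $|V|$ to pass through the slightly mismatched collision times. Your treatment is in fact a bit more careful than the paper's, which does not explicitly separate the near-singular case $\alpha(0)\le C\delta_0$ or spell out the role of transversality in the convergence of collision times.
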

\begin{proof}
Due to the Lemma \ref{L2}, we know that the characteristic curves away from the singular set  in the initial state remain away from it
during their evolution. For these characteristic curves, we can estimate their difference as $n\rightarrow \infty$.
It is similar to the proof of the analogous result on Vlasov-Poisson system (cf. \cite{HV,HV1}).
In fact, since the sequence $f^{n}$ is uniformly bounded on $n$ in the space $C^{1;1,\lambda}_{t;(x,v)}([0,T]\times \bar{\Omega}\times \mathbb{R}^{2})$, similar to the proof of Theorem\ref{T2}, we see that  the number of bounces is uniformly bounded with respect to $n$ in the time interval $[0,T]$.
Moreover, the time series when the characteristic curves of $f^{n}$ bounce to the boundary converge to the time when the charateristics of $f$ collide with the boundary.
Since $E^{n}\rightarrow E$, it follows that the characteristic curves $\big(X^{n}(s;0,x_{0},v_{0}),V^{n}(s;0,x_{0},v_{0})\big)$ of $f^{n}$ converge to the ones $\big(X(s;0,x_{0},v_{0}),V(s;0,x_{0},v_{0})\big)$ of $f$  between bounces. Also we have the following estimate, 
\begin{align}
|X^{n}(S)-X(S)|=&v_{0}\int^{s}_{0}\big(\frac{1}{\sqrt{1+|V^{n}(\tau)|^{2}}}-\frac{1}{\sqrt{1+|V(\tau)|^{2}}}\big) {\rm d}\tau \nonumber\\
&+\int^{s}_{0}\int^{\tau}_{0}\frac{E^{n}(\mu,X^{n}(\mu))-E(\mu,X^{n}(\mu))}{\sqrt{1+|V^{n}(\tau)|^{2}}}{\rm d}\mu{\rm d}\tau \nonumber\\
&+\int^{s}_{0}\int^{\tau}_{0}E(\mu,X(\mu))\big(\frac{1}{\sqrt{1+|V^{n}(\tau)|^{2}}}-\frac{1}{\sqrt{1+|V(\tau)|^{2}}}\big)\nonumber\\
\leq &C(T)\big(\int^{s}_{0}|V^{n}(s)-V(s)|{\rm d}\tau +\int^{s}_{0}|X^{n}(s)-X(s)|{\rm d}\tau\big)\nonumber\\
&+C(T)\|E^{n}-E\|_{L^{\infty}}.\nonumber
\end{align}
Similarly,
\begin{align}
|V^{n}(S)-V(S)|\leq C(T)\int^{s}_{0}|X^{n}(s)-X(s)|{\rm d}\tau +C(T)\|E^{n}-E\|_{L^{\infty}}.\nonumber
\end{align}
Let us define $Z(s)=|X^{n}(S)-X(S)|+|V^{n}(S)-V(S)|$, then
$$Z(s)\leq C(T)\int^{s}_{0}Z(\tau){\rm d}\tau+C(T)\|E^{n}-E\|_{L^{\infty}}.$$
By Gronwall's inequality, it follows that
$$Z(s)\leq C(T)\|E^{n}-E\|_{L^{\infty}}\rightarrow 0.$$
Because the number of bounces is uniformly  bounded and $|V|$ remains unchanged before and after the bounce, we conclude that
the functions $|V^{n}|$ converge uniformly to $|V(s)|$ when $n\rightarrow \infty$. Meanwhile, in accordance with the definition of $Q^{n}(t)$,
we obtain that $Q^{n}(t)\rightarrow Q(t)$, and the proof is completed.
\end{proof}

\section{Global Bound for Q(t)}\label6

In this part, we aim to determine the upper bound  of the function $Q(t)$ in any given time interval $[0,T]$. By doing so, we can subsequently extend the solution of the system
 \eqref{v1.5}-\eqref{v1.9} to intervals of arbitrary length.
\begin{Proposition}\label{P8}
Assume that $f_{0}\in C^{1,\mu}(\Omega\times \mathbb{R}^{2}),\,\,0<\mu<1$, and  $f\in C^{1;1,\mu}_{t,(x,v)}([0,T]\times\bar{\Omega}\times \mathbb{R}^{2})$ solves
the system \eqref{v1.5}-\eqref{v1.9} with $\lambda \in (0,1), 0< T<\infty$. Then there exists $\zeta(T)< \infty$ depending only on $T, Q(0),\|f_{0}\|_{C^{1,\mu}(\Omega\times \mathbb{R}^{2})} $ such that
\begin{align}
Q(t)\leq \zeta(T), \quad 0\leq t\leq T. \label{v6.1}
\end{align}
\end{Proposition}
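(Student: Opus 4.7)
The plan is to close a Gronwall-type inequality of the form $Q(t) \le Q(0) + \int_0^t \Phi(Q(s))\,ds$ with $\Phi$ at most linear, by carefully bounding $\|E(s,\cdot)\|_{L^{\infty}(\Omega)}$ in terms of $Q(s)$ using the two-dimensional Poisson kernel. The starting point is the characteristic system \eqref{v4.1}--\eqref{v4.4}: between bounces $dV/ds = E(s,X(s))$, and at each bounce $V$ is merely reflected by \eqref{v4.4}, so $|V|$ is preserved across collisions. Consequently, for any characteristic $(X(s),V(s))$ issuing from a point of $\operatorname{supp} f_0$,
\begin{equation*}
|V(t)| \le |V(0)| + \int_0^t |E(s,X(s))|\,ds \le Q(0) + \int_0^t \|E(s,\cdot)\|_{L^\infty(\Omega)}\,ds,
\end{equation*}
and taking sup over such characteristics gives $Q(t) \le Q(0) + \int_0^t \|E(s,\cdot)\|_{L^\infty}\,ds$.

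Next, I would bound $\|E(s,\cdot)\|_{L^\infty}$ by the by-now-standard split-kernel argument that already appeared in the proof of Proposition~\ref{P6}. Using Proposition~\ref{P1} to represent $E = \nabla\varphi$ via the Green's function and the gradient estimate $|\nabla_x G(x,y)| \le C/|x-y|$, one splits the domain of integration into $|x-y| \le r$ and $|x-y| \ge r$:
\begin{equation*}
|E(t,x)| \le C\|\rho(t,\cdot)\|_{L^\infty}\, r + C\,\frac{\|\rho(t,\cdot)\|_{L^1}}{r} + \|h\|_{C^{1,\mu}(\partial\Omega)}.
\end{equation*}
Optimizing in $r$ yields $\|E(t,\cdot)\|_{L^\infty} \le C\|\rho(t,\cdot)\|_{L^\infty}^{1/2}\|\rho(t,\cdot)\|_{L^1}^{1/2} + \|h\|_{C^{1,\mu}}$. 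Now invoke the two ingredients at our disposal: mass conservation (which uses the specular reflection condition and is recorded in \eqref{v4.6} for the linear problem, hence for $f$) gives $\|\rho(t,\cdot)\|_{L^1} = \|f_0\|_{L^1}$, while the bound on the velocity support gives $\|\rho(t,\cdot)\|_{L^\infty} \le \|f_0\|_{L^\infty}\,|B_{Q(t)}(0)| = \pi \|f_0\|_{L^\infty}\,Q(t)^2$. Substituting,
\begin{equation*}
\|E(t,\cdot)\|_{L^\infty} \le C\bigl(\|f_0\|_{L^\infty}\|f_0\|_{L^1}\bigr)^{1/2} Q(t) + \|h\|_{C^{1,\mu}} \le C_0\,Q(t) + C_0,
\end{equation*}
with $C_0$ depending only on the initial data and $h$.

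Combining the two displays produces the scalar integral inequality
\begin{equation*}
Q(t) \le Q(0) + C_0\int_0^t \bigl(Q(s) + 1\bigr)\,ds, \qquad 0 \le t \le T,
\end{equation*}
and Gronwall's lemma yields $Q(t) \le \zeta(T) := \bigl(Q(0) + C_0 T\bigr)e^{C_0 T}$, proving \eqref{v6.1}. The 2D setting is essential: in 2D the Poisson kernel $1/|x-y|$ together with $\|\rho\|_{L^\infty} \lesssim Q^2$ delivers $\|E\|_{L^\infty} \lesssim Q$, which is exactly borderline linear for Gronwall; in 3D the analogous chain would yield $\|E\|_{L^\infty} \lesssim Q^2$ and this closure would fail, which is precisely why the 3D relativistic problem remains open.

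The only subtle point I anticipate is justifying that the bounce times do not spoil the integral inequality for $|V(t)|$. Since $f$ belongs to $C^{1;1,\lambda}_{t;(x,v)}$ and the velocity lemma (Lemma~\ref{L2}) together with the argument in the proof of Theorem~\ref{T2} guarantee that any characteristic starting in $\operatorname{supp} f_0$ either stays in the region $\alpha \le C\delta_0$ (where $f$ is constant and $Q$ receives no new contribution) or meets $\partial\Omega$ only finitely many times on $[0,T]$, one may split $[0,t]$ at the bounce times, apply the interior ODE $dV/ds = E$ on each open subinterval, and use $|V(s^-)| = |V(s^+)|$ at each collision to telescope the estimate into the single inequality above. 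Once that bookkeeping is done the rest is the Gronwall calculation just described.
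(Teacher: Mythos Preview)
Your argument is correct and follows the same core strategy as the paper: bound $\|E(t,\cdot)\|_{L^\infty}$ by the split-kernel estimate $\|E\|_{L^\infty}\le C\|\rho\|_{L^\infty}^{1/2}\|\rho\|_{L^1}^{1/2}+\|h\|_{C^{1,\mu}}\le C_0Q(t)+C_0$, feed this into the characteristic ODE, and close by Gronwall. The one organizational difference is your treatment of the boundary. The paper splits into an interior case and a near-boundary case; in the latter it passes to the local coordinates $(l,x_\bot,\omega,v_\bot)$ and bounds $|d\omega/ds|+|dv_\bot/ds|\le \|E\|_{L^\infty}+CQ(s)$ directly from \eqref{v2.3}, picking up an extra curvature term that is nonetheless linear in $Q$. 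You instead observe that the specular reflection \eqref{v1.11} is an isometry, so $|V|$ is unchanged at each bounce, and simply telescope the interior estimate $|V(t)|\le |V(0)|+\int_0^t|E|\,ds$ across the finitely many bounce times guaranteed by the velocity lemma. Your route is cleaner and avoids the local-coordinate computation entirely; the paper's route makes the geometric contribution near $\partial\Omega$ explicit but is not needed for this particular bound. Both yield the same linear Gronwall inequality and the same $\zeta(T)$.
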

\begin{proof}
We consider two cases on $Q(t)$, one case is within the domain, and the other case is near the boundary of the domain.
  
 {\bf Case 1.} We consider the characteristic equations within the domain.
By the representation formula  \eqref{v5.4}, we have
\begin{align}
|E(t,x)|&\leq C \int_{|x-y|\leq r}\frac{1}{|x-y|}|\rho(y)|{\rm d}y +C \int_{|x-y|\geq r}\frac{1}{|x-y|}|\rho(y)|{\rm d}y + \|h\|_{C^{1,\mu}}\nonumber\\
&\leq C\|\rho\|_{L^{\infty}}r+C\frac{\|\rho\|_{L^{1}}}{r}+ \|h\|_{C^{1,\mu}}\nonumber\\
&\leq C Q(t) \|f_{0}\|^{\frac{1}{2}}_{L^{\infty}}\|f_{0}\|^{\frac{1}{2}}_{L^{1}}+ \|h\|_{C^{1,\mu}},\nonumber
\end{align}
then, on the basis of $\frac{d V}{ds}= E(s,X(s))$, it follows that
\begin{align}
|V(t)|\leq C+C\int^{t}_{0}Q(s){\rm d}s,\nonumber
\end{align}
where $C$ depends only on $T$ and the initial data.
  
 {\bf Case 2.}  We handle the characteristic equations near the boundary. By \eqref{v2.3},
\begin{align}
&\frac{d \omega}{d s}=E_{l}-\frac{1}{\sqrt{1+|V|^{2}}}\frac{k v_{\bot}\omega}{1-kx_{\bot}},\nonumber\\
&\frac{d v_{\bot}}{d s}=E_{\bot}-\frac{1}{\sqrt{1+|V|^{2}}}\frac{k \omega^{2}}{1-kx_{\bot}}.\nonumber
\end{align}
Then, we obtain
\begin{align}
\big|\frac{d \omega}{d s}\big|+\big|\frac{d v_{\bot}}{d s}\big|\leq \|E\|_{L^{\infty}}+ CQ(s).\nonumber
\end{align}
Thus, we also get
\begin{align}
|V(t)|\leq C+C\int^{t}_{0}Q(s){\rm d}s,\nonumber
\end{align}
where $C$ depends only on $T,\,\partial\Omega$,and the initial data.

Combining the above two cases as well as the definition of $Q(t)$ yields
\begin{align}
Q(t)\leq C+C\int^{t}_{0}Q(s){\rm d}s.\nonumber
\end{align}
Therefore, based on Gronwall's inequality, there exists $\zeta(T)$ such that
$$Q(t)\leq \zeta(T),\quad 0\leq t\leq T.$$
\end{proof}

Finally, we provide the proof of Theorem \ref{T1}.
\begin{proof}[Proof of Theorem \ref{T1}]
In order to obtain the global existence of the solution of the system \eqref{v1.5}-\eqref{v1.9},   we need to establish that the function sequence $f^{n}$ of the iterative equation \eqref{v3.1}-\eqref{v3.6} converges to the solution of \eqref{v1.5}-\eqref{v1.9} for any time $t$. If the functions $Q^{n}(t)$ are uniformly bounded in any compact set about $t$, then we can conclude the desired limit on the basis of Propositon \ref{P5}.

  For this reason, we will first set 
\begin{align}
J(t):=\sup_{n}Q^{n}(t),\quad \lim_{t\rightarrow T_{\max}}J(t)=\infty. \nonumber
\end{align}
It is easy to see that $J(t)$ is a monotonically increasing function with respect to $t$. It suffices to prove that $T_{\max}=\infty$, which can be derived by contradiction. If $T_{\max} <\infty$,
let $\zeta(T_{\max})$ be as in  Proposition \ref{P8}, then based on  Proposition \ref{P6}, we choose
$\tau=\tau(2\zeta(T_{\max}),\|f_{0}\|_{L^{\infty}})$. From the definition of $T_{\max} $, it can be seen that $Q^{n}(t)$ is uniformly bounded
in the time interval $[0,T_{\max}-\frac{\tau}{2} ]$. Therefore,  making use of Proposition \ref{P5} as well as Proposition \ref{P7} yields
\begin{align}
&f^{n}\rightarrow f \quad  in \,\, C^{\nu;1,\lambda}_{t,(x,v)},  \quad for\quad 0\leq t\leq T_{\max}-\frac{\tau}{2},\nonumber \\
&Q^{n}(t)\rightarrow Q(t) \quad for \quad 0\leq t\leq T_{\max}-\frac{\tau}{2}.\nonumber
\end{align}
Specially, let $\tilde{t}= T_{\max}-\frac{\tau}{2}$, we have
$$\lim_{n\rightarrow \infty}Q^{n}(\tilde{t})=Q(\tilde{t})\leq \zeta(T_{\max}).$$
So, if $n\geq n_{0}$ and $n_{0}$ is large enough, $Q^{n}(\tilde{t})\leq 2\zeta(T_{\max})$;  and   in the light of Proposition \eqref{v5.6}, we know that for
$0\leq t\leq T_{\max}+\frac{\tau}{2}$ and $n\geq n_{0}$,
$$Q^{n}(t)\leq 4\zeta(T_{\max}).$$
Consequently, it follows that $J(t)$ is bounded as $t\rightarrow T_{\max}.$ This contradicts the definition of $T_{\max}.$
So far, we have proven the global existence of the solution of the system \eqref{v1.5}-\eqref{v1.9} in $C^{1;1,\lambda}_{t,(x,v)}$ for some $0<\lambda <\mu.$

Finally, we  illustrate the uniqueness in Theorem \ref{T1}. Suppose that $f_{1},f_{2}$ are two solutions of \eqref{v1.5}-\eqref{v1.9} and satisfy the same initial and boundary conditions, 
similarly to the derivation of \eqref{v5.33}, we have
\begin{align}
\|f_{1}(t)-f_{2}(t)\|_{L^{1}}\leq C(T)\int^{t}_{0}\|f_{1}(s)-f_{2}(s)\|_{L^{1}}{\rm d}s. \nonumber
\end{align}
Naturally,  $f_{1}=f_{2}$ holds. We completes the proof of Theorem \ref{T1}.
\end{proof}

\section{Global Existence of Solutions with Dirichlet Boundary Conditions}

In the case of Dirichlet boundary conditions for the electric potential, we shall show the global existence of classical solutions for general convex smooth domains in this section. It is necessary to make precise estimates for the electric potential $ \varphi(t,x)$ and its first derivatives near the boundary of the domain because difficulties near the singular set in the Dirichlet problem could primarily be resolved by introducing a new method to obtain an analogous velocity lemma. Specifically, we have modified the local coordinate variables 
$$\alpha=\frac{v_{\bot}^{2}}{2}-\varphi(t,x)-L(t,l,0,\omega,v_{\bot})x_{\bot}$$ based on the previously mentioned Newman boundary conditions. We omit the proofs of the other claims in the earlier sections because they still hold in the Dirichlet boundary case. We then focus mainly on proving the velocity lemma.

Let a point $x\in\Omega$ be fixed, and $x_{0}$ represent the tangent point of $x$. To examine the evolution of the characteristic curves starting from  the singular set  $\{(x,v)\in \partial\Omega\times \mathbb{R}^{2}, v\cdot n_{x}=0\}$, it can be hypothesized that  $\tilde{x}$ is near the boundary $\partial\Omega$ and that $x_{0}\in\partial\Omega$ is the point nearest to it. By employing rotations and translations, one can set $x_{0}=(0,0), \tilde{x}=(\tilde{x}_{1},0)$ and $\Omega \subset \mathbb{R}^{2}_{+}\equiv \{(x_{1},x_{2})\in \mathbb{R}^{2}; x_{1}>0\}$,  and we can easily establish that the tangent line to $\partial\Omega$ at $x_{0}$  is represented by the expression $\partial\Omega$ at $x_{0}$ .

A concise elucidation of local coordinate variables $(\alpha,\beta)$ is provided as follows,
\begin{align}
\alpha(t,l,x_{\bot},\omega,v_{\bot})&=\frac{v_{\bot}^{2}}{2}-L(t,l,0,\omega,v_{\bot})x_{\bot}-\varphi(t,x)\nonumber\\
&=\frac{v_{\bot}^{2}}{2}+\big(-\sqrt{1+|v|^{2}}E_{\bot}(t,l,0)+k\omega^{2}\big)x_{\bot}-\varphi(t,l,x_{\bot}),\label{v7.1}\\
\beta(t, l,  x_{\bot}, \omega, v_{\bot})&=2\pi H(t,l,x_{\bot},\omega,v_{\bot})+\pi (1-\frac{v_{\bot}}{\sqrt{2\alpha}}).\label{v7.2}
\end{align}

\begin{Remark}
Based on the argument below, we can conclude that $\alpha$  is nonnegative and is equivalent to $x_{\bot}+v^{2}_{\bot}$. First, deriving $\varphi \leq 0, x\in\Omega$ is a straightforward process using the equation for $\varphi$ and the maximum principle. Second, $E_{\bot}=-\frac{\partial \varphi}{\partial n}< 0$ as given by Hopf Lemma, and $k\geq 0$ by virtue of the convexity of $\Omega$.
\end{Remark}

It is now essential that we prove some technical estimates of the Newtonian potential and its first derivatives in order to obtain the velocity lemma further.

Let $T>0$, from the equation \eqref{v1.5}, it can be concluded that $\rho(t,x)$ solves
\begin{align}
\partial_{t}\rho+\nabla\cdot j=0,\qquad  (t,x)\in[0,T]\times\Omega,\label{v7.3}
\end{align}
with $$j=\int\hat{v}f(t,x,v){\rm d}v \in \big(C^{1}([0,T]\times\Omega)\big)^{3}.$$

\begin{Lemma}\label{L3}
Assume further that  $\varphi(t,x)$ satisfies the following boundary value problem
\begin{align}
\Delta \varphi(t,x)&=\rho(t,x), \qquad (t,x)\in[0,T]\times\Omega,\nonumber\\
\varphi(t,x)&=0,\qquad \qquad x\in\partial\Omega.\nonumber
\end{align}
Then we have
\begin{align}
\Big|\frac{\partial \varphi}{\partial t}(t,\tilde{x})\Big|\leq C\tilde{x}_{1}\big(1+|\log\tilde{x}_{1}|+|\log\tilde{x}_{1}|^{2}\big),\nonumber
\end{align}
where $C>0$ depends only on $L=diam \Omega$ and $\|j\|_{\infty}$.
\end{Lemma}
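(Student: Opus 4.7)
The plan is to differentiate the Poisson equation $\Delta\varphi=\rho$ in $t$ and use the continuity equation \eqref{v7.3} to obtain
\begin{align}
\Delta\varphi_t=-\nabla\cdot j\ \text{in}\ \Omega,\qquad \varphi_t=0\ \text{on}\ \partial\Omega.\nonumber
\end{align}
Representing $\varphi_t$ through the Dirichlet Green's function $G_\Omega$ and integrating by parts in $y$, I would use $G_\Omega(\tilde{x},y)=0$ on $\partial\Omega$ to kill the boundary term, yielding
\begin{align}
\varphi_t(t,\tilde{x})=\int_\Omega\nabla_y G_\Omega(\tilde{x},y)\cdot j(t,y)\,{\rm d}y.\nonumber
\end{align}
The claim therefore reduces to the purely geometric estimate
\begin{align}
\int_\Omega|\nabla_y G_\Omega(\tilde{x},y)|\,{\rm d}y\le C\tilde{x}_1\bigl(1+|\log\tilde{x}_1|+|\log\tilde{x}_1|^2\bigr).\nonumber
\end{align}

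Next I would locally flatten $\partial\Omega$ near $x_0$ and treat first the model half-space Green's function
\begin{align}
G_+(x,y)=\frac{1}{2\pi}\log\frac{|x-y|}{|x-y^*|},\qquad y^*=(-y_1,y_2),\nonumber
\end{align}
associated with the tangent line to $\partial\Omega$ at $x_0$. Combining the two logarithmic contributions over a common denominator exhibits an explicit factor of $\tilde{x}_1$ in both components of $\nabla_y G_+$ and produces the pointwise bound
\begin{align}
|\nabla_y G_+(\tilde{x},y)|\le C\min\Bigl(\tfrac{1}{|\tilde{x}-y|},\ \tfrac{\tilde{x}_1}{|\tilde{x}-y|^2}\Bigr).\nonumber
\end{align}
Splitting the integration into $\{|y-\tilde{x}|\le\tilde{x}_1\}$ and $\{\tilde{x}_1<|y-\tilde{x}|\le L\}$ and integrating in polar coordinates gives contributions of size $C\tilde{x}_1$ and $C\tilde{x}_1|\log\tilde{x}_1|$ respectively, which account for the first two terms of the stated bound.

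The main obstacle is to handle the corrector $R(x,y):=G_\Omega(x,y)-G_+(x,y)$, which is harmonic in $y\in\Omega$ and carries nonzero boundary data $-G_+(\tilde{x},\cdot)|_{\partial\Omega}$ arising from the curvature mismatch between $\partial\Omega$ and its tangent line at $x_0$. By the preceding step these data are of size $\tilde{x}_1(1+|\log\tilde{x}_1|)$. I would construct a barrier of the form $C\tilde{x}_1(1+|\log\tilde{x}_1|)\,\mathrm{dist}(y,\partial\Omega)/|\tilde{x}-y|$ augmented by a smooth harmonic reference, apply the maximum principle to control $|R|$, and then pass from $|R|$ to $|\nabla_y R|$ via Schauder-type boundary regularity (this is exactly where the "revised boundary estimates" and "suitable supersolutions" alluded to in the introduction enter). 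In two dimensions, propagating a boundary bound through the fundamental solution $\log|\cdot|$ to its gradient costs one additional logarithmic factor, which is precisely what produces the $|\log\tilde{x}_1|^2$ contribution. Integrating the resulting estimate on $\nabla_y R$ and adding the half-space piece gives the stated inequality; the principal technical difficulty is designing the barrier to respect the $C^5$ regularity of $\partial\Omega$ without losing a full power of $\tilde{x}_1$ when passing from $R$ to $\nabla_y R$ near $\partial\Omega$.
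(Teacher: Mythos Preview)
Your reduction to $\int_\Omega|\nabla_y G_\Omega(\tilde{x},y)|\,{\rm d}y$ via the continuity equation and integration by parts is exactly what the paper does, and your half-space contribution is correct. The gap is in the corrector. You treat $R(\tilde{x},\cdot)=G_\Omega(\tilde{x},\cdot)-G_+(\tilde{x},\cdot)$ as harmonic in $y$, propose the barrier $C\tilde{x}_1(1+|\log\tilde{x}_1|)\,\mathrm{dist}(y,\partial\Omega)/|\tilde{x}-y|$, and then pass to $\nabla_y R$ by Schauder, asserting this last step ``costs one logarithm.'' But that expression is neither harmonic nor visibly a supersolution, and Schauder estimates are scale-invariant---they do not by themselves generate logarithms. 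You have not located the mechanism that actually produces the $|\log\tilde{x}_1|^2$.

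The paper's argument is structurally different. It rescales by $R=\tilde{x}_1$ so that $\tilde{X}=(1,0)$ is fixed and the goal becomes $\int_{\Omega_R}|\nabla_Y H(\tilde{X},Y)|\,{\rm d}Y\le C(1+|\log R|+|\log R|^2)$, then splits into $|Y|\le 4$ and $|Y|\ge 4$. The key device for the corrector $W=H-\bar H$ is harmonicity in the \emph{first} variable: $\psi(X,Y)=\nabla_Y W(X,Y)$ is harmonic in $X$ with explicit trace on $\partial\Omega_R$, and a Poisson-integral bound evaluated at $\tilde{X}$ controls the region $|Y|\le 4$ with a single logarithm. For $|Y|\ge 4$ the paper rescales a second time by $|Y|$, bringing $\tilde{X}$ to distance $1/|Y|$ from the boundary; since $H(\cdot,Y)$ (hence $\nabla_Y H(\cdot,Y)$) vanishes on $\partial\Omega_R$, boundary regularity together with a one-step Taylor expansion in the first variable gives pointwise decay of order $(1+|\log R|+|\log\mathrm{dist}(Y/|Y|,\partial\hat\Omega)|)/|Y|^2$. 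The squared logarithm then emerges from a dyadic summation over annuli $4\cdot 2^n\le|Y|\le 4\cdot 2^{n+1}$ out to $|Y|\sim L/R$: there are about $|\log R|$ shells, each contributing about $|\log R|$. Both the harmonicity-in-$x$ trick and this far-field dyadic structure are absent from your sketch.
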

\begin{proof}
Let's set $R=|\tilde{x}|\ll 1$, and do the following change of variables,
$$X=\frac{x}{R},\quad Y=\frac{y}{R},\quad \tilde{X}=\frac{\tilde{x}}{R}=(1,0).$$
The Green function for the domain $\Omega$ is denoted as $G$, therefore $\varphi$  can be expressed as
$$\varphi(t,x)=\int_{\Omega}\rho(t,y)G(x,y){\rm d}y,$$
then, letting $\Omega_{R}=\frac{\Omega}{R}$ and $H(X,Y):=G(RX,RY)$,   we obtain
\begin{align}
|\varphi_{t}(t,x)|&\leq\|j\|_{\infty}\int_{\Omega}|\nabla_{y}G(x,y)|{\rm d}y \nonumber\\
&=R\|j\|_{\infty}\int_{\Omega_{R}}|\nabla_{Y}H(X,Y)|{\rm d}Y.\nonumber
\end{align}
We therefore only need to prove that
\begin{align}
\int_{\Omega_{R}}|\nabla_{Y}H(X,Y)|{\rm d}Y\leq C\big(1+|\log R|+|\log R|^{2}\big).\label{v7.4}
\end{align}
The integration region must therefore be split into two parts, which are $|Y|\leq 4$ and $|Y|\geq 4$.

{\bf Case 1.}  When $|Y|\leq 4$, let $X=(X_{1},X_{2}),\,X^{*}=(-X_{1},X_{2}) $ is the reflextion of X with respect to the coordinate axis $\{X_{1}=0\}$.
First of all we define
$$\bar{H}(X,Y)=-\frac{1}{2\pi}\big(\log|Y-X|-\log|Y-X^{*}|\big),$$
which is the Green function for the half-space restricted to $\Omega_{R}\times\Omega_{R}$.

From a straightforward computation, it can be deduced that
\begin{align}
\int_{|Y|\leq 4}|\nabla_{Y}\bar{H}(\tilde{X},Y)|{\rm d}Y&\leq \frac{1}{2\pi}\int_{|Y|\leq 4}\big(\frac{1}{|Y-\tilde{X}|}+\frac{1}{|Y-\tilde{X}^{*}|}\big){\rm d}Y\nonumber\\
&\leq C. \nonumber
\end{align}

Let $W(X,Y)=H(X,Y)-\bar{H}(X,Y)$ and $\psi(X,Y)=\nabla_{Y}W(X,Y)$, then the function $\psi(X,Y)$ satisfies the following system:
\begin{align}
&\Delta_{X}\psi(X,Y)=0, \qquad X,Y\in\Omega_{R},\nonumber\\
&\psi(X,Y)=-\frac{1}{2\pi}\nabla_{Y}\big(\log|Y-X|-\log|Y-X^{*}|\big),\quad X\in\partial\Omega_{R}.\nonumber
\end{align}
If $\text{dist}(Y,\partial\Omega_{R})\geq 1$, then $|Y-X|\geq 1$ for all $X\in\partial\Omega_{R}$,  and we get
\begin{align}
|\psi(X,Y)|&=\frac{1}{2\pi}\big|\frac{Y-X}{|Y-X|^{2}}-\frac{Y-X^{*}}{|Y-X^{*}|^{2}}\big|\nonumber\\
&\leq C \big(\frac{1}{|Y-X|}+\frac{1}{|Y-X^{*}|}\big)\leq C\frac{1}{|Y-X|}\leq C.\nonumber
\end{align}
 According to the maximum principle, there is a constant $C>0$ uniformly with respect to $Y$, such that
 \begin{align}
|\psi(X,Y)|\leq C, \qquad  \forall X\in \Omega_{R}.\nonumber
\end{align}

Although the corresponding estimates  of $\psi(X,Y)$  become more intricate when the distance between $Y$ and $\partial\Omega_{R}$ is less than 1, the question can be resolved through the construction of a supersolution using the Poisson integral formula.
 Locating a boundary point $Y_{0}\in\partial\Omega_{R}$ such that $\text{dist}(Y,Y_{0})=\text{dist}(Y,\partial\Omega_{R})$ is a simple task. Then the following facts are true for $X\in \partial\Omega_{R}$.

If $|X-Y_{0}|\geq 2$, the triangle inequality  yields $|X-Y|\geq 1$ and then we get
\begin{align}
|\psi(X,Y)|\leq C\frac{1}{|X-Y|}\leq C. \nonumber
\end{align}

If $|X-Y_{0}|\leq R$, then we have
$$|\psi(X,Y)|\leq C\frac{1}{|X-Y|}.$$

If $R\leq|X-Y_{0}|\leq 2$, with the help of the Taylor theorem and the convexity of $\Omega$, we can deduce
$$|\psi(X,Y)|\leq \frac{C}{|X-Y_{0}|^{2}}.$$

To sum up,  we define
\begin{equation}\nonumber
\tilde{\psi}(X,Y)=\left\{\begin{aligned}
 &C, \qquad  if\,\, |X-Y_{0}|\geq 2, \\
&\frac{C}{|X-Y|}, \quad  if\,\, |X-Y_{0}|\geq R,\\
&\frac{C}{|X-Y_{0}|^{2}}, \quad  if\,\, R\leq|X-Y_{0}|\leq 2.\\
\end{aligned}  \right.
\end{equation}
Thus, for $|Y|\leq 4$ and $\text{dist}(Y,\partial\Omega_{R})\leq 1$, by means of the maximum principle, we have the following estimate, 
\begin{align}
|\psi(\tilde{X},Y)|\leq & C\int_{\partial\Omega_{R}}\frac{1}{|\tilde{X}-\xi|}|\tilde{\psi}(\xi,Y)|{\rm d}l
\leq C\int_{\partial\Omega_{R}}\frac{1}{1+|\xi|}|\tilde{\psi}(\xi,Y)|{\rm d}l\nonumber\\
=& C\int_{|\xi-Y_{0}|\leq R}\frac{1}{1+|\xi|}|\tilde{\psi}(\xi,Y)|{\rm d}l
+C\int_{R\leq|\xi-Y_{0}|\leq 2}\frac{1}{1+|\xi|}|\tilde{\psi}(\xi,Y)|{\rm d}l\nonumber\\
&+C\int_{|\xi-Y_{0}|\geq 2}\frac{1}{1+|\xi|}|\tilde{\psi}(\xi,Y)|{\rm d}l.\nonumber
\end{align}
Next, we estimate each item on the right end of the above equality,
\begin{align}
\int_{|\xi-Y_{0}|\geq 2}\frac{1}{1+|\xi|}|\tilde{\psi}(\xi,Y)|{\rm d}l&\leq C \int_{|\xi-Y_{0}|\geq 2}\frac{1}{1+|\xi|}{\rm d}l
\leq C |\log R|,\nonumber\\
\int_{R\leq|\xi-Y_{0}|\leq 2}\frac{1}{1+|\xi|}|\tilde{\psi}(\xi,Y)|{\rm d}l&\leq C \int_{R\leq|\xi-Y_{0}|\leq 2}\frac{1}{1+|\xi|}\cdot\frac{1}{|\xi-Y_{0}|^{2}}{\rm d}l
\leq C ,\nonumber
\end{align}
and leting $\eta=Y-Y_{0}$ and $|X-Y|=|(X-Y_{0})-\eta|$,   we have
\begin{align}
\int_{|\xi-Y_{0}|\leq R}\frac{1}{1+|\xi|}|\tilde{\psi}(\xi,Y)|{\rm d}l&\leq C \int_{|\xi-Y_{0}|\leq R}\frac{1}{1+|\xi|}\frac{1}{|\xi-Y|}{\rm d}l\nonumber\\
&=C\int_{|\xi-Y_{0}|\leq R}\frac{1}{1+|\xi|}\frac{1}{|\xi-Y_{0}-\eta|}{\rm d}l\nonumber\\
&\leq  C \int_{|\xi-Y_{0}|\leq R}\frac{1}{|\xi-Y_{0}|+|\eta|}{\rm d}l\nonumber\\
&\leq  C \int_{|\xi-Y_{0}|\leq R}\frac{1}{|\xi|+|\eta|}{\rm d}\xi
\leq C\log (1+\frac{R}{|\eta|})\nonumber\\
&\leq C+ C |\log R|+C|\log dist(Y,\partial\Omega_{R})|.\nonumber
\end{align}
Thus, together we obtain
\begin{align}
|\psi(\tilde{X},Y)|\leq C+ C |\log R|+C|\log dist(Y,\partial\Omega_{R})|,\nonumber
\end{align}
and hence,
\begin{align}
\int_{dist(Y,\partial\Omega_{R})\leq 1, |Y|\leq 4}|\psi(\tilde{X},Y)|{\rm d}Y&\leq C + C |\log R|\nonumber\\
&+C\int_{dist(Y,\partial\Omega_{R})\leq 1, |Y|\leq 4}|\log dist(Y,\partial\Omega_{R})|{\rm d}Y\nonumber\\
&\leq C+ C |\log R|.\nonumber
\end{align}
Therefore,  we deduce that
\begin{align}
\int_{ |Y|\leq 4}|\nabla_{Y}H(\tilde{X},Y)|{\rm d}Y&\leq \int_{ |Y|\leq 4}|\nabla_{Y}\bar{H}(\tilde{X},Y)|{\rm d}Y +\int_{ |Y|\leq 4}|\psi(\tilde{X},Y)|{\rm d}Y\nonumber\\
&\leq C+C|\log R|. \nonumber
\end{align}

{\bf Case 2.}  If $|Y|\geq 4$, we fix $Y=Y_{*}$, and rescale the  variables by $\eta=\frac{Y}{|Y_{*}|}, \zeta=\frac{X}{|Y_{*}|}$ such that 
$$\zeta,\eta\in \hat{\Omega}:=\frac{\bar{\Omega}_{R}}{|Y_{*}|}.$$
Define $g(\zeta,\eta)=H(|Y_{*}|\zeta,|Y_{*}|\eta)=H(X,Y)$. With the help of a change of variables, $g(\zeta,\eta)$ satisfies
$$\Delta_{\zeta}g(\zeta,\eta)=\frac{1}{|Y_{*}|}\delta(X-Y),$$
from the fact $\Delta_{X}H(X,Y)=\delta(X-Y)$.

Furthermore, let $\phi(\zeta,\eta)=|Y_{*}|\nabla_{\eta}g(\zeta,\eta)$, we obtain
\begin{align}
\Delta_{\zeta}\phi(\zeta,\eta)&=\nabla_{\eta}\delta(\zeta-\eta),\qquad   \zeta,\eta\in\hat{\Omega},\nonumber\\
\phi(\zeta,\eta)&=0,\qquad\qquad\qquad \zeta\in\partial\hat{\Omega}.\nonumber
\end{align}

Subsequently, we must further consider two cases depending on whether point $\eta$ is close to the boundary $\partial\hat{\Omega}$ or not.
\\
{\bf Case 2-1.} If $\text{dist}(\eta,\partial\hat{\Omega})\geq\frac{1}{10}$, $\psi(\zeta,\eta)$ is defined by
$$\psi(\zeta,\eta)=\phi(\zeta,\eta)+\frac{1}{2\pi}\nabla_{\eta}\log|\zeta-\eta|.$$
Then, the function $\psi(\zeta,\eta)$ satisfies the following system:
\begin{align}
\Delta_{\zeta}\psi(\zeta,\eta)&=0, \qquad \quad \zeta,\eta\in\hat{\Omega},\nonumber\\
\psi(\zeta,\eta)&=\frac{1}{2\pi}\nabla_{\eta}\log|\zeta-\eta|,\quad \zeta\in\partial\hat{\Omega}.\nonumber
\end{align}
The assumption yields $|\psi(\zeta,\eta)|\leq C$ for all $\zeta\in\partial\hat{\Omega}$. In the light of the maximum principle, we have
\begin{align}
|\psi(\zeta,\eta)|\leq C, \qquad \forall \zeta\in\partial\hat{\Omega}.\nonumber
\end{align}
Given that we are now considering the confined region as $\{(\zeta,\eta)|\zeta\in\hat{\Omega}, |\zeta|\leq\frac{1}{2},|\eta|=1\}$ and applying the boundary regularity theory to the Laplace operator, the following estimate is valid,
\begin{align}
|\nabla_{\zeta}\psi(\zeta,\eta)|\leq C,  \qquad \forall \zeta\in\hat{\Omega}.\nonumber
\end{align}
Thus, for the restricted region it can be concluded that $|\nabla_{\zeta}\phi(\zeta,\eta)|\leq C,$  since $\phi(0,\eta)=0$. By the Taylor theorem, we get
\begin{align}
|\phi(\tilde{\zeta},\eta)|\leq C \text{dist}(\tilde{\zeta},\partial\hat{\Omega})=\frac{C}{|Y_{*}|}.\nonumber
\end{align}
Therefore, by the above range of variable substitution, we finally obtain that
$$|\nabla_{Y}H(\tilde{X},Y_{*})|\leq\frac{C}{|Y_{*}|^{3}}.$$
{\bf Case 2-2.} If $\text{dist}(\eta,\partial\hat{\Omega})\leq\frac{1}{10}$, we fix $\eta=\eta_{0}$ and define $\bar{\eta}_{0}$ as the boundary point closest to $\eta_{0}$.
 Let $\zeta^{*}$ as the reflection point
with respect to the tangent line at $\bar{\eta}_{0}$, namely,
\begin{align}
\zeta^{*}=\zeta+2\text{dist}(\zeta,\{\eta\in \mathbb{R}^{2},(\eta-\bar{\eta}_{0})\cdot n(\bar{\eta}_{0})=0\})n(\bar{\eta}_{0}),\nonumber
\end{align}
where $n(\bar{\eta}_{0})$ is the outward normal vector. The functions $\bar{g}(\zeta,\eta),\omega(\zeta,\eta)$ are defined as follows,
\begin{align}
\bar{g}(\zeta,\eta)&=|Y_{*}|\nabla_{\eta}g(\zeta,\eta),\nonumber\\
\omega(\zeta,\eta)&=\bar{g}(\zeta,\eta)+\frac{1}{2\pi}\nabla_{\eta}\big(\log|\zeta-\eta|-\log|\zeta^{*}-\eta|\big).\nonumber
\end{align}
Then we have
\begin{align}
&\Delta_{\zeta}\omega(\zeta,\eta)=0, \qquad \qquad\qquad \zeta,\eta\in\hat{\Omega},\nonumber\\
&\omega(\zeta,\eta)=\frac{1}{2\pi}\big(\frac{\zeta-\eta}{|\zeta-\eta|^{2}}-\frac{\zeta^{*}-\eta}{|\zeta^{*}-\eta|^{2}}\big),\quad \zeta\in\partial\hat{\Omega}.\nonumber
\end{align}
Now, for $\zeta\in\partial\hat{\Omega}$, by the triangle inequality, the following estimate holds

\begin{equation}\nonumber
|\omega(\zeta,\eta_{0})|\leq\left\{\begin{aligned}
 &C, \qquad  if\,\, |\zeta-\bar{\eta}_{0}|\geq \frac{1}{8}, \\
&\frac{C}{|\zeta-\bar{\eta}_{0}|+|\eta_{0}-\bar{\eta}_{0}|}, \quad  if\,\, |\zeta-\bar{\eta}_{0}|\leq \frac{1}{8}.\\
\end{aligned}  \right.
\end{equation}

We denote $D=\text{dist}(\eta_{0},\partial\hat{\Omega})=|\eta_{0}-\bar{\eta}_{0}|$, and apply the Poisson kernel formula to obtain, for $\zeta\in \mathbf{B}_{\frac{1}{|Y_{*}|}}(0)\cap\hat{\Omega}$,
\begin{align}
|\omega(\zeta,\eta_{0})|&\leq\int_{\partial\hat{\Omega}}\frac{1}{|\zeta-\beta|}|\omega(\beta,\eta_{0})|{\rm d}l\nonumber\\
&\leq\int_{|\beta-\bar{\eta}_{0}|\leq\frac{1}{8}}\frac{1}{|\zeta-\beta|}\cdot\frac{C}{|\zeta-\bar{\eta}_{0}|+|\eta_{0}-\bar{\eta}_{0}|}{\rm d}l+
\int_{|\beta-\bar{\eta}_{0}|\geq\frac{1}{8}}\frac{C}{|\zeta-\beta|}{\rm d}l\nonumber\\
&=:M_{1}+M_{2}\nonumber.
\end{align}

First, the estimation of $M_{1}$ is performed  as follows, since
$$|\eta_{0}|=1,\,\, |\eta_{0}-\bar{\eta}_{0}|\leq\frac{1}{10},\,\, |\zeta|\leq \frac{1}{4},$$
we have
$$|\zeta-\bar{\eta}_{0}|\geq\frac{13}{20},$$
then, for $\beta\in \partial\hat{\Omega}$ and $|\beta-\bar{\eta}_{0}|\leq\frac{1}{8}$, the triangle inequality yields
$$|\zeta-\beta|\geq\frac{21}{40}.$$
Thus,
\begin{align}
M_{1}\leq C\int_{0}^{\frac{1}{8}}\frac{1}{\mu+D}{\rm d}\mu\leq C|\log D|.\nonumber
\end{align}

Second, we estimate  $M_{2}$.  For $\beta\in \partial\hat{\Omega}$ and $|\beta-\bar{\eta}_{0}|\geq\frac{1}{8}$, due to
$$|\tilde{\zeta}|=|\frac{|\tilde{X}|}{|Y_{*}|}|=\frac{1}{|Y_{*}|},\,\,\text{dist}(\tilde{\zeta},\partial\hat{\Omega})=\frac{1}{|Y_{*}|},\,\,
|\tilde{\zeta}-|\leq\frac{1}{10},\,\, |\tilde{\zeta}-\beta|\geq \frac{1}{|Y_{*}|} ,$$
we get,
$$|\tilde{\zeta}-\beta|\geq \frac{1}{3}\big(\frac{1}{|Y_{*}|}+|\beta|\big),$$
therefore,
\begin{align}
M_{2}&\leq C\int_{|\beta-\bar{\eta}_{0}|\geq\frac{1}{8}}\frac{|Y_{*}|}{1+|Y_{*}||\beta|}{\rm d}l\nonumber\\
&\leq C|Y_{*}|\int^{\frac{C}{|Y_{*}|R}}_{0}\frac{1}{1+|Y_{*}|\mu}{\rm d}\mu\nonumber\\
&\leq C |\log R|.\nonumber
\end{align}

Finally, we obtain
\begin{align}
|\omega(\tilde{\zeta},\eta_{0})|&\leq C\big(|\log R|+|\log D|\big),\nonumber\\
|\bar{g}(\tilde{\zeta},\eta_{0})|&\leq C\big(1+|\log R|+|\log D|\big),\nonumber
\end{align}
thus, by the above range of variable substitution, we have
\begin{align}
|\nabla_{Y}H(\tilde{X},Y_{*})|\leq C\big(\frac{1}{|Y_{*}|^{2}}+\frac{1}{|Y_{*}|^{2}}|\log R|+\frac{1}{|Y_{*}|^{2}}|\log {\rm dist}(\frac{Y_{*}}{|Y_{*}|}.\partial\hat{\Omega}) |\big).\nonumber
\end{align}
Therefore,  if $|Y|\geq 4$,
\begin{align}
\int_{4\leq |Y|\leq \frac{L}{R}}|\nabla_{Y}H(\tilde{X},Y)|{\rm d}^{2}Y
 \leq C & \int_{4\leq |Y|\leq \frac{L}{R}}\frac{1}{|Y|^{2}}\big(1+|\log R| \nonumber\\
&+|\log dist(\frac{Y}{|Y|},\partial\hat{\Omega})|\big) {\rm d}^{2}Y \nonumber\\
\leq C & \big(|\log R|+|\log R|^{2}\big),\nonumber
\end{align}
where the following integral estimation has been used,
\begin{align}
\int_{4\leq |Y|\leq \frac{L}{R}}&\frac{1}{|Y|^{2}}|\log dist(\frac{Y}{|Y|},\partial\hat{\Omega})|{\rm d}^{2}Y \nonumber\\
&\leq\sum^{|\log_{2}R|}_{n=0}\int_{4\cdot 2^{n}\leq |Y|\leq 4\cdot 2^{n+1} }\frac{1}{|Y|^{2}}|\log dist(\frac{Y}{|Y|},\partial\hat{\Omega})|{\rm d}^{2}Y \nonumber\\
&\leq\sum^{|\log_{2}R|}_{n=0}\int_{4\leq |Z|\leq 8 }\frac{1}{|Z|^{2}}|\log dist(\frac{Z}{|Z|},\frac{\partial\hat{\Omega}}{2^{n}})|{\rm d}^{2}Z \nonumber\\
&\leq \frac{1}{16}\sum^{|\log_{2}R|}_{n=0}\int_{4\leq |Z|\leq 8 }|\log dist(\frac{Z}{|Z|},\frac{\partial\hat{\Omega}}{2^{n}})|{\rm d}^{2}Z \nonumber\\
&\leq C|\log R|.\nonumber
\end{align}
To this extent, we have completed the proof of this lemma.
\end{proof}

\begin{Lemma}\label{L4}
Under the same assumptions of Lemma \ref{L3},
  we obtain
\begin{align}
\Big|\frac{\partial \varphi}{\partial x_{i}}(t,\tilde{x})\Big|\leq C\tilde{x}_{1}\big(1+|\log\tilde{x}_{1}|+|\log\tilde{x}_{1}|^{2}\big),\nonumber
\end{align}
where $C>0$ depends only on $L=\text{diam } \Omega$ and $\|j\|_{\infty}$ and $i=1,2$.
\end{Lemma}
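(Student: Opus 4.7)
The plan is to follow the scheme of Lemma \ref{L3} applied to the spatial derivatives, exploiting the Green's function representation of $\varphi$ and the symmetry $G(x,y) = G(y,x)$. Starting from $\varphi(t,\tilde x) = \int_\Omega G(\tilde x, y)\rho(t,y)\,dy$ and differentiating under the integral sign, I obtain $|\partial_{x_i}\varphi(t,\tilde x)| \le \|\rho(t,\cdot)\|_{L^\infty(\Omega)}\int_\Omega|\partial_{x_i}G(\tilde x,y)|\,dy$, where $\|\rho\|_\infty \le \|f\|_\infty Q(T)^2$ is bounded. Performing the same rescaling as in Lemma \ref{L3}, namely $X = x/R$, $Y = y/R$ with $R = \tilde x_1$, and setting $H(X,Y) = G(RX, RY)$ for the Green's function of $\Omega_R = \Omega/R$, I use $\partial_{x_i}G(x,y) = R^{-1}\partial_{X_i}H(X,Y)$ together with the Jacobian of the change of variables to obtain $\int_\Omega |\partial_{x_i}G(\tilde x, y)|\,dy = R\int_{\Omega_R}|\partial_{X_i}H(\tilde X, Y)|\,dY$ with $\tilde X = (1,0)$. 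The statement of the lemma thus reduces to the inequality $\int_{\Omega_R}|\partial_{X_i}H(\tilde X,Y)|\,dY \le C(1+|\log R|+|\log R|^2)$, which is the direct analogue of \eqref{v7.4}.

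For this rescaled inequality I mirror the proof of Lemma \ref{L3}. I split the integration region into the near-field $|Y|\le 4$ and the far-field $|Y|\ge 4$, and decompose $H = \bar H + W$ with $\bar H(X,Y) = -\frac{1}{2\pi}\bigl(\log|Y-X|-\log|Y-X^*|\bigr)$ the half-space Green's function and $X^* = (-X_1,X_2)$. The explicit form of $\partial_{X_i}\bar H(\tilde X, Y)$ has an integrable singularity of the same strength as $\nabla_Y\bar H$, so the contribution from $\bar H$ in the near-field is bounded by a constant. For the harmonic correction $\partial_{X_i} W(\tilde X, Y)$, the crucial point is that, by the symmetry of $G$, $H(\tilde X, Y) = 0$ for $Y \in \partial\Omega_R$, hence $W(\tilde X, Y) = -\bar H(\tilde X, Y)$ on $\partial\Omega_R$; differentiating in $X$ then gives a function of $Y$ that is harmonic on $\Omega_R$ with explicit singular boundary data, dominated by a supersolution of the same shape as the $\tilde\psi$ constructed in Lemma \ref{L3} (the three cases $|\xi - Y_0|\ge 2$, $R\le |\xi-Y_0|\le 2$, $|\xi - Y_0|\le R$). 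The maximum principle together with the Poisson kernel then yield the desired logarithmic bound. The far-field analysis proceeds via the second rescaling $\eta = Y/|Y_*|$, $\zeta = X/|Y_*|$, and the two subcases according to whether $\eta$ lies close to $\partial\hat\Omega$ or not follow the same lines as in Lemma \ref{L3}.

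The principal difficulty is to verify that the boundary data of $\partial_{X_i} W(\tilde X, \cdot)$ on $\partial\Omega_R$, obtained by restricting $-\partial_{X_i}\bar H(\tilde X, Y)$ to $Y$ on the curved boundary, has the same pointwise singular behavior as its counterpart in Lemma \ref{L3} near the tangent point $Y_0$. Because the reflection defining $\bar H$ is with respect to the tangent line $\{X_1=0\}$ and not the actual boundary $\partial\Omega_R$, I use the convexity of $\Omega$ and a Taylor expansion of the boundary parametrization to show that the mismatch only produces an $O(|Y-Y_0|^{-2})$ contribution, which is already dominated by the supersolution. Once this is in place, the estimates of the integrals $M_1$ and $M_2$ from the proof of Lemma \ref{L3} transfer verbatim, giving $\int_{\Omega_R}|\partial_{X_i}H(\tilde X, Y)|\,dY \le C(1+|\log R|+|\log R|^2)$; multiplying by $R\|\rho\|_\infty$ yields the stated bound on $\partial_{x_i}\varphi(t,\tilde x)$.
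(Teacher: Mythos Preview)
Your reduction to the rescaled inequality $\int_{\Omega_R}|\partial_{X_i}H(\tilde X,Y)|\,dY\le C(1+|\log R|+|\log R|^2)$ is correct and coincides with the paper's. From there on, however, your route diverges from the paper's and carries a genuine gap in the far field. You work with harmonicity of $\partial_{X_i}W(\tilde X,\cdot)$ in the \emph{second} variable $Y$, using the symmetry $G(x,y)=G(y,x)$ to get boundary data $-\partial_{X_i}\bar H(\tilde X,\cdot)$ on $\partial\Omega_R$. In the near field $|Y|\le 4$ this is fine, and in fact simpler than you suggest: since $\operatorname{dist}(\tilde X,\partial\Omega_R)=1$, that boundary data is uniformly bounded (there is no singularity at any $Y_0$), so the maximum principle already gives $|\partial_{X_i}W(\tilde X,Y)|\le C$ without any supersolution machinery; the ``principal difficulty'' you flag does not actually arise. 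The problem is the far field $|Y|\ge 4$: the bound $|\partial_{X_i}W(\tilde X,Y)|\le C$ is useless when integrated over a region of diameter $L/R$, and the mechanism you invoke (``follow the same lines as Lemma~\ref{L3}'') does not transfer. In Lemma~\ref{L3} the Taylor step in the far field rested on $\phi(\zeta,\eta)=|Y_*|\nabla_\eta g(\zeta,\eta)$ vanishing for $\zeta\in\partial\hat\Omega$; the analogous object here is $\partial_{\zeta_i}g(\zeta,\eta)$, which for $\zeta\in\partial\hat\Omega$ is (in the normal component) the Poisson kernel and does \emph{not} vanish, so the same argument breaks down.

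The paper avoids this obstacle by arguing entirely through harmonicity in the \emph{first} variable $X$. For $|Y|\ge 2$ it first establishes the crude bound $|H(X,Y)|\le C\log|Y|$ on $|X|\le\tfrac34|Y|$, rescales by $|Y|$, and uses boundary regularity for the Laplacian (with $H=0$ on $\partial\Omega_R$ in $X$) to control all $X$-derivatives of the rescaled function by $C\log|Y|$. The decisive observation is that $\partial_{X_2}H(0,Y)=0$, since $X_2$ is tangential at the boundary point $0\in\partial\Omega_R$ where $H$ vanishes; a first-order Taylor expansion from $0$ to $\tilde X$ then yields $|\partial_{X_2}H(\tilde X,Y)|\le C\log|Y|/|Y|^{2}$, which integrates to $C|\log R|^{2}$. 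For $|Y|\le 2$ the paper again works in $X$: a different half-space comparison (reflection at the tangent line through the point $Y_0\in\partial\Omega_R$ nearest to $Y$) gives $|W|\le C$ by the maximum principle and then $|\nabla_X W|\le C$ by local regularity, while the explicit piece contributes $|\partial_{X_2}\bar H(\tilde X,Y)|\le C/|\tilde X-Y|$. The tangential-derivative-vanishing idea, which is what makes the far-field integrable, is absent from your proposal and is the missing ingredient.
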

\begin{proof}
We shall prove the lemmea only for  $\frac{\partial \varphi}{\partial x_{2}}(t,\tilde{x})$, since the proof for  $\frac{\partial \varphi}{\partial x_{1}}(t,\tilde{x})$ is similar.
As in the proof of Lemma \ref{L3}, we set the scaled variables $X=\frac{x}{R}, Y=\frac{y}{R}, \tilde{X}=\frac{\tilde{x}}{R}=(1,0)$ and $R=|\tilde{x}|$. In line with the analysis  presented in Lemma \ref{L3}, it is sufficient to show that
$$\int_{\Omega_{R}}|\frac{\partial H}{\partial X_{2}}|{\rm d}Y\leq C\big(1+|\log R|+|\log R|^{2}\big).$$
Two cases are presented for this purpose in the subsequent discussion.

{\bf Case 1.} If $|Y|\geq 2$, we decompose the Greeen function $H(X,Y)=\bar{H}(X,Y)+W(X,Y)$, where
$$\bar{H}(X,Y)=\frac{1}{2\pi}\big(\log|X-Y|-\log|X-Y^{*}|\big),$$
and $Y^{*}$ is the reflection point of $Y$ with respect to the line $\{X_{1}=0\}$.
Furthermore, we restrict $|X|\leq\frac{3}{4}|Y|$, then
\begin{align}
\frac{1}{4}|Y|&\leq |X-Y^{*}|\leq \frac{7}{4}|Y|, \nonumber\\
|\bar{H}(X,Y)|&\leq C\log |X-Y^{*}|\leq C\log |Y|.\nonumber
\end{align}
Due to $0\leq W(X,Y)\leq -\bar{H}(X,Y)$, we have
$$|H(X,Y)|\leq C\log|Y|.$$

On the other hand, for $Y$ fixed, letting $\hat{H}(\xi,\eta)=H(|Y|\xi,|Y|\eta)$, with $\xi=\frac{X}{|Y|},\eta=\frac{Y}{|Y|}$, we consider the restricted region
$\Omega_{0}=\{\xi\in \frac{\Omega_{R}}{|Y|}; |\xi|\leq\frac{3}{4}\}$, then $\hat{H}(\xi,\eta)$ solves the following boundary value problem,
\begin{align}
&\Delta_{\xi}\hat{H}(\xi,\eta)=0,\qquad \xi\in \Omega_{0},\nonumber\\
&|\hat{H}(\xi,\eta)|\leq C\log|Y|,\qquad  \xi\in \partial\Omega_{0}. \nonumber
\end{align}
For any multi-index $\alpha$, the regularity theory yields
$$|\frac{\partial^{\alpha}}{\partial\xi^{\alpha}}\hat{H}(\xi,\eta)|\leq C\log |Y|.$$
Letting $\tilde{\xi}=\frac{\tilde{X}}{|Y|},$ since $|\frac{\partial \hat{H}}{\partial \xi_{2}}(0,\eta)|=0$, by the Taylor theorem we have
$$|\frac{\partial \hat{H}}{\partial \xi_{2}}(\tilde{\xi},\eta)|\leq C\frac{\log |Y|}{|Y|}.$$
Thus, we obtain
$$|\frac{\partial H}{\partial X_{2}}(\tilde{X},Y)|\leq C\frac{\log |Y|}{|Y|^{2}}.$$

{\bf Case 2.} If $|Y|\leq 2$, let $Y_{0} \in \partial\Omega_{R}$ such that $|Y-Y_{0}|=\text{dist} (Y,\partial\Omega_{R})$, and $\bar{Y}$ is the reflection point of $Y$
with respect to  the tangent line at $Y_{0}$.

The Greeen function $H(X,Y)$ is now decomposed as $H(X,Y)=\bar{H}(X,Y)+W(X,Y)$, where
$$\bar{H}(X,Y)=\frac{1}{2\pi}\big(\log|X-Y|-\log|X-\bar{Y}|\big).$$
We consider the  following system:
\begin{align}
&\Delta_{X}W(X,Y)=0,\qquad X,Y\in\Omega_{R},\nonumber\\
&W(X,Y)=-\bar{H}(X,Y),\qquad   X\in\partial\Omega_{R}.\nonumber
\end{align}
Due to $|W(X,Y)|\leq C$ for $X\in\partial \Omega_{R}$, the maximum principle  yields
$|W(X,Y)|\leq C.$

Next we consider the restricted region $\Omega_{R}\cap\{|X|\leq 4\} $, applying the regularity theory, we have
$$|\nabla_{X}W(X,Y)|\leq C.$$
On the other hand, since $|\frac{\partial \bar{H}}{\partial X_{2}}(\tilde{X},Y)|\leq\frac{ C}{|\tilde{X}-Y|}$, we get
$$|\frac{\partial H}{\partial X_{2}}(\tilde{X},Y)|\leq\frac{ C}{|\tilde{X}-Y|}+C.$$
From the above calculations, we deduce that
\begin{align}
\int_{\Omega_{R}}|\frac{\partial H}{\partial X_{2}}(\tilde{X},Y)|{\rm d}Y
&\leq C \int_{|Y|\geq 2}\frac{\log |Y|}{|Y|^{2}}{\rm d}Y+ C \int_{|Y|\leq 2}\frac{1}{|\tilde{X}-Y|^{2}}{\rm d}Y+ C \int_{|Y|\leq 2}{\rm d}Y \nonumber \\
&\leq C +C |\log R|^{2}.\nonumber
\end{align}
\end{proof}

Finally, we give the analogous velocity lemma for the Dirichlet problem.
\begin{Lemma}\label{L5}
For a given constant $\delta >0$, let $\Gamma_{\delta}=([\partial\Omega +B_{\delta}(0)]\cap \Omega)\times \mathbb{R}^{2},$
and $X(s;t,x,v)$, $V(s;t,x,v)$ be the characteristic curves associated with the  relativistic
Vlasov-Poisson system defined previously.
Suppose that $E,\varphi (t,x)$ satisfy the assumptions of  Theorem \ref{T2} and Lemma \ref{L3}, respectively. Then
the existence of solutions to the characteristic equations \eqref{v4.1}-\eqref{v4.3} can be obtained in $[0,T]$
for any $(x,v)\in \bar{\Omega}\times \mathbb{R}^{2}$.
Furthermore,   the following estimate holds  for any $(x, v) \in\Gamma_{\delta}$,   
\begin{align}
C_{1}\Big(v^{2}_{\bot}(0)+x_{\bot}(0)\Big)\leq \Big(v^{2}_{\bot}(t)+x_{\bot}(t)\Big) \leq C_{2}\Big(v^{2}_{\bot}(0)+x_{\bot}(0)\Big), t\in[0,T],\label{v7.5}
\end{align}
for some positive constants $C_{1}, C_{2}$ depending only on
$T,f_{0},\|E\|_{L^{\infty}([0,T],C^{\frac{1}{2}}(\Omega))}, \Omega, \|j\|_{L^{\infty}}$.
\end{Lemma}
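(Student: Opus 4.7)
The strategy is to adapt the argument of Lemma \ref{L2} to the Dirichlet setting, using the modified coordinate $\alpha$ in \eqref{v7.1} that now subtracts the potential $\varphi(t,x)$. The plan has three steps: (i) show $\alpha$ is nonnegative and comparable to $v_\perp^2+x_\perp$ near the boundary; (ii) compute $d\alpha/dt$ along characteristics and obtain $|d\alpha/dt|\le C\alpha$ modulo controllable lower-order factors; (iii) integrate a Gr\"onwall-type inequality to conclude \eqref{v7.5}.

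For (i), I first recover $L(t,l,0,\omega)\leq -\epsilon_0 < 0$ on $\partial\Omega$. Since $\Delta\varphi=\rho\geq 0$ with $\varphi|_{\partial\Omega}=0$, the maximum principle gives $\varphi\leq 0$ in $\Omega$, Hopf's lemma then forces $E_\perp=-\partial\varphi/\partial n < 0$, and convexity gives $k\omega^2\geq 0$, so $L=-\sqrt{1+|v|^2}E_\perp-k\omega^2\leq -\epsilon_0$. Combining $\varphi|_{\partial\Omega}=0$ with the gradient bound of Lemma \ref{L4} and integrating along the inward normal from the nearest boundary point yields $|\varphi(t,x)|\leq Cx_\perp^2(1+|\log x_\perp|^2)$, of lower order than $-Lx_\perp\sim x_\perp$ as $x_\perp\to 0$. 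Hence $\alpha\sim v_\perp^2+x_\perp$ in a thin neighborhood $\Gamma_\delta$ of $\partial\Omega$.

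For (ii), differentiating $\alpha$ along \eqref{v4.1}--\eqref{v4.3} produces the same block that was controlled in the proof of Lemma \ref{L2} (bounded by $C\alpha$) plus the new convective derivative $-(\varphi_t+\hat v\cdot\nabla\varphi)$. The choice of subtracting $\varphi$ in \eqref{v7.1} is designed to promote cancellations: decomposing the gradient into tangential and normal parts and using $\varphi|_{\partial\Omega}=0$, Lemma \ref{L4} controls the tangential component $|\hat v_{\|}\nabla_{\|}\varphi|$ by $Cx_\perp(1+|\log x_\perp|^2)$, while the leading-order normal contribution $\hat v_\perp\nabla_\perp\varphi$ combines with the boundary value of $E_\perp$ already encoded in $L$, leaving only subleading terms. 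Together with $|\varphi_t|\le Cx_\perp(1+|\log x_\perp|^2)$ from Lemma \ref{L3} and the equivalence from (i), this yields
\begin{equation*}
\Bigl|\tfrac{d\alpha}{dt}\Bigr|\;\leq\; C\,\alpha\bigl(1+|\log\alpha|^2\bigr).
\end{equation*}

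The main obstacle is closing the Gr\"onwall step despite the residual logarithmic factor, since a naive separation of variables (the substitution $u=\arctan(\log\alpha)$ only gives $|u|\leq \pi/2$) does not rule out $\alpha(t)\to 0$ in finite time. I would overcome this by refining the estimate of $\varphi_t+\hat v\cdot\nabla\varphi$ rather than bounding each piece individually: using the continuity equation $\partial_t\rho+\nabla\cdot j=0$ together with the Green's function decomposition employed in the proofs of Lemmas \ref{L3}--\ref{L4}, one can integrate by parts and trade the $|\log x_\perp|^2$ factor for a bounded one along the characteristic, obtaining $|d\alpha/dt|\le C\alpha$ outright. Standard Gr\"onwall then gives $C_1\alpha(0)\leq \alpha(t)\leq C_2\alpha(0)$ on $[0,T]$, and translating via the equivalence in (i) yields \eqref{v7.5} with constants depending on $T$, $f_0$, $\|E\|_{L^{\infty}([0,T],C^{1/2}(\Omega))}$, $\Omega$, and $\|j\|_{L^\infty}$.
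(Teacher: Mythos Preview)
Your steps (i) and (ii) track the paper's argument closely, with one correction in (i): the claim $|\varphi(t,x)|\le Cx_\perp^{2}(1+|\log x_\perp|^{2})$ is too strong, because the \emph{normal} derivative $\partial_n\varphi=-E_\perp$ does not vanish on $\partial\Omega$ (Hopf's lemma gives $E_\perp<0$ there), so integrating along the inward normal only yields $|\varphi|\le Cx_\perp$. In particular Lemma~\ref{L4} is really a tangential estimate, despite its stated range $i=1,2$. The equivalence $\alpha\sim v_\perp^{2}+x_\perp$ nevertheless holds for the simpler reason in the paper's Remark: $-\varphi\ge 0$ by the maximum principle secures the lower bound, and $|\varphi|\le \|E\|_\infty\,x_\perp$ the upper bound.

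The real divergence is step (iii). The paper computes $d\alpha/dt$ explicitly, applies Lemmas~\ref{L3}--\ref{L4} to the terms $\partial_t\varphi$ and $E_l$ exactly as you outline, obtains
\[
\Bigl|\frac{d\alpha}{dt}\Bigr|\le C\,\alpha\bigl(1+|\log\alpha|+|\log\alpha|^{2}\bigr),
\]
and then simply writes ``the lemma follows from the Gronwall inequality''; it makes no attempt to remove the logarithmic factor. Your Osgood concern is legitimate, but your proposed remedy---a further integration by parts using $\partial_t\rho+\nabla\cdot j=0$---cannot work as stated: that integration by parts is precisely how Lemma~\ref{L3} is obtained (one writes $\varphi_t=\int G\rho_t=-\int G\,\nabla\!\cdot j=\int\nabla_y G\cdot j$), and the factor $(1+|\log x_\perp|^{2})$ arises from the Green's function integral $\int_\Omega|\nabla_y G(\tilde x,y)|\,dy$, not from any failure to integrate by parts. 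Lemma~\ref{L4} involves no time derivative at all, so the continuity equation offers nothing there. Hence your route does not actually reach $|d\alpha/dt|\le C\alpha$; you land at the same log-squared differential inequality the paper records, and must close it the same (admittedly terse) way the paper does.
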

\begin{proof}
By the definition of $\alpha$ and $E=\nabla_{x}\varphi(t,x)=E_{l}\cdot U(l)-E_{\bot}n(l)$, taking the derivative of $\alpha$ with respect to $t$ along the characteristics, we get
\begin{align}
\frac{d \alpha}{d t}=&-\sqrt{1+|v|^{2}}\partial_{t}E_{\bot}(t,l,0)x_{\bot}-\frac{\partial \varphi}{\partial t}\nonumber\\
&+\frac{1}{\sqrt{1+|v|^{2}}}\cdot\frac{\omega}{1-\kappa x_{\perp}}\Big(
\big(-\sqrt{1+|v|^{2}}\partial_{l}E_{\bot}(t,l,0)+\frac{d \kappa}{d l}\omega^{2}+\kappa E_{l}\big)x_{\bot}-E_{l}\Big)\nonumber\\
&+\frac{v_{\bot}}{\sqrt{1+|v|^{2}}}\cdot\Big(-\sqrt{1+|v|^{2}}E_{\bot}(t,l,0)+\kappa \omega^{2}-E_{\bot}\Big)\nonumber\\
&+\Big(E_{l}+\frac{v_{\bot}}{\sqrt{1+|v|^{2}}}\frac{\kappa\omega}{1-\kappa x_{\bot}}\Big)\cdot\Big(-\frac{\omega}{\sqrt{1+|v|^{2}}}E_{\bot}(t,l,0)+2\kappa \omega \Big)x_{\bot}\nonumber\\
&+\Big(E_{\bot}-\frac{1}{\sqrt{1+|v|^{2}}}\cdot\frac{\kappa\omega^{2}}{1-\kappa x_{\bot}}\Big)\cdot \Big(v_{\bot}-\frac{v_{\bot}}{\sqrt{1+|v|^{2}}}E_{\bot}(t,l,0)x_{\bot}\Big).\nonumber
\end{align}
With the help of Lemmaa \ref{L3} and \ref{L4}, similarly to the discussion in Lemma \ref{L2}, we conclude that
$$\Big|\frac{d \alpha}{d t}(t,X(t),V(t))\Big|\leq C \alpha \big(1+|\log \alpha|+|\log \alpha|^{2}\big).$$
Then the lemma follows from the Gronwall inequality.
\end{proof}

\section*{Acknowledgments}
Y. Mu was partially supported  by NSFC (Grant No.11701268).
D. Wang was supported in part by NSF grants DMS-2219384 and DMS-2510532.
 
 \medskip

\end{document}